\theoremstyle{plain}
\newtheorem{teorema}{Theorem}[section]
\newtheorem{proposizione}[teorema]{Proposition}
\newtheorem{lemma}[teorema]{Lemma}
\newtheorem{corollario}[teorema]{Corollary}
\newtheorem*{theorem*}{Theorem}
\theoremstyle{definition}
\theoremstyle{definition}
\newtheorem{definizione}[teorema]{Definition}
\theoremstyle{remark}
\newtheorem{osservazione}[teorema]{Remark}
\newcommand{\Mass}{\mathbb{M}}
\newcommand{\R}{\mathbb{R}}
\newcommand{\N}{\mathbb{N}}
\newcommand{\Haus}{\mathscr{H}}
\newcommand{\D}{\mathscr{D}}
\newcommand{\supp}{\mathrm{supp}}
\newcommand{\eps}{\varepsilon}
\newcommand{\dV}{d_V\kern-1pt}
\newcommand{\dW}{d_W\kern-1pt}
\newcommand{\trait}[3]{\vrule width #1ex height #2ex depth #3ex}
\newcommand{\trace}{\mathchoice%
  {\mathbin{\trait{.12}{1.2}{.03}\trait{.8}{0.09}{0.03}}}
  {\mathbin{\trait{.12}{1.2}{.03}\trait{.8}{0.09}{0.03}}}
  {\mathbin{\hskip.15ex\trait{.09}{.84}{0.02}\trait{.56}{.07}{.02}}\hskip.15ex}
  {\mathbin{\trait{.07}{.6}{.01}\trait{.4}{.06}{.01}}}}
\newcounter{const}
\newcounter{eps}
\newcommand{\vertiii}[1]{{\left\vert\kern-0.25ex\left\vert\kern-0.25ex\left\vert #1 
    \right\vert\kern-0.25ex\right\vert\kern-0.25ex\right\vert}}
\newcommand{\Rect}{\mathscr{R}}
\newcommand{\Regb}{\text{Reg}_{\text{b}}}
\newcommand{\Singb}{\text{Sing}_{\text{b}}}
\newcommand{\Regi}{\text{Reg}_{\text{i}}}
\newcommand{\cB}{\mathscr{B}}
\newcommand{\cU}{\mathscr{U}}
\newcommand{\cV}{\mathscr{V}}
\newcommand{\Flat}{\mathbb{F}}
\newcommand{\m}{\mathbf{m}}
\newcommand{\Phii}{\mathbf{\Phi}}
\newcommand{\AMC}{\textbf{AMC}}
\newcommand\res{\mathop{\hbox{\vrule height 7pt width .3pt depth 0pt
\vrule height .3pt width 5pt depth 0pt}}\nolimits}
\title{\normalfont\spacedallcaps{Generic uniqueness for the Plateau problem}} 
\author{\small\spacedallcaps{Gianmarco Caldini \textsuperscript{*}, Andrea Marchese \textsuperscript{*},}      \\ \small\spacedallcaps{Andrea Merlo\textsuperscript{**} and Simone Steinbr\"uchel \textsuperscript{***}}}
\date{} 
\numberwithin{equation}{section}
\begin{document}

\renewcommand{\sectionmark}[1]{\markright{\spacedlowsmallcaps{#1}}} 
\lehead{\mbox{\llap{\small\thepage\kern1em\color{halfgray} \vline}\color{halfgray}\hspace{0.5em}\rightmark\hfil}} 
\pagestyle{scrheadings}
\maketitle 
\setcounter{tocdepth}{2}

{\let\thefootnote\relax\footnotetext{* \textit{Università di Trento, Dipartimento di Matematica, Via Sommarive, 14, 38123 Povo (TN), Italy, \\e-mail:  \href{gianmarco.caldini@unitn.it, andrea.marchese@unitn.it}{gianmarco.caldini@unitn.it{\color{black},} andrea.marchese@unitn.it}.}}}
{\let\thefootnote\relax\footnotetext{** \textit{Departamento de Matem\'aticas, Universidad del Pa\' is Vasco, Barrio Sarriena s/n 48940 Leioa, Spain,\\e-mail: \href{andrea.merlo@ehu.eus}{andrea.merlo@ehu.eus}.}}}

{\let\thefootnote\relax\footnotetext{*** \textit{Institut für Mathematik, Universtät Leipzig, Augustusplatz 10, 04109 Leipzig, Germany,\\e-mail: \href{simone.steinbruechel@math.uni-leipzig.de}{simone.steinbruechel@math.uni-leipzig.de}.}}}

{\rightskip 1 cm
\leftskip 1 cm
\parindent 0 pt
\footnotesize
\textsc{Abstract}.
Given a complete Riemannian manifold $\mathcal{M}\subset\mathbb{R}^d$ which is a Lipschitz neighbourhood retract of dimension $m+n$, of class $C^{h,\beta}$ and an oriented, closed submanifold $\Gamma \subset \mathcal M$ of dimension $m-1$, which is a boundary in integral homology, we construct a complete metric space $\mathcal{B}$ of $C^{h,\alpha}$-perturbations of $\Gamma$ inside $\mathcal{M}$, with $\alpha<\beta$, enjoying the following property. For the typical element $b\in\mathcal B$, in the sense of Baire categories, there exists a unique $m$-dimensional integral current in $\mathcal{M}$ which solves the corresponding Plateau problem and it has multiplicity one. %Moreover we prove that, in a complete metric space of integral currents without boundary in $\mathbb{R}^{m+n}$, metrized by the flat norm, the typical boundary admits a unique solution to the Plateau problem.

%\bigskip

%\textsc{Résumé}.
% Étant donné une variété riemannienne complète $\mathcal{M}\subset\mathbb{R}^d$ qui est un retract de voisinage Lipschitz de dimension $m+n$, de classe $C^{h,\beta}$, et une sous-variété fermée orientée $\Gamma \subset \mathcal{M}$ de dimension $m-1$, qui est un bord en homologie intégrale, nous construisons un espace métrique complet $\mathcal{B}$ de perturbations $C^{h,\alpha}$ de $\Gamma$ à l'intérieur de $\mathcal{M}$, avec $\alpha<\beta$, ayant la propriété suivante. Pour l'élément typique $b \in \mathcal B$, au sens des catégories de Baire, il existe un courant intégral unique de dimension m dans $\mathcal{M}$ qui résout le problème de Plateau correspondant et qui a une multiplicité d'un.
%On considère une paire $(\mathcal{M},\Gamma)$, où $\mathcal{M}\subset\mathbb{R}^d$ est une variété complète  de dimension $m+n$ et classe $C^{h,\beta}$, qui est un rétract de voisinage Lipschitz, et $\Gamma \subset \mathcal M$ est une sous-variété fermé et orientée, qui soit un bord dans l'homologie entière. On construit un espace metrique complet  $\mathcal{B}$ de perturbations $C^{h,\alpha}$ de $\Gamma$ dans $\mathcal{M}$, où $\alpha<\beta$, qui satisfait la propriété suivante: pour tout $b\in\mathcal B$ générique au sense de Baire il existe une unique courant entière dans $\mathcal{M}$, $m$-dimensionnelle et de multiplicité un,  qui est une solution du problème de Plateau correspondant.

\paragraph*{MSC (2020):} 49Q05, 49Q15, 49Q20.

\par
}
{
\hypersetup{linkcolor=black, linktoc=all}
  \tableofcontents
}

\section{Introduction}\label{s:intro}
In the following let $n,m \ge 1$, $\beta \in [0,1]$ and let $\mathcal{M}\subset\R^d$ be a complete Riemannian manifold (without boundary), which is a Lipschitz neighbourhood retract\footnote{This assumption is satisfied for instance if $\mathcal{M}$ is a closed Riemannian manifold or if $\mathcal{M}=\R^{m+n}$.} of dimension $m+n$, of class $C^{h,\beta}$, with $h+\beta>3$. For every $k=0,\dots,m+n$, we denote by $\mathscr{D}_k(\mathcal{M})$ the set of $k$-dimensional currents (or $k$-currents) with support in $\mathcal{M}$ and by $\mathscr{I}_k(\mathcal{M})$ the subgroup of $k$-dimensional integral currents. We refer to Section \ref{s:notation} for the relevant definitions. We denote by $\mathbf{AMC}(b)$ the set of area-minimizing integral currents in $\mathcal{M}$ with boundary $b$, namely
$$\AMC(b):=\{T\in\mathscr{I}_m(\mathcal{M}):\partial T=b,\, \Mass(T)\leq\Mass(S)\mbox{ for every $S\in\mathscr{I}_m(\mathcal{M})$ with $\partial S=b$}\}.$$
We denote the set of $(m-1)$-dimensional boundaries in $\mathcal{M}$ by $$\mathscr{B}_{m-1}(\mathcal{M}):=\{b \in \mathscr{D}_{m-1}(\mathcal{M}) : b=\partial T\mbox{ for some }T\in\mathscr{D}_m(\mathcal{M})\}.$$

Let $\Gamma \subset \mathcal{M}$ be an oriented, closed (\textit{i.e.} compact and without boundary) submanifold of dimension $m-1$ and of class $C^{\ell,\alpha}$, with $3<\ell+\alpha<h+\beta$. Let $b_0:=\llbracket \Gamma \rrbracket$ be the associated  multiplicity-one current and assume that $b_0\in\mathscr{B}_{m-1}(\mathcal{M})$.
For every $P \in \Gamma$ there exists a connected, open set $U\subset \R^{m+n}$, a diffeomorphism $\Phii:U\to\Phii(U)\subseteq\mathcal{M}$ of class $C^{h,\beta}$ such that $P\in\Phii(U)$, a relatively open, connected, bounded set $\Omega\subset\R^{m-1}=\langle e_1,\dots,e_{m-1}\rangle$, and a function $f : \Omega \rightarrow \R^{n+1}$ of class $C^{\ell,\alpha}$ such that $$gr(f):=\{(x,y)\in\Omega\times\langle e_m,\dots,e_{m+n}\rangle: y=f(x)\}$$
satisfies $gr(f)\subset U$ and such that
\begin{equation}\label{e:bizzero}
\Gamma\cap\Phii(U)=\Phii(gr(f)).
\end{equation} 
%OR $\Gamma \cap U = \text{gr}(f) , $\text{gr}(f)$ denotes the graph of the function $f$
Observe that since $\Omega$ is connected, then \eqref{e:bizzero} implies that $\Gamma\cap \Phii(U)$ is also connected.\\

Given a connected open set $\Omega'$ compactly contained in $\Omega$ and $\varepsilon>0$, we let 
\begin{equation}\label{e:deficseps}
    X_{\varepsilon}(P):=\{u\in C^{\ell,\alpha}(\Omega, \R^{n+1}): f-u\equiv 0 \mbox { on } \Omega\setminus\Omega', \lVert f-u\rVert_{C^{\ell,\alpha}}\leq \varepsilon\}.
\end{equation}
By \eqref{e:bizzero} there exists $\varepsilon>0$ such that
\begin{equation}\label{e:sceltaeps}
    \text{$gr(u)\subseteq U$ for every $u\in X_\varepsilon(P)$.}
\end{equation}
%We note that if $\varepsilon$ is sufficiently small, for every $u\in X_\varepsilon$, $gr(u)\subset U$ and therefore 
%$$\partial (\Phi(u))=\par$$
We endow $X_\varepsilon(P)$ with the norm $\lVert\cdot\rVert_{C^{\ell,\alpha}}$, which makes it a complete metric space, see Lemma \ref{lemmachiusura}.\\

For $i=1,\dots,N$, we select one point $p_i$ on each connected component of $\Gamma$ and we assume that the definition of $U_i,\Phii_i, \Omega_i$, $f_i$ and $\varepsilon_i$ as in \eqref{e:sceltaeps} is understood. We assume that $\Phii_i(U_i)$ are disjoint and we denote 
\begin{equation}\label{e:defeta}
\eta:=\min\{1;\min_{i=1,\dots,N}\varepsilon_i\}.    
\end{equation}
Further restrictions on $\eta$ will be specified in Lemma \ref{l:denseimpliesresidual}. We denote by ${\mathbf{X}}_\eta$ the product space
\begin{equation}\label{e:defX}
{\mathbf{X}}_\eta:=\prod_{i=1}^N X_{\eta}(p_i),
\end{equation}
endowed with the 1-product distance, namely the distance induced by the norm
\begin{equation}\label{e:proddist}
\|(u_1,\dots,u_N)\|:=\sum_{i=1}^N\|u_i\|_{C^{\ell,\alpha}(\Omega_i)}.
\end{equation}
We define a map $\Psi:{\mathbf{X}}_\eta\to\mathscr{B}_{m-1}(\mathcal{M})$ as follows 
\begin{equation}\label{e:defPSI}
    \Psi(u_1,\dots,u_N):= \sum_{i=1}^N\llbracket\Phii_i(gr(u_i)) \rrbracket + b_0\res (\mathcal{M}\setminus \bigcup_{i=1}^N\Phii_i(U_i)).
\end{equation}
We observe that $\Psi$ is injective and  $\Psi(u_1,\dots,u_N)$ and $b_0$ are in the same homology class for every $(u_1,\dots,u_N)\in {\mathbf{X}}_\eta$, see Lemma \ref{l:cobordanti}.
We define the space of boundaries associated to ${\mathbf{X}}_\eta$ as
\begin{equation}\label{e:defB}
\mathcal{B}_\eta:=\Psi({\mathbf{X}}_\eta).    
\end{equation}
We naturally endow $\mathcal{B}_\eta$ with the distance $d$ induced by the map $\Psi$. More precisely, for every $b\in {\mathbf{X}}_\eta$ we denote \begin{equation}\label{e:inverse}
    (u_1(b),\dots, u_N(b)):=\Psi^{-1}(b)
\end{equation} and we define 
\begin{equation}\label{e:distance}
d(b,\bar b):=\sum_{i=1}^N\|u_i(b)-u_i(\bar b)\|_{C^{\ell,\alpha}(\Omega_i)}.    
\end{equation}
Notice that $(\mathcal{B}_\eta, d)$ is also a complete metric space, because $\Psi$ is by definition an isometry,  see Lemma \ref{lemmachiusura}. Roughly speaking, the space $\mathcal{B}_\eta$ consists of $C^{\ell,\alpha}$-perturbations of the boundary $\Gamma$ that allow us to deform each connected component of $\Gamma$, locally around a point. Observe that \eqref{e:sceltaeps} implies that every boundary in $\mathcal{B}_\eta$ is the multiplicity-one current associated to an oriented regular submanifold.
We are ready to state the main results of this paper which we prove in Section \ref{s:main_proofs}.

\begin{teorema}\label{t:main}  
For the typical boundary $b \in \mathcal{B}_\eta$, any area-minimizing integral current $T$ with $\partial T=b$ has multiplicity one $\|T\|$-a.e.
\end{teorema}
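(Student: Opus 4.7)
My plan is to realize the set of boundaries satisfying the conclusion of the theorem as a countable intersection of residual subsets of the complete metric space $(\mathcal{B}_\eta,d)$ and then conclude by Baire's theorem. Since the singular set of any area-minimizing integral current $T$ in $\mathcal{M}$ has $\|T\|$-measure zero (by Almgren's big regularity theorem, respectively Federer's in codimension one), the statement ``$T$ has multiplicity one $\|T\|$-a.e.'' is equivalent to the quantitative condition $\|T\|(\{x:\Theta(x,T)\ge 2\})=0$. For each $k\in\N$ I would therefore define
\begin{equation*}
G_k:=\{b\in\mathcal{B}_\eta : \forall\, T\in\AMC(b),\ \|T\|(\{\Theta(\cdot,T)\ge 2\})<1/k\},
\end{equation*}
so that the conclusion of the theorem amounts to the residuality of $\bigcap_{k\in\N}G_k$, which by Baire will follow from the residuality of each $G_k$.

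The reduction step is provided by Lemma \ref{l:denseimpliesresidual}, which, as the name indicates, should reduce the residuality of $G_k$ to its density. The required semicontinuity input is the (relative) closedness of $\mathcal{B}_\eta\setminus G_k$: given $b_j\to b$ in $\mathcal{B}_\eta$ and $T_j\in\AMC(b_j)$ with $\|T_j\|(\{\Theta(\cdot,T_j)\ge 2\})\ge 1/k$, the compactness theorem for integral currents (applied to the sequence $T_j$, whose masses are equi-bounded by comparison with a fixed filling of $b_j$) together with the lower semicontinuity of mass delivers, after passing to a subsequence, a flat limit $T\in\AMC(b)$ with $\|T_j\|\wstarto\|T\|$; Allard's upper semicontinuity of the density for area-minimizers then implies that any weak-$*$ limit of $\|T_j\|\res\{\Theta(\cdot,T_j)\ge 2\}$ is dominated by $\|T\|$ and supported on the (relatively) closed set $\{\Theta(\cdot,T)\ge 2\}$, whence $\|T\|(\{\Theta(\cdot,T)\ge 2\})\ge 1/k$.

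The heart of the matter is then to prove that each $G_k$ is dense in $\mathcal{B}_\eta$. Fix $b\in\mathcal{B}_\eta\setminus G_k$ and $\delta>0$, and let $T\in\AMC(b)$ realize $\|T\|(\{\Theta(\cdot,T)\ge 2\})\ge 1/k$. I would construct $b'=\Psi(u_1(b)+\phi_1,\dots,u_N(b)+\phi_N)\in G_k$ with $\sum_i\|\phi_i\|_{C^{\ell,\alpha}(\Omega_i)}<\delta$ by choosing each $\phi_i$ so that it pushes the $i$-th component of $\Gamma$ in a direction transverse to the sheets of $T$ along $\Gamma$: heuristically, this separates candidate multi-sheets in a neighbourhood of $\Gamma$, so that any competitor $T'$ for $b'$ carrying a multiplicity-$\ge 2$ piece of comparable size could be ``split'' along that piece into two admissible multiplicity-one pieces whose total mass is strictly less than $\mathbf{M}(T')$, contradicting minimality. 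I expect the principal obstacle to be that the perturbation must defeat \emph{every} minimizer of $b'$, not merely the original $T$: a natural route is by contradiction and compactness, namely, if density were to fail, taking $\delta_j\to 0$ and corresponding $b_j'\in\mathcal{B}_\eta\setminus G_k$ with $d(b,b_j')<\delta_j$ and minimizers $T_j'$ violating the multiplicity bound, one could pass to the limit using the semicontinuity machinery above to obtain an area-minimizer in $\AMC(b)$ whose multiplicity-$\ge 2$ region survives under arbitrary $N$-parameter perturbations of $\Gamma$, a rigidity which should be ruled out by the freedom encoded in $\mathbf{X}_\eta$ together with a calibration- or unique-continuation-type argument.
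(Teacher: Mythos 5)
Your outline has a genuine gap at exactly the point you flag as ``the heart of the matter'': you never supply a mechanism that actually produces the dense set, and the compactness/unique-continuation fallback you sketch would only rule out a single limiting minimizer, not the required statement that \emph{every} minimizer of the perturbed boundary has small high-multiplicity mass. In fact the paper does not prove Theorem~\ref{t:main} via the ``dense $+$ closed $\Rightarrow$ residual'' scheme of Lemma~\ref{l:denseimpliesresidual} at all (that lemma is tailored to the non-uniqueness set $NU$ and its proof relies on the flat-norm decomposition $NU=\bigcup_\m NU_\m$, which has no analogue for your sets $G_k$). The missing idea is geometric, not a compactness argument: if $T\in\AMC(b)$ has multiplicity $\ge 2$ on a set of positive $\|T\|$-measure, then the boundary regularity theorem of De Lellis--De Philippis--Hirsch--Massaccesi \cite{Borda} forces the existence of a \emph{two-sided} regular boundary point $p$, and near such a $p$ the support of $T$ is an embedded $m$-dimensional submanifold of class $C^{h,\beta}$ (by higher regularity) that contains $\Gamma$ locally. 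Thus the obstruction to multiplicity one is precisely local containment of $\Gamma$ in a $C^{h,\beta}$ submanifold.

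The paper therefore constructs a residual set directly: Proposition~\ref{p:baire30} (proved by a Banach--Mazur game) shows that the set $\mathcal{A}_i\subset X_\eta(p_i)$ of graphs locally contained in some $C^{h,\beta}$ submanifold is of first category, so $\mathscr{R}=\prod_i(X_\eta(p_i)\setminus\mathcal{A}_i)$ is residual and $\Psi(\mathscr{R})$ is residual in $\mathcal{B}_\eta$; for any $b\in\Psi(\mathscr{R})$ and any $T\in\AMC(b)$ of multiplicity $\ge 2$ somewhere, the two-sided boundary point produced by \cite{Borda} yields a $C^{h,\beta}$ submanifold locally containing a piece of $\mathrm{gr}(u_i)$, contradicting $u_i\notin\mathcal{A}_i$. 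No density/semicontinuity argument is needed, and there is no countable intersection over $k$: the residual set is exhibited in one shot. To salvage your route you would need, at minimum, to (a) replace the appeal to Lemma~\ref{l:denseimpliesresidual} by a proof that each $\mathcal{B}_\eta\setminus G_k$ is closed (your weak-$*$ semicontinuity sketch for $\|T_j\|\res\{\Theta\ge 2\}$ is plausible but unproven near $\supp b$), and (b) supply an actual construction for density --- at which point you would in effect be reproving Proposition~\ref{p:baire30} through the back door, so it is more economical to use it directly as the paper does.
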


%We remark that in the case when $\Gamma$ is connected, i.e. when $N=1$, De Lellis, De Philippis, Hirsch and Massaccesi already obtained a stronger result in \cite{Borda}, namely that \emph{every} element of $\mathcal{B}_\eta$ satisfies the conclusion of Theorem \ref{t:main}.

In codimension $n=1$ the previous theorem has the following interesting consequence.

\begin{corollario}\label{t:codim1}  
If $n=1$, then for the typical boundary $b \in \mathcal{B}_\eta$, any area-minimizing integral current $T$ with $\partial T=b$ has density $1/2$ at every point of the support of $b$.
\end{corollario}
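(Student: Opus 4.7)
The plan is to combine Theorem~\ref{t:main} with the rigid structure of area-minimizing tangent cones at boundary points in codimension one. Fix a typical $b\in\mathcal{B}_\eta$ as in Theorem~\ref{t:main}, let $T\in\AMC(b)$, and denote by $\Gamma$ the $(m-1)$-dimensional submanifold underlying $b$. By Theorem~\ref{t:main}, we may assume that $T$ has multiplicity one $\|T\|$-a.e. Fix $p\in\supp(b)=\Gamma$; the goal is to show $\Theta(T,p)=1/2$.

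I would begin by blowing up $T$ at $p$. By the monotonicity formula up to a smooth multiplicity-one boundary, $\Theta(T,p)$ exists, and any subsequential limit $C$ of the rescalings $(\eta_{p,r_k})_{\#} T$, with $\eta_{p,r}(x):=(x-p)/r$ and $r_k\to 0$, is an area-minimizing $m$-dimensional cone in $\R^{m+1}$ with $\partial C=\llbracket V\rrbracket$, where $V:=T_p\Gamma$. The multiplicity-one property of $T$ is scale-invariant and passes to the varifold limit via area-minimality, so $\Theta(\|C\|,x)=1$ for $\|C\|$-a.e.\ $x$. By dimension reduction applied at a regular point of $V\cap\supp(C)$, the cone $C$ is translation-invariant along $V$ and splits as $C=\llbracket V\rrbracket\times C'$, where $C'$ is a $1$-dimensional area-minimizing cone in $V^{\perp}\simeq\R^2$ with $\partial C'=\pm\llbracket 0\rrbracket$ and multiplicity one $\|C'\|$-a.e. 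Such a $C'$ is necessarily a finite signed sum of rays from the origin with $\pm 1$ multiplicities, which lifts to
\[
C=\sum_{i=1}^{k} n_i \llbracket H_{\theta_i}\rrbracket,\qquad n_i\in\{-1,+1\},
\]
where $H_{\theta_1},\ldots,H_{\theta_k}$ are pairwise distinct half-hyperplanes of $\R^{m+1}$ sharing the common boundary $V$, coherently oriented so that $\partial\llbracket H_{\theta_i}\rrbracket=\llbracket V\rrbracket$. Matching $\partial C=\llbracket V\rrbracket$ forces $\sum_{i=1}^{k} n_i=1$; in particular $k$ is a positive odd integer.

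To conclude, suppose by contradiction that $k\geq 3$. I would construct a competitor by cutting $C$ inside a large ball $B_R$, replacing $C\res B_R$ with $\llbracket H_{\theta_1}\rrbracket\res B_R$ (saving $(k-1)\omega_m R^m/2$ in mass), and rejoining the resulting discrepancy of slices on $\partial B_R$ by a thin annular bridge of thickness $\eps R$ whose mass is $O(\eps R^m)$, constructed via the isoperimetric inequality on the sphere. Choosing $\eps$ small and then $R$ large produces a competitor of strictly smaller mass with the same boundary as $C$, contradicting area-minimality. Hence $k=1$ and $\Theta(T,p)=\Theta(C,0)=1/2$.

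The two delicate points in this outline are (i) transferring the multiplicity-one condition from $T$ to every tangent cone $C$, which relies on the scale-invariance of the density together with strong varifold convergence under area-minimality, and (ii) making the annular bridge construction quantitative in the integral current framework. Without Theorem~\ref{t:main}, each $n_i$ would only be constrained to be a nonzero integer with $\sum_i n_i=1$, so the same mass comparison would only yield that the density is an odd multiple of $1/2$, not precisely $1/2$.
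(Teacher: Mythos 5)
Your argument takes a genuinely different route from the paper, and it contains two gaps that are not merely technical. The paper's proof is essentially immediate from two inputs: the codimension-one full boundary regularity theorem (\cite[Theorem 9.1]{steinbruechel}, the Riemannian version of Hardt--Simon), which says that \emph{every} point of $\supp(b)$ is a regular boundary point and hence has density $Q-1/2$ for some integer $Q\geq 1$, and the argument from the proof of Theorem \ref{t:main}, which for the typical $b$ excludes two-sided points and thus forces $Q=1$. You bypass this theorem and instead attempt a direct classification of all boundary tangent cones; carried out rigorously, this would amount to reproving the core of Hardt--Simon rather than deducing the corollary from Theorem \ref{t:main}.

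Concretely: (1) the claim that multiplicity one $\|T\|$-a.e.\ ``passes to the varifold limit'' is false as a general principle. Mass convergence of the blow-ups does hold by minimality, but two multiplicity-one sheets may collapse onto each other in the limit and produce a tangent cone with multiplicity $2$ on a set of positive $\Haus^m$-measure --- this is exactly what happens at interior branch points in higher codimension, and before boundary regularity is established nothing a priori forbids an analogous tangential collapse along $\Gamma$. Hence you cannot conclude $n_i\in\{-1,+1\}$, and without that constraint your final comparison only bounds the density by odd multiples of $1/2$, as you yourself note. (2) The splitting $C=\llbracket V\rrbracket\times C'$ does not follow from ``dimension reduction applied at a regular point of $V\cap\supp(C)$''. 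To split a minimizing cone along $V$ one needs the density of $C$ at every point of $V\setminus\{0\}$ to equal the density at the vertex; proving this constancy and maximality of the density along the spine of a boundary tangent cone is precisely the hard step of boundary regularity theory (Allard under convexity, Hardt--Simon in codimension one, \cite{Borda} in general) and is not available for free. If both gaps were filled, the remaining classification of one-dimensional minimizing cones and the cut-and-paste comparison would be acceptable, but at that point you would have reconstructed the theorem that the paper simply quotes.
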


We also deduce the following general result.

\begin{teorema}\label{c:main}
For the typical boundary $b \in \mathcal{B}_\eta$, there is a unique area-minimizing integral current $T$ with $\partial T=b$.
\end{teorema}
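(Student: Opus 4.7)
The plan is to invoke the Baire category theorem on $(\mathcal{B}_\eta, d)$ by writing the set of boundaries with more than one area-minimizer as a countable union of closed nowhere-dense subsets; Theorem \ref{t:main} enters in the density step via the multiplicity-one property. For each $k \in \N$ set
\[
A_k := \{b \in \mathcal{B}_\eta : \exists\, T, T' \in \AMC(b),\ \Flat(T - T') \ge 1/k\},
\]
so that the non-uniqueness set equals $\bigcup_k A_k$, and it suffices to show that each $A_k$ is closed with empty interior. Closedness is a standard consequence of compactness and lower semicontinuity: given $b_j \to b$ in $\mathcal{B}_\eta$ and $T_j, T_j' \in \AMC(b_j)$ with $\Flat(T_j - T_j') \ge 1/k$, the minimum-mass functional $M(b) := \min_T \Mass(T)$ is continuous on $\mathcal{B}_\eta$ (as one transports competitors between nearby boundaries via the cobordisms of Lemma \ref{l:cobordanti}), so the masses of $T_j$ and $T_j'$ are uniformly bounded; Federer--Fleming compactness then produces flat-convergent subsequences $T_j \to T$, $T_j' \to T'$ in $\AMC(b)$ with $\Flat(T - T') \ge 1/k$.

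For the empty interior of $A_k$, let $\mathcal R \subseteq \mathcal{B}_\eta$ denote the residual subset from Theorem \ref{t:main} on which every minimizer has multiplicity one. Were some $A_k$ to contain an open set $U$, the density of $\mathcal R$ would yield $b \in U \cap \mathcal R$ admitting two distinct multiplicity-one minimizers $T \ne T'$ at flat distance $\ge 1/k$, and it would then suffice to construct a perturbation $b' \in U$ with $b' \notin A_k$. Since $T$ and $T'$ both have density one along their supports and differ, there exist a regular point $p$ on some connected component of $\Gamma$ and a direction along which the local boundary geometries of $T$ and $T'$ are distinguishable. A small $C^{\ell,\alpha}$-bump of $\Gamma$ near $p$, designed so that the natural perturbation of $T'$ gains strictly more mass than any competitor flat-close to $T$, yields $b' \in U$ at which the perturbed $T$ strictly beats the perturbed $T'$ in mass; the upper semicontinuity established in the closedness step then confines every element of $\AMC(b')$ within flat distance $< 1/k$ of the perturbed $T$, contradicting $b' \in A_k$.

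The crux is this perturbation argument. Executing it rigorously requires a quantitative first-order mass-variation formula under $C^{\ell,\alpha}$-boundary deformations together with precise boundary regularity of multiplicity-one minimizers near $\Gamma$, both presumably inherited from the structure theory underpinning Theorem \ref{t:main}. The multiplicity-one input is essential: higher-density coincidence of $T$ and $T'$ along $\Gamma$ would equalize the first-order mass variation and defeat the symmetry-breaking, so the reduction to $b \in \mathcal R$ is doing real work.
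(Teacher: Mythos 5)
Your Baire-category skeleton coincides with the paper's Lemma \ref{l:denseimpliesresidual}: the same decomposition of the non-uniqueness set into the closed pieces $A_k$ (there called $NU_\m$), with closedness proved by mass bounds, Federer--Fleming compactness and lower semicontinuity. (One technical point you gloss over: to apply the compactness theorem one also needs the supports of the minimizers $T_j, T_j'$ to lie in a fixed compact set, which the paper establishes by a slicing/filling argument; and the equiboundedness of masses is obtained by filling $b-b_j$ via an isoperimetric-type lemma rather than by continuity of the minimal-mass functional. These are repairable.)

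The genuine gap is in the empty-interior step, which is the entire content of the theorem. You propose to take $b$ with two distinct multiplicity-one minimizers $T\neq T'$ and design a boundary bump so that ``the perturbed $T$ strictly beats the perturbed $T'$ in mass''; you acknowledge this is the crux and do not construct it. This cannot work as stated, for two reasons. First, distinct minimizers need not be distinguishable near $\Gamma$ by any first-order mass variation: they may coincide in a full neighbourhood of the boundary and differ only far in the interior, and there may be a continuum of minimizers all responding identically to the perturbation, so no ``symmetry-breaking direction'' at a boundary point exists. Second, even if one exhibited $b'$ at which $T'$ is beaten, upper semicontinuity of $\AMC$ only confines minimizers of $b'$ near the \emph{set} $\AMC(b)$, not near the single current $T$; it does not exclude two minimizers of $b'$ shadowing two far-apart minimizers of $b$, so $b'\notin A_k$ does not follow. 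The paper's Proposition \ref{p:densityfinale} avoids mass comparison between two given competitors altogether: after first perturbing to $\tilde b\notin\Psi(\mathcal A)$ (via Proposition \ref{p:baire30}, so that all regular boundary points of any minimizer are one-sided with density $1/2$), it pushes each component of the boundary \emph{into the support} of a chosen minimizer $T$, obtaining $\hat b$ with canonical minimizer $\hat T = T-\sum_i T_i'$. Uniqueness of $\hat T$ is then forced structurally: any competitor $\hat S$, after adding back the strips $T_i'$, is a minimizer of $\tilde b$ whose support contains an open piece of $\supp(T)$; interior regularity, the unique continuation principle of \cite[Lemma 7.2]{Morganarma}, and the connectedness of the interior regular set from \cite[Theorem 2.1]{Borda} yield $\supp(\hat S)=\supp(\hat T)$, and one-sidedness pins down multiplicity and orientation. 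This unique-continuation mechanism, not a quantitative mass-variation formula, is the missing idea.
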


Since the intersection of residual sets is a residual set, then for the typical boundary $b \in \mathcal{B}_\eta$
both the conclusion of Theorem \ref{t:main} and the conclusion of Theorem \ref{c:main} are satisfied.

 In Section \ref{s:flat}, we obtain a result in the spirit of Theorem \ref{c:main}, replacing the space $\mathcal{B}_\eta$ with a larger space of boundaries. On the other hand, the strong norm considered on $\mathcal{B}_\eta$ needs to be naturally substituted by a weaker one and we work on the manifold $\mathcal{M}:=\R^{m+n}$, with $m>1$.

We fix an arbitrary $C>0$, a compact, convex set $K \subset \R^{m+n}$ with nonempty interior and define \begin{equation} \mathcal{R}_C:=\{b \in \mathscr{B}_{m-1}(K) \cap \mathscr{I}_{m-1}(K): \mathbb{M}(b)\leq C\}.\end{equation}
We metrize $\mathcal{R}_C$ with the distance $d_{\,\flat}$ induced by the flat norm, see \eqref{e:flatdef}.

Also in this case, the space is complete, see Lemma \ref{l3:closed}, and we obtain the analogous result to Theorem \ref{c:main}, that is, the following 

\begin{teorema}\label{t:genericga}
For the typical boundary $b \in \mathcal{R}_C$, the set $\mathbf{AMC}(b)$ is a singleton.
\end{teorema}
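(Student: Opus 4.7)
The plan is to reduce Theorem \ref{t:genericga} to Theorem \ref{c:main} via a Baire category argument. For each integer $k \geq 1$, define the ``bad set''
\[
A_k := \{b \in \mathcal{R}_C : \text{there exist } T_1, T_2 \in \mathbf{AMC}(b) \text{ with } \Flat(T_1 - T_2) \geq 1/k\}.
\]
A boundary $b \in \mathcal{R}_C$ has a non-unique minimizer precisely when $b \in \bigcup_{k \geq 1} A_k$, so it suffices to prove that every $A_k$ is closed and has empty interior in $(\mathcal{R}_C, d_{\,\flat})$.

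Closedness of $A_k$ should be a routine compactness argument. Given $b_n \in A_k$ with $b_n \to b$ in $d_{\,\flat}$ and pairs $T_1^n, T_2^n \in \mathbf{AMC}(b_n)$ with $\Flat(T_1^n - T_2^n) \geq 1/k$, coning $b_n$ over a fixed interior point of $K$ yields a filling of mass at most $\mathrm{diam}(K) \cdot C$, providing a uniform bound $\Mass(T_i^n) \leq \mathrm{diam}(K) \cdot C$. The Federer--Fleming compactness theorem applied in the compact set $K$ produces flat-convergent subsequences $T_i^n \to T_i \in \mathscr{I}_m(K)$ with $\partial T_i = b$; continuity of the minimal filling mass $b \mapsto \min\{\Mass(T) : \partial T = b\}$ under flat convergence (via cone comparisons of $b - b_n$), together with the lower semicontinuity of $\Mass$, forces $T_i \in \mathbf{AMC}(b)$. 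Lower semicontinuity of $\Flat$ preserves the separation, giving $b \in A_k$.

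The heart of the proof is showing $A_k$ has empty interior. Starting from $b \in A_k$ and $\varepsilon > 0$, I would construct $b' \in \mathcal{R}_C \setminus A_k$ with $d_{\,\flat}(b, b') < \varepsilon$ in two steps. First, by polyhedral/smooth approximation of integral boundaries, followed by a slight contraction toward an interior point of $K$ to strictly reduce mass, I produce an oriented closed $C^{\ell,\alpha}$ submanifold $\Gamma \subset \mathrm{int}(K)$ of dimension $m-1$ such that $\tilde b := \llbracket \Gamma \rrbracket$ satisfies $\Mass(\tilde b) < C$ and $d_{\,\flat}(b, \tilde b) < \varepsilon/2$. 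Since $K$ is convex, hence contractible, $\tilde b$ is automatically a boundary in $K$, so $\tilde b \in \mathcal{R}_C$. Second, using $\tilde b$ as the base boundary in the construction of Section \ref{s:intro} (here $\mathcal{M} = \R^{m+n}$, which is trivially a Lipschitz neighbourhood retract and has vanishing $(m-1)$-th homology for $m > 1$), I build the complete metric space $\mathcal{B}_\eta$ of $C^{\ell,\alpha}$-perturbations of $\Gamma$, taking $\eta$ so small that every $b' \in \mathcal{B}_\eta$ is supported in $K$, lies in $\mathcal{R}_C$, and satisfies $d_{\,\flat}(b', \tilde b) < \varepsilon/2$; this is possible because $C^{\ell,\alpha}$-closeness of the defining graphs controls both mass and flat distance continuously. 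By Theorem \ref{c:main}, the set of $b' \in \mathcal{B}_\eta$ with $\mathbf{AMC}(b')$ a singleton is residual, hence nonempty, and any such $b'$ is the desired perturbation.

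The main obstacle I anticipate is the smooth approximation step under the hard constraint $\Mass(\tilde b) \leq C$: standard polyhedral/smoothing results typically yield only $\Mass(\tilde b) \leq (1+\delta)\Mass(b)$, which fails the bound precisely when $\Mass(b) = C$. The hypothesis that $K$ has nonempty interior is essential here, as it enables the contraction step that shaves off a strictly positive amount of mass at the cost of a controlled flat-norm displacement.
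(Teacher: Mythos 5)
Your reduction to closed sets $A_k$ and the compactness argument for their closedness is essentially the paper's Lemma \ref{l:flatdenseimpliesresidual} combined with Lemma \ref{l3:closed} (the paper invokes \cite[Theorem 34.5]{Simon1983LecturesTheory} where you argue via continuity of the minimal filling mass; both work). Your idea of contracting towards an interior point of $K$ to gain a strict mass margin before approximating is also exactly the paper's Lemma \ref{l:sharppolyhedral}. The divergence, and the problem, is in how you prove density of the good set.

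The gap is your first approximation step: you assert that an arbitrary $b \in \mathcal{R}_C$ can be approximated in flat norm, with mass control, by $\tilde b = \llbracket\Gamma\rrbracket$ for $\Gamma$ an oriented, \emph{closed, embedded, multiplicity-one} $C^{\ell,\alpha}$ submanifold, so that Theorem \ref{c:main} applies to the space $\mathcal{B}_\eta$ built on $\Gamma$. No standard theorem provides this. Federer's strong approximation (\cite[\S 4.2.21]{Federer1996GeometricTheory}) only yields an \emph{integer polyhedral} cycle $b_P$, which generically carries multiplicities greater than one and is singular along its $(m-2)$-skeleton; converting such a cycle into a nearby embedded multiplicity-one closed manifold requires splitting each multiplicity-$\theta_i$ face into $|\theta_i|$ parallel sheets and then resolving, in a consistent and orientation-compatible way, the branching along every $(m-2)$-face where several faces meet — a nontrivial surgery for which you cite nothing and which is not obviously possible in all dimensions and codimensions (for $m-1=1$ one can fall back on the decomposition of integral $1$-cycles into simple closed curves, but no analogous structure theorem produces submanifolds for $m-1\geq 2$). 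This is precisely the difficulty the paper's proof is engineered to avoid: instead of smoothing $b_P$, it takes $S\in\AMC(b_P)$, removes a small ball $B(x_i,r_i)$ around an interior regular point in each connected component of $\Regi(S)$, and sets $b':=\partial(S-S\res\bigcup_i B(x_i,r_i))$. The new boundary piece $\partial(S\res B(x_i,r_i))$ is automatically a multiplicity-$Q$ sphere sitting inside the smooth minimal surface $\Sigma$, and uniqueness of the minimizer for $b'$ is then proved directly by the unique continuation argument of Proposition \ref{p:densityfinale}, with no appeal to Theorem \ref{c:main} and no need for $b'$ to be a multiplicity-one submanifold. Unless you can supply the embedded multiplicity-one approximation (with the mass bound), your density step does not go through; if you can, the rest of your argument (containment of $\mathcal{B}_\eta$ in $\mathcal{R}_C$ for small $\eta$ via Lemma \ref{l:flatequivalence} and the mass estimate, nonemptiness of the residual set by Baire) is sound.
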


Even if the previous result is arguably weaker than Theorem \ref{c:main}, the strategy is quite flexible and can be adapted to variational problems in which the singular set of minimizers is so large that it can disconnect the regular part, see \cite{genericuniqbranchedtransport}.

\subsection{Content of the paper}

In Section \ref{s:notation}, we introduce the notation for currents and prove preliminary properties of our space of boundaries. In Section \ref{s:the_game} we play a Banach-Mazur game in the following context. The main idea behind Theorem \ref{t:main} is that for an area minimzing integral current, regular two-sided boundary points are contained in the support of the current which locally is smooth. However, the typical boundary is not contained in any 
% smooth
$m$-dimensional submanifold of higher regularity (proof of this can be found in Section \ref{s:the_game}) and thus, the typical boundary does not allow for area-minimizing currents with regular two-sided boundary points, which we prove in Section \ref{s:main_proofs}.
We deduce Theorem \ref{c:main} exploiting a technique introduced in \cite{Morganinventiones}. In Section \ref{s:flat} we prove Theorem \ref{t:genericga}.

%a similar result, which is more general in terms of the class of boundaries that we consider; on the other hand the term \emph{typical} is referred to a weaker notion of distance between boundaries. More precisely, we prove that in a natural complete metric space (of boundaries) metrized by the flat norm, the typical integral current admits a unique solution to the Plateau's problem, see Theorem \ref{t:genericga}.

\subsection{Previous results on generic properties of area-minimizing currents}

%One way to ensure the existence of minimal hypersurfaces in a closed Riemannian manifold is by the celebrated Almgren-Pitts min-max theory, see \cite{AlmTop, Pit81}. As for area-minimizers, Almgren-Pitts minimal hypersurfaces are smooth in an $(m+1)$-dimensional ambient manifold with $3 \le m + 1 \le 7$, while for larger values of $m$ singularities may occur. In particular, for $3 \le m + 1 \le 7$,
\emph{Generic} properties, in the sense of Baire categories, are of fundamental importance in the study of the well-posedness of solutions to geometric variational problems. Fine results have been derived when the ambient manifold is endowed with a $C^{\infty}$-generic metric, such as density, equidistribution, multiplicity one and Morse index estimates of min-max minimal hypersurfaces, see \cite{IrieMarNev, MarNevSong, ZhouM1Conj, MarNev21}. Recent generic regularity results have been obtained for locally stable minimal hypersurfaces in 8-dimensional closed Riemannian manifolds and for minimizing hypersurfaces in ambient manifolds of dimension 9 and 10, see \cite{WangLi22, ChodoshLioSpolaorARS} and \cite{ChodoshMantouSchulze} respectively. In other words, it has been shown that singularities can be ``perturbed away'' for generic ambient metrics or for slight perturbations of the boundary, leading to generic smoothness of solutions. Generic regularity for higher dimensional hypersurfaces is still an open problem and not much is known about generic regularity of minimal submanifolds with codimension higher than one, see \cite{White85HGeneric, White19HGeneric}.

Another question occurring naturally in connection with the Plateau problem is that of uniqueness of solutions: it goes back at least to the first decades of the twentieth century, to works by many authors, see \cite{Courantbook, DierkesbookI, Douglas, Nitsche, Rado}. There are many examples of curves admitting several different minimizers, see \cite{Morgancontinuum, Nitsche3}. However, the presence of many symmetries motivated the question whether uniqueness is a generic property itself, see \cite[Section I.11, (3)]{Bigregularity}.

Morgan proved in \cite{Morganinventiones} that almost every curve in $\R^3$ (with respect to a suitable measure) bounds a unique area-minimizing surface. The result has been later generalized by the same author to elliptic integrands and to any dimension and, in the special case of area-minimizing flat chains modulo 2, to any codimension, see \cite{Morganindiana, Morganarma}.  We remark that all Morgan's results are restricted to the Euclidean ambient setting or rely on uniform convexity assumptions on the boundary. In fact his proofs depend on Allard's boundary regularity theorem for stationary varifolds, which states that if a boundary $\Gamma$ is supported in the boundary of a uniformly convex set, then every point $p \in \Gamma$ has density $1/2$ and it is regular, see \cite{Allardphd}, \cite[\S 4]{Allardboundary}, \cite[Proposition 6.1]{Morganinventiones} and \cite[\S 4]{Morganindiana}. This allows Morgan to rule out the existence of \textit{two-sided} regular boundary points, namely regular boundary points at which the current ``crosses'' the boundary, see \cite[Example 1.3]{Borda}.  %{  (cite Hardt, Simon replacing uniform convexity by codimension 1, but dealing with two-sided regular boundary points)}.

Hardt and Simon proved in \cite{hardtSimonBoundary} that, for codimension one area-minimizing currents in the Euclidean space, every boundary point is regular (possibly two-sided) without assuming the convexity condition. More recently, the fourth-named author extended this result to codimension one Riemannian ambient manifolds, see \cite{steinbruechel}. A recent result by De Lellis, De Philippis, Hirsch and Massaccesi, see \cite{Borda}, proves the first general boundary regularity theorem with no restrictions on the codimension, showing that the set of regular boundary points (possibly two-sided) is dense, see also \cite{Nardulli4president} for a 2-dimensional analogue allowing for arbitrary boundary multiplicity.\\

In this article we prove generic uniqueness and the multiplicity-one property of area-minimizing integral currents in full generality, \emph{i.e.} for general ambient manifolds $\mathcal{M}$ of any dimension, for any codimension and with no convexity assumption on the geometry of the boundary $\Gamma$. Our result relies on the aforementioned boundary regularity theorem in \cite{Borda}, as our main goal is to prove the generic absence of two-sided boundary points.  It is worth mentioning that Morgan hints at generic uniqueness in the Baire sense when the ambient space is a manifold (see \cite[Remark 5.4]{Morganindiana}), but this case is restricted to codimension one submanifolds and still under convexity assumptions on the boundary.
%\newpage

\subsection*{Acknowledgements.}
We would like to thank Camillo De Lellis, Guido De Philippis, and Emanuele Spadaro for valuable discussions. The work of A. Ma. is partially supported by PRIN 2017TEXA3H\_002 "Gradient flows, Optimal Transport and Metric Measure Structures" and by GNAMPA-INdAM.  
During the writing of this work A.Me. 
was partially supported by the European Union’s Horizon Europe research and innovation programme under the Marie Sk\l odowska-Curie grant agreement no 101065346.
S.S. is supported by the German
Science Foundation DFG in context of the Priority Program SPP 2026 “Geometry at Infinity”.

%\subsection{Strategy of the proof}
%Explain why we need one-sided bdry points: example showing why the unique continuation argument fails for two-sided boundary points. Describe strategy for two-sided boundary points to one-sided (Banach-Mazur game) and REMARK that we prove residuality, even if density would suffice. Describe perturbation for one-sided regular boundary points to boundary with uniqueness (desirable to have an argument that is easily adaptable for flat perturbation).

%Side result on Flat perturbation, remarking similarities.

\section{Notation and preliminaries}\label{s:notation}
%We could otherwise define $\mathscr{D}_k(U):=\{T \in \mathscr{D}_k(\R^d) \mid \operatorname{supp}(T) \subset K\}$ with $U$ open of $\R^d$. This is because given a Riem. manifold $\mathcal{M}$, embedded in $\R^d$ we can assume $\mathcal{M}$ has a tubular neighbourhood $U \superset \mathcal{M}$ with $U \subset \R^d$, with a "smooth" retract $\pi: U \rightarrow \mathcal{M}$ such that $\pi(U) = \mathcal{M}$ and $\pi(x)=x \forall x \in \mathcal{M}$.
We briefly recall the relevant definitions of the theory of currents and we refer the reader to \cite{Federer1996GeometricTheory, Simon1983LecturesTheory} for a complete treatment of the subject. A $k$-dimensional \emph{current} on $\R^d$ ($k \leq d$) is a continuous linear functional on the space $\D^k(\R^d)$ of smooth and compactly supported differential $k$-forms in $\R^d$. The space of $k$-dimensional currents in $\R^d$ is denoted by $\D_k(\R^d)$. The \emph{boundary} of a current $T\in\D_k(\R^d)$ is the current $\partial T\in\D_{k-1}(\R^d)$ such that $$\partial T(\varphi)=T(d\varphi), \quad\mbox{ for every $\varphi\in\D^{k-1}(\R^d)$},$$
where as usual $d$ denotes the exterior differential.
Given $T \in \D_k(\R^d)$, the \emph{mass} of $T$ is denoted by $\Mass(T)$ and is defined as the supremum of $T(\omega)$ over all forms $\omega$ with $|\omega(x)| \leq 1$ for all $x \in \R^d$. The \emph{support} of a current $T$, denoted $\text{supp}(T)$, is the intersection of all closed sets $C$ in $\R^d$ such that $T(\omega)=0$ whenever $\omega \equiv 0$ on $C$. For every closed subset $K$ of $\R^d$, we will denote by $\D_k(K)$ the set 
$$\mathscr{D}_k(K):=\{T \in \mathscr{D}_k(\R^d) \mid \operatorname{supp}(T) \subset K\}.$$

Given a smooth, proper map $f : \R^d \rightarrow \R^{d'}$ and a $k$-current $T$ in $\R^d$, the \emph{push-forward} of $T$ according to the map $f$ is the $k$-current $f_{\sharp}T$ in $\R^{d'}$ defined by
\begin{equation}\label{d:push-forward}
f_{\sharp}T (\omega) := T(f^{\sharp}\omega), \quad\mbox{for every $\omega\in\mathscr{D}^k(\R^{d'})$},\end{equation} 
where $f^{\sharp}\omega$ denotes the pullback of $\omega$ through $f$. If $T$ has finite mass and compact support, then the previous definition can be extended to any $f$ of class $C^1$.

%A current $T$ such that $\Mass(T)+\Mass(\partial T)<\infty$ is called a \emph{normal} current. The space of $k$-dimensional normal currents with support in $K$ is denoted by $\Nor_k(K)$.
We say that a current $T\in\D_k(\R^d)$ is \emph{integer rectifiable} and we write $T\in\Rect_k(\R^d)$ if we can identify $T$ with a triple $(E,\tau,\theta)$, where $E\subset K$ is a $k$-rectifiable set, $\tau(x)$ is a unit $k$-vector spanning the tangent space $T_xE$ at $\Haus^k$-a.e. $x$ and $\theta\in L^1(\Haus^k \res E, \mathbb{Z})$ is an integer-valued multiplicity, where the identification means that the action of $T$ can be expressed by
\begin{equation}\label{e:rectifiablecurrent}
T(\omega)=\int_E\langle\omega(x),\tau(x)\rangle \theta(x) d\Haus^k(x), \quad \mbox{ for every $\omega\in \D^k(\R^d)$}.\end{equation}
If $T$ is as in \eqref{e:rectifiablecurrent}, we denote it by $T=\llbracket E, \tau, \theta \rrbracket$. We denote by $\mathscr{I}_k(\R^d)$ the subgroup of $k$-dimensional \emph{integral currents}, that is the set of currents $T\in\Rect_k(\R^d)$ with $\partial T\in\Rect_{k-1}(\R^d)$. If $T=\llbracket E, \tau, \theta \rrbracket\in\Rect_k(\R^d)$ and $B \subset \mathbb{R}^d$ is a Borel set, we denote the \emph{restriction} of $T$ to $B$ by setting $T\res B:=\llbracket E \cap B, \tau, \theta \rrbracket.$
The set of integer rectifiable (respectively integral) $k$-currents with support in a closed set $K$ is denoted by $\Rect_k(K)$ (respectively $\mathscr{I}_k(K)$).\\

We recall that the \emph{(integral) flat norm} $\Flat(T)$ of an integral current $T \in \mathscr{I}_k(K)$, with $K$ compact, is defined by:
\begin{equation}\label{e:flatdef}
\Flat(T) := \min \{ \Mass(R) + \Mass(S) \mid T=R +\partial S, \, R \in \mathscr{I}_k(K),\, S \in \mathscr{I}_{k+1}(K)\}.
\end{equation}
A $k$-dimensional \emph{polyhedral} current is a current $P$ of the form  
\begin{equation}\label{e:poly}
P:=\sum_{i=1}^N\theta_i\llbracket \sigma_i\rrbracket,
\end{equation}
where $\theta_i\in \R$, $\sigma_i$ are non-overlapping $k$-dimensional simplexes in $\R^d$, oriented by (constant) $k$-vectors $\tau_i$ and $\llbracket \sigma_i \rrbracket=\llbracket\sigma_i,\tau_i,1\rrbracket$ is the multiplicity-one current naturally associated to $\sigma_i$. A polyhedral current with integer coefficients $\theta_i$ is called \emph{integer polyhedral} and we denote the subgroup of $k$-dimensional integer polyhedral currents with support in $K$ by $\mathscr{P}_k(K)$.

\begin{lemma}\label{l:flatequivalence}
There exists a constant $C>0$ (depending only on $\max_i\{\Mass(\llbracket\Omega_i\rrbracket)\}$ and $\max_i\{\mathrm{Lip}(\mathbf{\Phi}_i)\}$) such that $\Flat(b-\bar b)\leq C \, d(b,\bar b)$, for every $b,\bar b\in \mathcal{B}_\eta$.
\end{lemma}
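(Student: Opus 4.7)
The plan is to produce, for every pair $b,\bar b\in\mathcal{B}_\eta$, an integral current $Q\in\mathscr{I}_m(\mathcal{M})$ with $\partial Q=b-\bar b$ and $\Mass(Q)\leq C\,d(b,\bar b)$; choosing $R=0$ and $S=Q$ in \eqref{e:flatdef} then yields the statement. Since by \eqref{e:defPSI} the boundaries $b$ and $\bar b$ agree on $\mathcal{M}\setminus\bigcup_{i=1}^N\Phii_i(U_i)$, it is enough to construct, separately on each $\Phii_i(U_i)$, a cobordism between $(\Phii_i)_\sharp\a{gr(u_i(b))}$ and $(\Phii_i)_\sharp\a{gr(u_i(\bar b))}$, and then sum the resulting currents with appropriate sign.

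For each $i$, the natural candidate is the affine homotopy $G_i\colon[0,1]\times\Omega_i\to U_i$ given by $G_i(t,x):=(x,(1-t)u_i(b)(x)+t\,u_i(\bar b)(x))$; by convexity of $X_\eta(p_i)$ and by \eqref{e:sceltaeps} its image stays inside $U_i$. Setting $S_i:=(\Phii_i)_\sharp(G_i)_\sharp(\a{[0,1]}\times\a{\Omega_i})\in\mathscr{I}_m(\mathcal{M})$ and using the product boundary formula $\partial(\a{[0,1]}\times\a{\Omega_i})=(\a{1}-\a{0})\times\a{\Omega_i}-\a{[0,1]}\times\partial\a{\Omega_i}$, one gets $\partial S_i=(\Phii_i)_\sharp(\a{gr(u_i(\bar b))}-\a{gr(u_i(b))})$. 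The lateral term vanishes because $u_i(b)=u_i(\bar b)=f_i$ on $\partial\Omega_i$ (this is precisely the purpose of the constraint in \eqref{e:deficseps}), so $G_i$ is $t$-constant on $[0,1]\times\partial\Omega_i$ and the corresponding push-forward has zero mass. Hence $Q:=-\sum_{i=1}^N S_i$ satisfies $\partial Q=b-\bar b$.

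The mass of $S_i$ is then estimated via the area formula as $\Mass(S_i)\leq \Lip(\Phii_i)^m\int_0^1\!\!\int_{\Omega_i'}|J_m G_i|(t,x)\,dx\,dt$. A direct computation of the $m$-vector $\partial_tG_i\wedge\partial_{x_1}G_i\wedge\cdots\wedge\partial_{x_{m-1}}G_i$ bounds the Jacobian by $|u_i(b)(x)-u_i(\bar b)(x)|$ times a quantity depending on the $C^1$-norms of $u_i(b),u_i(\bar b)$, themselves uniformly controlled by $\|f_i\|_{C^1}+\eta$. Summing over $i$ and using $\|\cdot\|_{C^0}\leq\|\cdot\|_{C^{\ell,\alpha}}$ gives $\Flat(b-\bar b)\leq\sum_i\Mass(S_i)\leq C\,d(b,\bar b)$, with $C$ depending on $\max_i\Mass(\a{\Omega_i})$, $\max_i\Lip(\Phii_i)$, and the fixed data $f_i,\eta$. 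The only delicate point is the vanishing of the lateral boundary term, which is however automatic from the defining constraint of $X_\eta(p_i)$; all remaining steps are routine.
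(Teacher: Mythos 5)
Your proof is correct and follows essentially the same route as the paper's: a linear (affine) homotopy between the two graphs, pushed forward by $\Phii_i$, with the lateral boundary term vanishing because $u_i(b)$ and $u_i(\bar b)$ coincide (with $f_i$) on a neighbourhood of $\partial\Omega_i$. The only differences are cosmetic. The paper reduces to $N=1$ by writing $\bar b - b = \sum_i(\bar b^i - b^i)$ with $b^i := b\res\Phii_i(U_i)+\llbracket\Gamma\rrbracket\res(\mathcal{M}\setminus\Phii_i(U_i))$ and then invokes the homotopy formula of Federer \S 4.1.9 to bound the mass of $F_\sharp(I\times\llbracket\Omega\rrbracket)$; you handle all components at once and estimate the mass of the cobordism directly via the area formula. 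Both yield the same bound. One thing worth noting: your observation that the constant also depends on the $C^1$ data of the $f_i$ (and on $\eta$), through the bound on $\Lip(\textbf{u}_i)$, is in fact more accurate than the paper's claim that $C$ depends only on $m$ and $\Lip(\bar u - u)$ — the Jacobian of the affine homotopy involves $\max(\Lip(\textbf{u}_i),\Lip(\bar{\textbf{u}}_i))^{m-1}$, not $\Lip(\bar u_i - u_i)^{m-1}$. Since $f_i$ and $\eta$ are fixed ambient data, this does not affect the use of the lemma, but your bookkeeping is the correct one.
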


\begin{proof}
It is sufficient to prove the lemma for $N=1$. Indeed, denoting for every $b\in\mathcal{B}_\eta$ and for $i=1,\dots, N$ the boundary $b^i\in\mathcal{B}_\eta$ defined by $$b^i:=b\res(\Phii_i(U_i))+\llbracket\Gamma\rrbracket\res (\mathcal{M}\setminus \Phii_i(U_i)),$$
we have $$\bar b-b=\sum_{i=1}^N \bar b^i-b^i,$$ so that
$$\Flat(\bar b- b)\leq\sum_{i=1}^N\Flat(\bar b^i-b^i)\leq N\max_{i=1,\dots,N}\Flat(\bar b^i- b^i).$$
Hence we can assume that $N=1$ and for $w\in {\mathbf{X}}_\eta$ we define $\textbf{w}:\Omega\to\R^{m+n}$ by 
\begin{equation}\label{e:defbold}
\textbf{w}(x):=(x,w(x)).    
\end{equation}  
Let $u:=\Psi^{-1}(b)$ and $\bar u:=\Psi^{-1}(\bar b)$
% Recalling the notation \eqref{e:inverse}, we abbreviate $\textbf{u}:=\textbf{u(b)}$ and $\bar{\textbf{u}}:=\textbf{u(}\bar{\textbf{b}}\textbf{)}$
and we denote $I:=\llbracket [0,1] \rrbracket \in \mathscr{I}_1(\R)$ and we let $F:[0,1] \times \Omega \rightarrow \R^{m+n}$ be the linear homotopy $$F(t,x)=(1-t)\textbf{u}(x)+ t\bar{\textbf{u}}(x).$$ Denote $S:=F_{\sharp}(I \times \llbracket 
\Omega \rrbracket)$. We use \cite[26.18]{Simon1983LecturesTheory} to compute 
\begin{equation*}
    \begin{split}
    \partial S=&F_\sharp(\partial(I\times\llbracket \Omega \rrbracket))=F_\sharp(\partial I\times\llbracket \Omega \rrbracket-I\times\partial\llbracket \Omega \rrbracket)\\
    =& F_\sharp(\delta_1\times\llbracket \Omega \rrbracket)-F_\sharp(\delta_0\times\llbracket \Omega \rrbracket)-F_\sharp(I\times\partial\llbracket \Omega \rrbracket)\\
    =&(\bar{\textbf{u}})_\sharp\llbracket \Omega \rrbracket-(\textbf{u})_\sharp\llbracket \Omega \rrbracket -F_\sharp(I\times\partial\llbracket \Omega \rrbracket)\\
    =& \llbracket gr(\bar u) \rrbracket-\llbracket gr(u) \rrbracket-F_\sharp(I\times\partial\llbracket \Omega \rrbracket)=\llbracket gr(\bar u) \rrbracket-\llbracket gr(u) \rrbracket,
\end{split}
\end{equation*}
where the last equality is due to the fact that $\bar{\textbf{u}}=\textbf{u}$ on $\partial\Omega$. Hence, by the homotopy formula, {see \cite[\S 4.1.9]{Federer1996GeometricTheory}, we can estimate
\begin{equation} \label{e:homotopy_estimate}
\begin{aligned} 
    \Flat(\llbracket gr(\bar u) \rrbracket-\llbracket gr(u) \rrbracket)& \leq \lVert \bar{\textbf{u}} - \textbf{u} \rVert_{\infty} \operatorname{sup}_{x \in \Omega}\left(|D\textbf{u}(x)-D\bar{\textbf{u}}(x)|\right)^{m-1} \Mass(\llbracket \Omega \rrbracket) \\ 
    &\leq C \lVert \bar{\textbf{u}} - \textbf{u} \rVert_{C^{\ell,\alpha}}=C \lVert \bar u - u \rVert_{C^{\ell,\alpha}},
\end{aligned}
\end{equation}
where the constant $C$ in the second line depends only on $m$ and the Lipschitz constant of $\bar u-u$, which is bounded by 2 by definition of ${\mathbf{X}}_\eta$}. Therefore, since $\Phii$ is of class $C^{h,\beta}$, we infer
$$\Flat(\bar b- b)=\Flat(\Phii_\sharp(\llbracket gr(\bar  u) \rrbracket-\llbracket gr(u) \rrbracket))
\leq C\, \mathrm{Lip}(\mathbf{\Phi}_i)^{m-1}\lVert \bar u - u \rVert_{C^{\ell,\alpha}}=C\, d(\bar b, b),$$
where the last identity follows from the definition of the distance $d$. 
\end{proof}

\begin{lemma}\label{l:cobordanti}
For every $b\in\mathcal{B}_\eta$ there exists a current $S\in\mathscr{I}_m(\mathcal{M})$ such that $\llbracket\Gamma\rrbracket-b=\partial S$. In particular all the elements of $\mathcal{B}_\eta$ are in the same homology class.
\end{lemma}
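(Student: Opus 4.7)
The plan is to explicitly produce the current $S$ by interpolating each perturbation $u_i$ back to the original parametrization $f_i$, via the linear homotopy already used in Lemma \ref{l:flatequivalence}. Writing $(u_1,\dots,u_N)=\Psi^{-1}(b)$, definition \eqref{e:defPSI} together with \eqref{e:bizzero} yields
\begin{equation*}
b_0-b=\sum_{i=1}^N\bigl(\llbracket\Phii_i(gr(f_i))\rrbracket-\llbracket\Phii_i(gr(u_i))\rrbracket\bigr),
\end{equation*}
so it suffices, for each $i=1,\dots,N$, to find $S_i\in\mathscr{I}_m(\mathcal{M})$ whose boundary is $\llbracket\Phii_i(gr(u_i))\rrbracket-\llbracket\Phii_i(gr(f_i))\rrbracket$.

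First I would work in the chart $\Phii_i(U_i)$. With the notation \eqref{e:defbold} of the previous lemma, set $\textbf{f}_i(x):=(x,f_i(x))$, $\textbf{u}_i(x):=(x,u_i(x))$ and define the linear homotopy $F_i:[0,1]\times\Omega_i\to\R^{m+n}$ by $F_i(t,x):=(1-t)\textbf{f}_i(x)+t\textbf{u}_i(x)$. Choice \eqref{e:sceltaeps} guarantees that the image of $F_i$ is contained in $U_i$. The current $T_i:=(F_i)_\sharp\bigl(\llbracket[0,1]\rrbracket\times\llbracket\Omega_i\rrbracket\bigr)$ is an $m$-dimensional integral current in $U_i$, and the computation already performed in Lemma \ref{l:flatequivalence}, via \cite[26.18]{Simon1983LecturesTheory}, gives $\partial T_i=\llbracket gr(u_i)\rrbracket-\llbracket gr(f_i)\rrbracket$ (the side term vanishes because $u_i\equiv f_i$ on $\Omega_i\setminus\Omega_i'$, hence on $\partial\Omega_i$).

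Then I would push forward by the $C^{h,\beta}$-diffeomorphism $\Phii_i$. Since push-forward commutes with the boundary operator and preserves integrality of currents with compact support, $S_i:=(\Phii_i)_\sharp T_i\in\mathscr{I}_m(\Phii_i(U_i))\subset\mathscr{I}_m(\mathcal{M})$ and
\begin{equation*}
\partial S_i=(\Phii_i)_\sharp\partial T_i=\llbracket\Phii_i(gr(u_i))\rrbracket-\llbracket\Phii_i(gr(f_i))\rrbracket.
\end{equation*}
Setting $S:=-\sum_{i=1}^N S_i$ yields $\partial S=b_0-b=\llbracket\Gamma\rrbracket-b$, and $S\in\mathscr{I}_m(\mathcal{M})$ because the supports $\Phii_i(U_i)$ are pairwise disjoint, so the sum is a finite sum of integral currents in $\mathcal{M}$. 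The ``in particular'' part follows at once: for any $b,\bar b\in\mathcal{B}_\eta$, the difference $b-\bar b=(b-b_0)+(b_0-\bar b)$ is the boundary of an integral current, so all elements of $\mathcal{B}_\eta$ represent the same class in integral homology. The only point requiring care, which is really the heart of the argument, is the boundary identity $\partial T_i=\llbracket gr(u_i)\rrbracket-\llbracket gr(f_i)\rrbracket$, but this is exactly the calculation carried out in the previous lemma, so no new ideas are needed.
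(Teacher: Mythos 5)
Your proof is correct and follows essentially the same route as the paper: in each chart one takes the linear homotopy between $\textbf{f}_i$ and $\textbf{u}_i$, applies the boundary computation of Lemma \ref{l:flatequivalence} (the side term vanishing since $u_i\equiv f_i$ near $\partial\Omega_i$), and pushes forward by $\Phii_i$. The only difference from the paper's proof is the sign convention in defining $S_i$, which is immaterial.
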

\begin{proof}
For every connected component of $\Gamma$, we consider the corresponding $U_i,\Phii_i, \Omega_i$, $f_i$, defined in the introduction. We now argue as in the proof of Lemma \ref{l:flatequivalence}, replacing $\bar{b}$ with $\llbracket\Gamma\rrbracket$ to define a current $S_i\in\mathscr{I}_m(\R^{m+n})$ such that $\partial S_i=\llbracket gr(f_i)\rrbracket - \llbracket gr(u_i)\rrbracket$.
The current $S:=\sum_{i=1}^N(\Phii_i)_{\sharp}(S_i)$ satisfies the requirement.
\end{proof}

\section{The typical $C^{\ell,\alpha}$ graph avoids $C^{h,\beta}$ submanifolds}\label{s:the_game}

%Remark that, for the interested readers, we prove the result for general h,k.

The proof of Theorem \ref{t:main} is obtained combining the boundary regularity result of \cite{Borda} and the following property of the typical map $u\in X_\varepsilon(P)$, see \eqref{e:deficseps}. For every open set $V\subset U\subset\R^{n+m}$ such that $gr(u\res\Omega')\cap V\neq\emptyset$ and, for every $m$-dimensional submanifold $\mathcal{N}$ of class $C^{h,\beta}$ in $\R^{m+n}$ with $\partial\mathcal{N}\cap V=\emptyset$ it holds $gr(u)\cap V\not\subset\mathcal{N}$.

For the sake of generality, in this section we prove this result for $u:\R^{m-k}\to\R^{n+k}$, for every $k<m$. %replacing the $C^3$-norm on $X_\varepsilon(P)$ with any $C^\ell$ norm, for $0\leq\ell\leq h$. 
For the purpose of this paper, this generalization is unnecessary, however, we include it since it does not require any additional effort.\\

In the following let $n,m\geq 1$, and $0\leq k<m$.
Throughout this section we will denote $\{e_1,\dots, e_{m+n}\}$ the standard basis of $\R^{m+n}$. Let $\Omega$ be a fixed open bounded set in $\R^{m-k}=\langle e_1,\dots, e_{m-k}\rangle$. We further fix $h\in\N \setminus{\{0\}}$, $\ell\in\N$, $\alpha,\beta\in[0,1]$ and $\gamma\in[0,2]\setminus\{1\}$ so that $\ell+\alpha<\ell+\gamma<h+\beta$, a function $f:\Omega\to \R^{n+k}$ of class $C^{\ell,\alpha}$ and an open set $\Omega'$ compactly contained in $\Omega$. For fixed $\varepsilon>0$, we let 
\begin{equation*}%\label{e:def} is never used
    X_{\varepsilon}:=\{u\in C^{\ell,\alpha}(\Omega, \R^{n+k}): f-u\equiv 0 \mbox{ on }\Omega\setminus\Omega',\, \lVert f-u\rVert_{C^{\ell,\alpha}}\leq \varepsilon\},
\end{equation*}
where we denoted
$$\lVert u\rVert_{C^{\ell,\alpha}}=\lVert u\rVert_{C^\ell}+[D^\ell u]_\alpha:=\lVert u\rVert_\infty+\sum_{j=1}^\ell\lVert D^ju\rVert_\infty+\sup_{x\neq y\in\Omega} \frac{|D^\ell u(x)-D^\ell u(y)|}{\lvert x-y\rvert^\alpha}.
$$
we further endow $X_\varepsilon$ with the norm $\lVert\cdot\rVert_{C^{h,\alpha}}$. We observe that the space $X_\varepsilon(P)$ defined in \eqref{e:deficseps}, fits this definition with $k=1$ and $\ell\geq 3$.\\

We begin with the following observation.

\begin{lemma}\label{lemmachiusura}
The space $(X_\varepsilon,\lVert\cdot\rVert_{C^{\ell,\alpha}})$ is complete. In particular the space $(\mathcal{B}_\eta,d)$ is also complete.
\end{lemma}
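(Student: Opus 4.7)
The plan is to realize $X_\varepsilon$ as a closed subset of the Banach space $C^{\ell,\alpha}(\Omega, \R^{n+k})$ and deduce completeness from that, and then transport completeness to $(\mathcal{B}_\eta, d)$ through the isometry $\Psi$.

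First I would recall (or cite) the standard fact that $C^{\ell,\alpha}(\Omega, \R^{n+k})$, endowed with the norm $\lVert\cdot\rVert_{C^{\ell,\alpha}}$ defined in the excerpt, is a Banach space: a Cauchy sequence is Cauchy uniformly together with all its derivatives up to order $\ell$, so it converges in $C^\ell$, and the H\"older seminorm of the limit is controlled by the $\liminf$ of the seminorms of the approximants. Next I would write $X_\varepsilon = f + Y_\varepsilon$, where
\[
Y_\varepsilon := \{v \in C^{\ell,\alpha}(\Omega,\R^{n+k}) : v\equiv 0 \text{ on } \Omega\setminus\Omega',\ \lVert v\rVert_{C^{\ell,\alpha}} \le \varepsilon\}.
\]
Since completeness is a translation-invariant property, it suffices to show that $Y_\varepsilon$ is closed in $C^{\ell,\alpha}(\Omega,\R^{n+k})$.

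To prove closedness of $Y_\varepsilon$, I would take a sequence $v_n \in Y_\varepsilon$ with $v_n \to v$ in $C^{\ell,\alpha}$. The norm convergence implies uniform convergence of $v_n$ to $v$ on $\Omega$, so $v\equiv 0$ on $\Omega\setminus\Omega'$ since each $v_n$ does. Moreover $\lVert v\rVert_{C^{\ell,\alpha}} \le \varepsilon$ because the norm $\lVert\cdot\rVert_{C^{\ell,\alpha}}$ is continuous with respect to itself (equivalently, the closed ball of radius $\varepsilon$ is closed). Hence $v \in Y_\varepsilon$. This gives completeness of $(X_\varepsilon,\lVert\cdot\rVert_{C^{\ell,\alpha}})$, and in particular of each $X_\eta(p_i)$.

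For the second assertion, I would observe that ${\mathbf{X}}_\eta = \prod_{i=1}^N X_{\eta}(p_i)$, endowed with the sum-of-norms distance \eqref{e:proddist}, is a finite product of complete metric spaces and therefore complete. By the very definition of the distance $d$ in \eqref{e:distance}, the map $\Psi\colon {\mathbf{X}}_\eta \to \mathcal{B}_\eta$ is a surjective isometry onto its image $\mathcal{B}_\eta$, and $\Psi$ is injective (as noted in the excerpt). Since the isometric image of a complete metric space is complete, $(\mathcal{B}_\eta, d)$ is complete. There is no serious obstacle here; the only point requiring a little care is the verification that the $C^{\ell,\alpha}$ norm is continuous along its own convergent sequences, which is immediate from the definition of the H\"older seminorm as a supremum.
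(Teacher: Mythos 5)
Your proof is correct and follows essentially the same route as the paper's: you prove $X_\varepsilon$ is closed in the Banach space $C^{\ell,\alpha}(\Omega,\R^{n+k})$ and then transport completeness to $\mathcal{B}_\eta$ via the isometry $\Psi$. The only cosmetic difference is that you translate by $f$ to work with $Y_\varepsilon$ and you spell out the verification that the norm bound $\lVert f-u\rVert_{C^{\ell,\alpha}}\le\varepsilon$ passes to the limit, which the paper leaves implicit.
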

\begin{proof}
It suffices to show that $X_\varepsilon$ is closed in $(C^{\ell,\alpha}, \lVert\cdot\rVert_{C^{\ell,\alpha}})$. Let $u_n$ be a sequence of elements in $X_\varepsilon$ and let $u \in C^{\ell,\alpha}$ be such that $\lVert u_n - u \rVert_{C^{\ell,\alpha}} \rightarrow 0$. Obviously $f-u\equiv 0$ on $\Omega\setminus\Omega'$ and $u\in C^{\ell,\alpha}$, hence $u \in X_{\varepsilon}$.

The fact that $\mathcal{B}_\eta$ is complete follows from the fact that $\Psi$ is an isometry between the product space { ${\mathbf{X}}_\eta$} defined in \eqref{e:defX} endowed with the distance induced by the norm \eqref{e:proddist} and $(\mathcal{B}_\eta,d)$.
\end{proof}
We then introduce a subset of $X_\varepsilon$ which roughly consists of those functions whose graph has small intersection with any submanifold of class $C^{h,\beta}$. We let $\pi_\Omega:\Omega\times{\R^{n+k}}\to\Omega$ be the orthogonal projection on the first $m-k$ coordinates of $\R^{m+n}$.
For every open set $A\subset\Omega$ we denote 
$$C_A:=\{(z_1,z_2)\in \Omega\times \R^{n+k}:z_1\in A\}$$ 
and we abbreviate $C(x,r):=C_{B(x,r)}$. 

\begin{definizione}\label{def:bordibrutti}
Let $\mathcal{A}$ be the set of those $ w\in X_\varepsilon$ for which there exists an embedded $m$-dimensional manifold $\mathcal{N}\subset\R^{m+n}$ of class $C^{h,\beta}$ and an open set $O\subset C_{\Omega'}$ such that 
\begin{equation*}
    \partial \mathcal{N}\cap O=\emptyset\qquad \text{and}\qquad\emptyset\neq \mbox{gr}(w)\cap O\subset\mathcal{N}.
\end{equation*}
\end{definizione}

The aim of this section is to prove the following proposition.

\begin{proposizione}\label{p:baire30}
The set $\mathcal{A}$ is of first category in $X_\varepsilon$, i.e. it is contained in a countable union of closed sets with empty interior.
\end{proposizione}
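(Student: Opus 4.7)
The plan is to exhibit $\mathcal{A}$ as a countable union of closed, nowhere-dense subsets of $X_\varepsilon$. For each $j\in\N$, I would define $\mathcal{A}_j\subset\mathcal{A}$ as the set of $w\in X_\varepsilon$ admitting a witness $(\mathcal{N},O)$ as in Definition \ref{def:bordibrutti} with the following uniform bounds: the submanifold $\mathcal{N}$ is the image of a $C^{h,\beta}$-embedding $\Phi:\overline{B_1(0)}\subset\R^m\to\R^{m+n}$ with $\|\Phi\|_{C^{h,\beta}}\leq j$ and $\mathrm{Lip}(\Phi^{-1})\leq j$; the open set $O$ contains a Euclidean ball $B(p_0,1/j)$ with $p_0\in\mathrm{gr}(w)$, $\mathrm{dist}(p_0,\Phi(\partial B_1(0)))\geq 1/j$, and $\mathrm{dist}(\pi_\Omega(p_0),\partial\Omega')\geq 1/j$. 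By locally reparametrizing and restricting any witness of $w\in\mathcal{A}$, one obtains a witness of this form for $j$ sufficiently large, so $\mathcal{A}=\bigcup_j\mathcal{A}_j$.

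Closedness of $\mathcal{A}_j$ would follow by Arzelà--Ascoli: along any sequence $w_n\to w$ in $C^{\ell,\alpha}$ with witnesses $(\Phi_n,p_{0,n})$, the points $p_{0,n}$ lie in a fixed compact subset of $\R^{m+n}$ and a subsequence of $\Phi_n$ converges in $C^{h,\beta'}$ for some $\beta'\in(\alpha,\beta)$ to an embedding $\Phi$ with $\|\Phi\|_{C^{h,\beta}}\leq j$ by lower-semicontinuity of the Hölder seminorm; the containment $\mathrm{gr}(w)\cap B(p_0,1/j)\subset\Phi(\overline{B_1(0)})$ and the distance conditions then pass to the limit by continuity.

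The core step is to show each $\mathcal{A}_j$ has empty interior. Given $w\in X_\varepsilon$ and $\delta>0$, I would produce a perturbation $\phi$ supported in $\Omega'$ with $\|\phi\|_{C^{\ell,\alpha}}<\delta$ such that $w+\phi\notin\mathcal{A}_j$, obtained by superposing a Weierstrass-type bump
\[
\phi_{x_0}(x)=c\,\chi(x-x_0)\sum_{i\geq 1}2^{-i(\ell+\gamma)}\sin\bigl(2^i\langle\xi,x-x_0\rangle\bigr)\,v_i
\]
at finitely many centers $x_0$ forming a $1/(4j)$-net of the compact region $\{a\in\Omega':\mathrm{dist}(a,\partial\Omega')\geq 1/j\}$, where $\chi$ is a smooth cutoff supported in $B(0,1/(4j))$, $\xi\in S^{m-k-1}$ is a fixed direction, $\{v_i\}_{i\geq 1}\subset S^{n+k-1}$ is a prescribed dense sequence, and $c>0$ is small enough to guarantee $\|\phi\|_{C^{\ell,\alpha}}<\delta$ (each $i$-th term has seminorm $O(2^{i(\alpha-\gamma)})$, summable because $\gamma>\alpha$). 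Assuming for contradiction $w+\phi\in\mathcal{A}_j$ with witness $(\Phi,p_0,\mathcal{N})$, one writes $\mathcal{N}$ locally as the graph over $T_{p_0}\mathcal{N}$ of a $C^{h,\beta}$ function $G$ with $G(0)=DG(0)=0$ and $\|G\|_{C^{h,\beta}}\lesssim j$, and the Taylor remainder of $G$ to order $h$ yields the Campanato-type bound
\[
\bigl|\pi_N\bigl((a,(w+\phi)(a))-p_0\bigr)-T_hG\bigl(\pi_T\bigl((a,(w+\phi)(a))-p_0\bigr)\bigr)\bigr|\leq C_j\,|a-a_0|^{h+\beta}
\]
on $B(a_0,1/j)\cap\Omega$, with $N=(T_{p_0}\mathcal{N})^\perp$. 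Analyzing the $i$-th Fourier mode of this identity, the Weierstrass content of $\pi_N\phi$ at frequency $2^i$ has amplitude $\sim 2^{-i(\ell+\gamma)}|Bv_i|$, where $B:\R^{n+k}\to N$ is the rank-$n$ restriction of $\pi_N$ to $\{0\}\times\R^{n+k}$; the quadratic vanishing of $T_hG$ at the origin makes the corresponding contribution of $T_hG\circ\pi_T$ subdominant at high frequencies, so the Campanato bound forces $|Bv_i|\lesssim 2^{-i(h+\beta-\ell-\gamma)}\to 0$. Since $\{v_i\}$ is dense in $S^{n+k-1}$ and $\ker B$ is a fixed $k$-dimensional subspace, however, $|Bv_i|\geq\eta>0$ for infinitely many $i$, a contradiction.

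The main obstacle is making this Fourier-analytic refutation rigorous. The projection $\pi_N$ depends on the continuously varying witness $\mathcal{N}$ taken from a compact family, and the Taylor composition $T_hG\circ q(a)$ involves the $C^{\ell,\alpha}$-regular argument $q(a)=\pi_T((a,(w+\phi)(a))-p_0)$; keeping careful track of how the nonlinear contribution of $\phi$ through $T_hG$ remains subdominant to the linear Weierstrass content of $\pi_N\phi$ at high frequencies, together with the uniformity afforded by the finite covering and the density of $\{v_i\}$, is the technical heart of the proof, and this is where the regularity gap $\ell+\alpha<\ell+\gamma<h+\beta$ and the exclusion $\gamma\neq 1$ enter decisively.
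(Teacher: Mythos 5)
Your proposal takes a genuinely different route from the paper: you try a direct decomposition $\mathcal{A}=\bigcup_j\mathcal{A}_j$ into quantitatively bounded witness classes, closedness via Arzel\`a--Ascoli, and empty interior via a lacunary Weierstrass-type perturbation plus a Fourier-mode comparison. The paper instead factors through the auxiliary set $A$ (Definition \ref{insiemeA}), reduces $\mathcal{A}$ to finitely many coordinate-plane versions $A_V$ of $A$ via the implicit function theorem, proves that $A$ is of first category by exhibiting a winning strategy for $P2$ in the Banach--Mazur game (Propositions \ref{BMgioco} and \ref{p:baire1}), and the key perturbation in Proposition \ref{lemma:principale} is a \emph{single} power-type bump $\eta\,\delta^{1+\gamma}\lvert x_1-z_1\rvert^{\ell+\gamma}$ added to a \emph{mollified} version $v=\varphi_\iota*(\eta f_\delta)+(1-\eta)f_\delta$ of the current center. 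That mollification is crucial: it makes the base function smooth on the relevant ball, so the identity \eqref{e:psidelta} pits an exact $\delta^{1+\gamma}t^{\ell+\gamma}$ against a $C^{h,\beta}$ composition of smooth maps, and the contradiction \eqref{e:contradiction} is a one-line Taylor coefficient comparison — no Fourier analysis at all.

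The gap in your argument is exactly where you flag it, and it is genuine rather than merely technical. You do not mollify $w$ before perturbing, so the composition $T_hG\circ q$ involves the merely $C^{\ell,\alpha}$ function $w$ at every order. In particular, the cross term $D^2G(0)[q_0,q_\phi]$ is \emph{linear} in $\phi$, modulated by $q_0$ (the $w$-dependent part of $q$). Its contribution at frequency $2^i$ is of size comparable to $\lVert q_0\rVert_{\infty}\,2^{-i(\ell+\gamma)}$, i.e.\ the \emph{same} order as the ``linear Weierstrass content'' $\lvert Bv_i\rvert\,2^{-i(\ell+\gamma)}$ you want to isolate; the ``quadratic vanishing of $T_hG$'' controls the purely $\phi$-quadratic terms $D^2G(0)[q_\phi,q_\phi]$ (amplitude $\sim 2^{-2i(\ell+\gamma)}$) but not these cross terms. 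Moreover, the pointwise Campanato bound $\lvert\cdot\rvert\le C_j\lvert a-a_0\rvert^{h+\beta}$ on a fixed ball does not directly give Fourier-side decay for the $2^i$-mode of a non-periodic, cut-off function; converting it requires an integration-by-parts or Littlewood--Paley step which in turn needs uniform smoothness that $w$ does not possess. So as written the step ``the Campanato bound forces $\lvert Bv_i\rvert\lesssim 2^{-i(h+\beta-\ell-\gamma)}$'' does not follow. Had you pre-mollified $w$ (as the paper does with $f_\delta$), the cross terms would be smooth and Taylor-comparable, and you could have used a single power-type bump and dispensed with the Weierstrass sum and the density of $\{v_i\}$ entirely — which is essentially the paper's Proposition \ref{lemma:principale}. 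The remaining parts of your plan (the decomposition $\mathcal{A}=\bigcup_j\mathcal{A}_j$ and the closedness via Arzel\`a--Ascoli) are a sensible alternative to the Banach--Mazur machinery, though they still require care near the boundary of the ball $B(p_0,1/j)$ in the limit-containment step.
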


Thanks to Lemma \ref{lemmachiusura} and Baire's theorem, Proposition \ref{p:baire30} implies in particular that $X_\varepsilon\setminus\mathcal{A}$ is dense in $X_\varepsilon$.
Our strategy to prove Proposition \ref{p:baire30} uses the relation between topological properties of sets in the sense of Baire categories and the existence of a winning strategy for a suitable topological game. Let us quickly recall such general result.

\begin{definizione}[Banach-Mazur game]
Let $(X,\mathcal{T})$ be a topological space and let $A\subseteq X$ be an arbitrary subset. The \emph{Banach-Mazur game} associated to $A$ is a game between two players, $P1$ and $P2$ with the following rules: $P1$ chooses arbitrarily a { non-empty} open set $\cU_1\subseteq X$; then $P2$ chooses an open set $\cV_1\subseteq \cU_1$; then $P1$ chooses a { non-empty} open set $\cU_2\subseteq \cV_1$ and so on.
If the set $\left(\bigcap_{i\in\N} \cV_i\right)\cap A$ is non-empty then $P1$ wins. Otherwise $P2$ wins.\label{BMgame}
\end{definizione}

The following proposition relates the Banach-Mazur game to the topology of the space on which it is played. We say that a set is of \emph{first category} if it is contained in a countable union of closed subsets with empty interior. A set is \emph{residual} if its complement is of first category. We say that a certain property holds for the \emph{typical} element of $X$, if it holds for every element of a residual set. We say that $P2$ has a \emph{winning strategy} if, for every choice of open sets $\mathscr U_i$ by $P1$, there exists a choice of open sets $\mathscr V_i$ for $P2$ such that 
$\Big(\bigcap_{i\in\N}\mathscr V_i\Big)\cap A=\emptyset.$

\begin{proposizione}
Suppose the metric space $X$ is complete. Then there exists a winning strategy for $P2$ if and only if $A$ is of first category in $X$.\label{BMgioco}
\end{proposizione}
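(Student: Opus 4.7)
The plan is to prove the two implications separately. The ``if'' direction is a direct construction of a strategy, whereas the ``only if'' direction requires a recursive maximality argument to extract a decreasing sequence of dense open sets whose intersection avoids $A$.

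For the ``if'' direction, assume $A\subseteq\bigcup_{n\in\N}F_n$ with each $F_n$ closed with empty interior. I would specify the strategy for $P2$ as follows: whenever $P1$ plays a non-empty open set $\mathscr{U}_n$, $P2$ responds with a non-empty open set $\mathscr{V}_n\subseteq\mathscr{U}_n\setminus F_n$; such a set exists because $F_n$ is closed with empty interior, so $\mathscr{U}_n\setminus F_n$ is open and non-empty. Then $\bigcap_n\mathscr{V}_n\subseteq X\setminus\bigcup_n F_n\subseteq X\setminus A$, so $P2$ wins.

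For the ``only if'' direction, assume $P2$ has a winning strategy $\sigma$. Using Zorn's lemma, I would construct by induction on $n\in\N$ a maximal pairwise disjoint family $\mathcal{F}_n$ of non-empty open sets with the property that every $\mathscr{V}\in\mathcal{F}_n$ equals $\sigma(\mathscr{U}_1,\mathscr{V}_1,\dots,\mathscr{U}_n)$ for some legal play against $\sigma$ with $\mathscr{V}_i\in\mathcal{F}_i$ and $\mathscr{U}_{i+1}\subseteq\mathscr{V}_i$ for every $i<n$. Setting $\mathscr{G}_n:=\bigcup\mathcal{F}_n$, I would verify that $\mathscr{G}_n$ is dense in $X$ via maximality: given a non-empty open $W\subseteq X$, the inductive density of $\mathscr{G}_{n-1}$ yields a non-empty open $W'\subseteq W\cap\mathscr{V}_{n-1}$ for some $\mathscr{V}_{n-1}\in\mathcal{F}_{n-1}$; playing $W'$ as the $n$-th move of $P1$ produces a response $\sigma(\dots,W')\subseteq W'$, which by maximality of $\mathcal{F}_n$ cannot be disjoint from every element of $\mathcal{F}_n$, so $W\cap\mathscr{G}_n\neq\emptyset$.

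To conclude, fix $x\in\bigcap_n\mathscr{G}_n$. By pairwise disjointness, $x$ lies in a unique $\mathscr{V}_n\in\mathcal{F}_n$ for each $n$, and the containment $\mathscr{V}_n\subseteq\mathscr{V}_{n-1}$ built into the induction forces these sets to form a nested chain arising from a single legal play in which $P2$ follows $\sigma$. Since $\sigma$ is winning, $\left(\bigcap_n\mathscr{V}_n\right)\cap A=\emptyset$, and so $x\notin A$. Hence $A\subseteq\bigcup_n(X\setminus\mathscr{G}_n)$, a countable union of closed sets with empty interior, so $A$ is of first category. The main obstacle is setting up the ``parent-child'' bookkeeping of the families $\mathcal{F}_n$ carefully enough that the nested chain $\mathscr{V}_1\supseteq\mathscr{V}_2\supseteq\cdots$ reconstructed from an arbitrary $x\in\bigcap_n\mathscr{G}_n$ really does correspond to a single legal play, requiring a consistent encoding of the move history attached to each element of $\mathcal{F}_n$. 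Completeness of $X$ is not invoked in either implication; its role is only to make ``of first category'' a genuine smallness notion via Baire's theorem, so that residual sets produced by this dichotomy are non-empty.
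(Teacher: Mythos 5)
Your argument is the standard Banach--Mazur--Oxtoby proof, which is exactly what the paper defers to: the paper gives no proof of its own, instead citing Oxtoby's \emph{Measure and Category} and remarking that the real-line argument carries over verbatim to complete metric spaces. Both of your implications are correct, including the technicality you flag about tagging each member of $\mathcal{F}_n$ with a fixed move history so that the nested chain reconstructed from a point $x\in\bigcap_n\mathscr{G}_n$ really does arise from a single legal play against $\sigma$, and your observation that completeness is not invoked in either implication is also accurate.
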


\begin{proof}
The proof of this result is given in~\cite{Oxtobi} only in the case of the real line. However the same argument works verbatim in any complete metric space.
\end{proof}

\begin{definizione}\label{insiemeA}
Let $A$ be the set of those $ w\in X_\varepsilon$ for which there exists a map $M:\Omega\times \langle e_{m-k+1},\ldots, e_m \rangle\to \R^{n}$ of class $C^{h,\beta}$ and an open set $W\subset\Omega'$ such that $$\pi_\Omega(C_W\cap \mathrm{gr}(M)\cap \mathrm{gr}( w))= W.$$
%we used $U$ already in the definition of $\Omega$
\end{definizione}

The main step for the proof of Proposition \ref{p:baire30} is the following

\begin{proposizione}\label{p:baire1}
The set $A$ is of first category in $X_\varepsilon$.
\end{proposizione}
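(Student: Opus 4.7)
The plan is to realize $A$ as a countable union of closed sets with empty interior, so that $A$ is automatically of first category. For every $n\in\N$ and every $(p,r)\in\mathbb{Q}^{m-k}\times\mathbb{Q}_+$ with $\bar B(p,r)\subset\Omega'$ I set
\[
A_{n,p,r}:=\Big\{w\in X_\varepsilon:\ \exists\, M\in C^{h,\beta},\ \|M\|_{C^{h,\beta}}\leq n,\ w_{>k}(x)=M(x,w_{\leq k}(x))\ \forall x\in\bar B(p,r)\Big\},
\]
where $w_{\leq k}:\Omega\to\R^k$ and $w_{>k}:\Omega\to\R^n$ are the first $k$ and the last $n$ components of $w$. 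Any $w\in A$ comes with some $(M,W)$ as in Definition~\ref{insiemeA}, and $W$ contains some rational ball $\bar B(p,r)$; hence $A=\bigcup_{n,p,r}A_{n,p,r}$, and it suffices to show that each $A_{n,p,r}$ is closed with empty interior in $X_\varepsilon$.

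Closedness follows from a standard Arzelà–Ascoli argument: given $w_j\to w$ in $X_\varepsilon$ with witnesses $M_j$ satisfying $\|M_j\|_{C^{h,\beta}}\leq n$, the strict inequality $\ell+\gamma<h+\beta$ yields a subsequential limit $M_j\to M$ in $C^{\ell,\gamma}$ with $\|M\|_{C^{h,\beta}}\leq n$ (by lower semicontinuity), and passing to the limit in the pointwise identity on $\bar B(p,r)$ gives $w\in A_{n,p,r}$.

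The heart of the matter is empty interior. Fix $w\in A_{n,p,r}$ with witness $M$ and $\delta>0$. I perturb $w$ only in the first component of $w_{>k}$, taking $w'=w+(0,\dots,0,\psi,0,\dots,0)$ with $\psi$ placed in the $(k+1)$-th slot, $\psi\in C^{\ell,\alpha}$ compactly supported in $B(p,r)$, and $\|\psi\|_{C^{\ell,\alpha}}<\delta$. If $w'$ were still in $A_{n,p,r}$ with witness $M'$, subtracting its compositional identity from that of $w$ would force
\[
\psi(x)=N_1(x,w_{\leq k}(x))\quad\text{for every }x\in\bar B(p,r),
\]
where $N:=M'-M$ satisfies $\|N\|_{C^{h,\beta}}\leq 2n$ and $N_1$ is its first component. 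As $N$ ranges over this $C^{h,\beta}$-ball, the right-hand side traces a compact subset $K\subset C^{\ell,\alpha}(\bar B(p,r))$: the ball is compact in $C^{\ell,\gamma}$ (by $\ell+\gamma<h+\beta$ and Arzelà–Ascoli on compact domains), and the linear composition operator $N\mapsto N(\cdot,w_{\leq k}(\cdot))$ is continuous from $C^{\ell,\gamma}$ into $C^{\ell,\alpha}$ by Faà di Bruno, with the fixed inner map $g(x):=(x,w_{\leq k}(x))\in C^{\ell,\alpha}$ and using $\gamma>\alpha$. Since the subspace of $C^{\ell,\alpha}$-functions compactly supported in $B(p,r)$ is infinite-dimensional, the compact set $K$ has empty interior therein, so some admissible $\psi$ exists outside $K$. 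After first moving $w$ slightly into the open ball $\|w-f\|_{C^{\ell,\alpha}}<\varepsilon$ (dense in $X_\varepsilon$), the resulting $w'$ stays in $X_\varepsilon\setminus A_{n,p,r}$ within distance $\delta$ of $w$, as required.

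The main obstacle is justifying the compactness of $K$, which reduces to the compact embedding $C^{h,\beta}\hookrightarrow C^{\ell,\gamma}$ on compact domains and to the continuity of $N\mapsto N\circ g:C^{\ell,\gamma}\to C^{\ell,\alpha}$ for fixed $g\in C^{\ell,\alpha}$. The latter is a direct Faà di Bruno computation, crucially exploiting $\gamma>\alpha$ so that the critical Hölder estimates close; the auxiliary parameter $\gamma$ was precisely introduced to decouple the regularity of the perturbed map ($C^{\ell,\alpha}$) from the regularity of the candidate manifold ($C^{h,\beta}$) through an intermediate compact embedding.
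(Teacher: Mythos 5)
Your proof is correct in spirit and takes a genuinely different route from the paper's. The paper reduces everything to the Banach--Mazur game (Proposition~\ref{BMgioco}) and then proves the key Proposition~\ref{lemma:principale} by an explicit construction: it mollifies $\bar w$, adds the bump $z\mapsto\delta^{1+\gamma}|x_1-z_1|^{\ell+\gamma}$, and runs a Taylor-expansion/order-of-vanishing argument (Lemma~\ref{prop:espansioneChbeta}) showing that a function whose trace along a line vanishes to order exactly $\ell+\gamma\in(\ell+\alpha,h+\beta)$ cannot equal $M_n(x_t,\underline v(x_t))$ with $M\in C^{h,\beta}$ and $\underline v$ smooth. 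You instead exhibit the decomposition $A=\bigcup_{n,p,r}A_{n,p,r}$ directly and show each piece is closed with empty interior via a soft compactness/infinite-dimensionality argument: the set of perturbations $\psi$ compatible with membership in $A_{n,p,r}$ is the continuous image of a ball in $C^{h,\beta}$ under $N\mapsto N_1(\cdot,w_{\leq k}(\cdot))$, hence compact in $C^{\ell,\alpha}$ by the compact embedding $C^{h,\beta}\hookrightarrow C^{\ell,\gamma}$ and Faà di Bruno, whereas the admissible bumps form a non-totally-bounded set. This is a clean, more abstract way of exploiting the same gap $\ell+\alpha<\ell+\gamma<h+\beta$, and it avoids both the mollification and the game machinery. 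Your ``closedness'' and ``decomposition'' steps are fine modulo minor bookkeeping (one should cut $M$ off to a compact set so that $\|M\|_{C^{h,\beta}}$ is finite; and after moving $w$ towards $f$ one must branch on whether the moved function is still in $A_{n,p,r}$).

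There is, however, one real gap: the continuity of $N\mapsto N\circ g:C^{\ell,\gamma}\to C^{\ell,\alpha}$ \emph{fails} when $\ell=0$. In that case $g=(\mathrm{id},w_{\leq k})$ is only $\alpha$-H\"older, and composing a $\gamma$-H\"older function with an $\alpha$-H\"older function produces only an $\alpha\gamma$-H\"older function, so the target $C^{0,\alpha}$ is not reached and the compactness of $K$ in $C^{0,\alpha}$ is not available; indeed, the embedding $C^{0,\alpha}\hookrightarrow C^{0,\alpha\gamma}$ is itself compact, so the infinite-dimensionality argument no longer separates the two sets. This is precisely the difficulty the paper's mollification step is designed to handle (the inner map $\underline v$ is made smooth, so the composed function inherits the full $C^{h,\beta}$ regularity of $M$, independently of $\ell$). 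For $\ell\geq 1$, $g$ is at least Lipschitz and your Faà di Bruno computation goes through (the worst term $D^1N(g)\,D^\ell g$ is a product of a Lipschitz-composed function and an $\alpha$-H\"older one, and the top term $D^\ell N(g)\,(Dg)^\ell$ is $\gamma$-H\"older times Lipschitz). Since Section~\ref{s:main_proofs} only invokes the proposition with $\ell\geq 3$, the gap does not affect the paper's applications, but the statement as written allows $\ell\in\N$ including $\ell=0$, so to cover the full claim you would need to pre-smooth the inner map (e.g.\ by the same localized mollification as in the paper) before running the compactness argument.
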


We postpone the proof to the end of the section and begin with the following elementary lemma.

\begin{lemma}
\label{prop:espansioneChbeta} 
Suppose that $g:\R\to \R$ is a function of class $C^{h,\beta}$. Then, for any $x\in \R$ there exists a $t_0>0$ and a bounded function $r_{h+1}(t):[-t_0,t_0]\to \R$ such that for every $t\in[-t_0,t_0]$ $$g(x+t)=g(x)+d g(x)t+\ldots+\frac{d^h g(x)}{h!}t^h+r_{h+1}(t)t^{h+\beta}.$$
In addition $\lVert r_{h+1}(t)\rVert_\infty\leq \frac{[d^h g(x)]_{C^\beta}}{h!}$.
\end{lemma}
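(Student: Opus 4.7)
My plan is to derive the stated expansion directly from Taylor's theorem with the integral form of the remainder, and then upgrade the remainder estimate using the Hölder continuity of $d^h g$. Since $g$ is globally $C^{h,\beta}$, I can choose $t_0$ to be any positive number (or, if one prefers the local statement, simply $t_0=1$). For $x\in\R$ and $t$ with $|t|\le t_0$, Taylor's theorem applied to the $C^h$ function $g$ yields
\[
g(x+t)=\sum_{j=0}^{h-1}\frac{d^j g(x)}{j!}t^j+\int_0^t\frac{(t-s)^{h-1}}{(h-1)!}\,d^h g(x+s)\,ds.
\]

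The key algebraic step is to add and subtract $d^h g(x)$ inside the integrand. Using the identity $\int_0^t(t-s)^{h-1}/(h-1)!\,ds=t^h/h!$, the piece corresponding to the constant $d^h g(x)$ produces exactly the missing term $\tfrac{d^h g(x)}{h!}t^h$ of the Taylor polynomial, and one is left with
\[
g(x+t)=\sum_{j=0}^{h}\frac{d^j g(x)}{j!}t^j+R(t),\qquad R(t):=\int_0^t\frac{(t-s)^{h-1}}{(h-1)!}\bigl[d^h g(x+s)-d^h g(x)\bigr]ds.
\]

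The second step is to bound $R(t)$. After the change of variable $s=t\tau$, I get $R(t)=t^h\!\int_0^1(1-\tau)^{h-1}/(h-1)!\,[d^h g(x+t\tau)-d^h g(x)]\,d\tau$. Invoking $|d^h g(x+t\tau)-d^h g(x)|\le [d^h g]_{C^\beta}|t\tau|^\beta$ and the crude bound $\tau^\beta\le 1$, I obtain $|R(t)|\le \frac{[d^h g]_{C^\beta}}{h!}|t|^{h+\beta}$. I then define
\[
r_{h+1}(t):=\frac{R(t)}{t^{h+\beta}}\ \text{for}\ t\ne 0,\qquad r_{h+1}(0):=0,
\]
interpreting $t^{h+\beta}=\operatorname{sign}(t)^h|t|^{h+\beta}$ so that the formula in the statement holds for $t$ of either sign. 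By construction $\|r_{h+1}\|_\infty\le [d^h g]_{C^\beta}/h!$, as required.

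There is no genuine obstacle here: the argument is a textbook Taylor estimate, and the only minor subtlety is the sign convention for the fractional power $t^{h+\beta}$ when $t<0$, which is absorbed into $r_{h+1}$ without affecting the stated sup bound. If one only needs the absolute version $|g(x+t)-P_h(t)|\le \frac{[d^h g]_{C^\beta}}{h!}|t|^{h+\beta}$ (where $P_h$ is the Taylor polynomial), then the preceding display is already the conclusion and no sign discussion is needed.
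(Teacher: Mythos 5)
Your proof is correct, but it takes a slightly different route than the paper. The paper uses Taylor's theorem with the \emph{Lagrange} form of the remainder: it writes
\[
g(x+t)-\sum_{j=0}^{h-1}\frac{d^j g(x)}{j!}t^j=\frac{d^h g(x+\zeta_t)}{h!}t^h
\]
for some intermediate point $\zeta_t$ between $0$ and $t$, sets $r_{h+1}(t):=\dfrac{d^h g(x+\zeta_t)-d^h g(x)}{h!\,t^\beta}$, and then bounds $|r_{h+1}(t)|$ by replacing $|t|^\beta$ in the denominator with $\zeta_t^\beta$ (legitimate since $|\zeta_t|\le|t|$), after which the $C^\beta$ seminorm of $d^h g$ gives the desired constant. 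You instead use the \emph{integral} form of the remainder, add and subtract $d^h g(x)$ inside the kernel, change variables $s=t\tau$, and bound the resulting integral directly. Both arrive at the same constant $[d^h g]_{C^\beta}/h!$. Your version is arguably a bit more robust: it avoids introducing the non-constructive mean-value point $\zeta_t$ and sidesteps the (harmless but slightly delicate) division by $\zeta_t^\beta$ when $\zeta_t$ could be $0$; it also makes the sign convention in $t^{h+\beta}$ for $t<0$ explicit, which the paper glosses over. The paper's version is a line shorter. Either approach is perfectly adequate for the way the lemma is used later (namely to expand $M_n(x_t,\underline v(x_t))$ and compare powers of $t$ in the contradiction argument).
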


\begin{proof}
%  We let $\varphi$ be a positive, radial smooth function supported on $[-1,1]$ such that $\varphi\equiv 1$ on $[-1/2,1/2]$ and $\int \varphi=1$. For any $\varepsilon>0$ we further let $\varphi_{\varepsilon}(y):=\varepsilon^{-1}\varphi( y/\varepsilon)$. 
The Taylor expansion of $g$ yields
\begin{equation}
 g(x+t)=g(x)+ dg(x)t+\ldots+\frac{d^{h-1}g(x)}{(h-1)!}t^{h-1}+\frac{d^hg(x+\zeta_t)}{h!}t^{h},
\end{equation}
for some $\zeta_t\in [0,t]$. However, this shows that
\begin{equation}
  g(x+t)-g(x) - dg(x)t - \ldots - \frac{d^{h-1}g(x)}{(h-1)!}t^{h-1} - \frac{d^hg(x)}{h!}t^h=\frac{d^hg(x+\zeta_t)t^h-d^hg(x)t^h}{h!}.
\end{equation}
Define $r_{h+1}(t):=\frac{ d^hg(x+\zeta_t)-d^hg(x)}{h!t ^\beta}$ 
and note that $|r_{h+1}(t)|\leq\frac{|d^hg(x+\zeta_t)-d^hg(x)|}{h!\zeta_t ^\beta}\leq \frac{[d^h g(x)]_{C^\beta}}{h!}$.
\end{proof}

The following proposition provides the main tool to find a winning strategy for the Banach-Mazur game associated to $A$, allowing us to prove Proposition \ref{p:baire1}. We denote the balls in $X_\eps$ by $\cB(w, \rho) = \{ u \in X_\eps: \lVert u-w \rVert_{C^{\ell,\alpha}}< \rho\}$.

\begin{proposizione}\label{lemma:principale}
Let $\bar w\in X_\varepsilon$ be fixed and let $\bar\rho>0$, $j\in\N\setminus\{0\}$. Then, for any $x\in \Omega'$ there exist $u\in X_\varepsilon$ and $\rho>0$ such that 
\begin{itemize}
    \item[(i)] $\cB(u, \rho)\subseteq \cB(\bar w,\bar\rho)$; %\textcolor{blue}{S: good idea, or $\mathscr B$ as $ w$ seems curly too. Also the open sets $U$ I find confusing as $U \subset \Omega$ and $U_j \subset X_\eps$. Maybe we can do $\mathscr U$ as well?}
    \item[(ii)] for every $w\in \cB(u,\rho)$ and $M:\Omega\times \langle e_{m-k+1},\ldots, e_m \rangle\to \R^{n}$ of class $C^{h,\beta}$ with $\lVert M\rVert_{C^{h,\beta}}\leq j$ we have
    \begin{equation}
            \pi_\Omega(\mathrm{gr}(M)\cap \mathrm{gr}( w)\cap C(x, r))\neq B(x, r),
            \label{eq:biginterior}
    \end{equation}
where $r:=\min\{1/j, \mathrm{dist}(x,\Omega\setminus\Omega')\}$.\end{itemize}
\end{proposizione}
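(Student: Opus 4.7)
The hypothesis $\ell+\alpha<h+\beta$ provides a compact embedding of $C^{h,\beta}$-balls into $C^{\ell,\alpha}$, and this compactness is the structural backbone of the argument. My plan is to construct $u=(\bar w_1,\bar w_2+\phi) \in \cB(\bar w,\bar\rho)$, perturbing only the last $n$ components by a function $\phi$ supported in $\Omega'$, chosen so that the restriction $u_2|_{\overline{B(x,r)}}$ avoids the set $\mathcal{K}$ of admissible compositions $M(\cdot,\bar w_1(\cdot))|_{\overline{B(x,r)}}$ associated to $\|M\|_{C^{h,\beta}} \leq j$, and then to pick $\rho$ small so that no $w\in\cB(u,\rho)$ can have its graph lying entirely on the graph of such an $M$ over $B(x,r)$.

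Formally, set $r := \min\{1/j,\,\mathrm{dist}(x,\Omega\setminus\Omega')\}$ and
\[
\mathcal{K} := \bigl\{M(\cdot,\bar w_1(\cdot))\big|_{\overline{B(x,r)}} :\ \|M\|_{C^{h,\beta}} \leq j\bigr\} \subseteq C^{\ell,\alpha}(\overline{B(x,r)},\R^n).
\]
The key step is that $\mathcal{K}$ is compact (hence closed with empty interior) in $C^{\ell,\alpha}(\overline{B(x,r)},\R^n)$. Indeed, the closed ball $\{\|M\|_{C^{h,\beta}} \leq j\}$ is compact in $C^{\ell,\alpha}$ by Arzelà–Ascoli, with equicontinuity of the $\ell$-th derivatives supplied precisely by the $(h+\beta)$-order Taylor expansion of Lemma~\ref{prop:espansioneChbeta}; composition with the fixed $C^{\ell,\alpha}$ curve $y \mapsto (y,\bar w_1(y))$ is a bounded linear operation $C^{h,\beta} \to C^{\ell,\alpha}(\overline{B(x,r)})$ by the chain rule, so the continuous image $\mathcal{K}$ is compact. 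A smooth cutoff $\chi$ supported in $\Omega'$ and identically $1$ on $\overline{B(x,r)}$ provides a bounded right-inverse to the restriction map on the subspace of $C^{\ell,\alpha}(\Omega,\R^n)$ of functions vanishing on $\Omega\setminus\Omega'$, so the image of any $C^{\ell,\alpha}(\Omega)$-neighborhood of $0$ under restriction contains a $C^{\ell,\alpha}(\overline{B(x,r)})$-open set. By the nowhere-denseness of $\mathcal{K}$, I can then select $\phi$ supported in $\Omega'$, sufficiently small in $C^{\ell,\alpha}(\Omega)$-norm to ensure $u := (\bar w_1,\bar w_2+\phi) \in \cB(\bar w,\bar\rho)$, with $(\bar w_2+\phi)|_{\overline{B(x,r)}} \notin \mathcal{K}$; since $\mathcal{K}$ is closed in $C^0$ as well, the quantity
\[
d_0 := \mathrm{dist}_{C^0(\overline{B(x,r)})}\bigl((\bar w_2+\phi)|_{\overline{B(x,r)}},\,\mathcal{K}\bigr)
\]
is strictly positive.

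Finally I take $\rho>0$ small enough that $\cB(u,\rho) \subseteq \cB(\bar w,\bar\rho)$ and $\rho < d_0/(2(1+j))$, which settles~(i). For~(ii), suppose for contradiction that some $w \in \cB(u,\rho)$ and some $M$ with $\|M\|_{C^{h,\beta}} \leq j$ satisfy $w_2(y) = M(y,w_1(y))$ for every $y \in B(x,r)$. Using $u_1=\bar w_1$ together with the elementary Lipschitz bound $\|M(\cdot,w_1)-M(\cdot,\bar w_1)\|_{C^0} \leq \|\partial_z M\|_{\infty}\,\|w_1-\bar w_1\|_{C^0} \leq j\rho$ (valid because $h\geq 1$), I would obtain
\[
\|u_2-M(\cdot,\bar w_1)\|_{C^0(B(x,r))} \leq \|u_2-w_2\|_{C^0} + j\,\|w_1-\bar w_1\|_{C^0} \leq (1+j)\rho < d_0,
\]
contradicting $M(\cdot,\bar w_1) \in \mathcal{K}$ and $\mathrm{dist}_{C^0}(u_2|_{\overline{B(x,r)}},\mathcal{K}) \geq d_0$. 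The main obstacle in this outline is establishing the compactness of $\mathcal{K}$ in $C^{\ell,\alpha}$, where the strict gap $h+\beta > \ell+\alpha$ enters in full strength through Lemma~\ref{prop:espansioneChbeta}; a secondary technical point, handled by the cutoff $\chi$, is arranging the desired pointwise separation on $\overline{B(x,r)}$ via a globally admissible perturbation $\phi$ supported in $\Omega'$.
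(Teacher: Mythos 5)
Your approach is genuinely different from the paper's. You argue \emph{softly}: the set $\mathcal{K}$ of restrictions $M(\cdot,\bar w_1)\big|_{\overline{B(x,r)}}$, as $M$ ranges over the $C^{h,\beta}$-ball of radius $j$, is compact (hence nowhere dense) in $C^{\ell,\alpha}(\overline{B(x,r)})$; a small perturbation $\phi$ of $\bar w_2$ therefore escapes $\mathcal{K}$, and the resulting $C^0$-separation $d_0>0$, combined with the Lipschitz bound $\|\partial_z M\|_\infty\leq j$, forces (ii) for $\rho$ small. The paper argues \emph{quantitatively}: it mollifies $\bar w$ (via $f_\delta$ and $\varphi_\iota$) so that $\underline u=\underline v$ is smooth, superposes an explicit bump $\eta\psi_\delta$ of exact $C^{\ell,\gamma}$-regularity on the last component, and then shows via Taylor expansion (Lemma~\ref{prop:espansioneChbeta}) that the composition $M_n(\cdot,\underline v)$, being $C^{h,\beta}$ along $t\mapsto x+te_1$ precisely because $\underline v$ is smooth, cannot reproduce the $\delta^{1+\gamma}t^{\ell+\gamma}$ behaviour of the bump, since $\ell+\gamma<h+\beta$. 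Your route avoids mollification entirely (you never need $u_1$ smooth) and makes the mechanism transparent --- bounded $C^{h,\beta}$-compositions form a compact, hence meagre, subset of $C^{\ell,\alpha}$ --- at the cost of invoking the compact H\"older embedding, which must be established through a chain-rule argument; the paper's route is more hands-on but essentially self-contained.

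Two technical points need attention. First, when $\lVert\bar w-f\rVert_{C^{\ell,\alpha}}=\varepsilon$, i.e.\ $\bar w$ lies on the boundary of $X_\varepsilon$, adding a bump to $\bar w_2$ alone may push $u$ outside $X_\varepsilon$, and shrinking only the second component need not restore strict inequality (consider $\bar w_2\equiv f_2$, where the whole norm is carried by $\bar w_1-f_1$). You should first replace $\bar w$ by $(1-\delta)\bar w+\delta f$, as the paper does; this also changes $\bar w_1$, which is harmless since you then form $\mathcal{K}$ with the new first component. Second, the assertion that ``the closed ball $\{\|M\|_{C^{h,\beta}}\leq j\}$ is compact in $C^{\ell,\alpha}$'' is false as stated, because the domain $\Omega\times\langle e_{m-k+1},\ldots,e_m\rangle$ is unbounded. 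What you actually need --- and what is true --- is that $M\mapsto M(\cdot,\bar w_1(\cdot))\big|_{\overline{B(x,r)}}$ maps the $C^{h,\beta}$-ball into a precompact subset of $C^{\ell,\alpha}(\overline{B(x,r)})$: the composition only sees $M$ on a compact set, Arzel\`a--Ascoli there yields uniform convergence of $D^pM^i$ for $p\leq h$ along a subsequence, and in the Fa\`a di Bruno expansion of $D^\ell\bigl(M(\cdot,\bar w_1)\bigr)$ each factor $(D^pM)(\cdot,\bar w_1)$ with $p\leq\ell$ carries a H\"older exponent strictly larger than $\alpha$ (Lipschitz if $p<h$; $\beta>\alpha$ if $p=h=\ell$), so uniform convergence of these factors, together with the fixed $C^{0,\alpha}$ polynomial pieces in $D^j\bar w_1$, upgrades to $C^{0,\alpha}$ convergence. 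With these two fixes your argument is sound.
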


% { 
% The idea of the proof of the above proposition is the following. We argue by contradiction, assuming that there exists a function $\bar w$ such that the proposition fails. In the proof we construct $u$, a perturbation of $\bar w$, adding a bump $\psi_\delta$ of class $C^{\ell,\gamma}$, and this essentially \emph{pokes a hole} on the points of the parametrisation where $\mathrm{gr}(u)$ is contained in a $m-k+1$-dimensional $C^{h,\beta}$-regular graph. 

% }

\begin{proof}
% {  (In progress) }
Assume by contradiction that there is an $x\in\Omega'$ such that for every $u\in \cB(\bar w,\bar\rho)$ there is an infinitesimal sequence $\rho_i\leq \bar\rho-\lVert u-\bar w\rVert_{C^{\ell,\alpha}}$ for which property (ii) fails.

In the following we construct a function $u\in X_\varepsilon$, which is a small perturbation of $\bar w$ in the $C^{\ell,\alpha}$ norm, by adding a bump of class $C^{\ell,\gamma}$ (along a specific direction) to a mollified version of $\bar w$ in such a way that the optimal regularity of the perturbation $u$ is $C^{\ell,\gamma}$. Via Taylor expansion, we show that the bound on the $C^{h,\beta}$ norm of $M$ implies that \eqref{eq:biginterior} holds with $u$ in place of $w$. By a simple compactness argument, this implies that (ii) holds for $\rho$ sufficiently small: a contradiction.
%However, locally, where the bump has been added, $u$ shows regulartity behaviour worse than $C^{h,\beta}$ along certain directions. 

Fix $0<\delta<1$ to be chosen later and let $\psi_\delta$ be the function on $\R^{m-k}$ defined by
$$\psi_\delta(z):=\delta^{1+\gamma}|x_1-z_1|^{\ell+\gamma},$$
where $z_i$ are the coordinates of $z\in \R^{m-k}$. Let $r$ be as in (ii). Let $\eta:\Omega\to [0,1]$ be a smooth cutoff function such that $\eta\equiv0$ on $\Omega\setminus B(x,r)$ and $\eta\equiv 1$ on $B(x,r/2)$. Observe that $\eta\psi_\delta \in C^{\ell, \alpha}$ and more precisely \begin{equation}
    \lim_{\delta\to 0}\delta^{-1}\|\eta \psi_\delta\|_{C^{\ell,\alpha}}
    =0\qquad\text{ and }\qquad\lvert \psi_\delta(x+te_1)\rvert
    =\delta^{1+\gamma}\lvert t\rvert^{\ell +\gamma} 
    \qquad\text{for any $\lvert t\rvert \leq r/2$}.
    \label{e:normapsi}
\end{equation}

Throughout the rest of the proof, we let $\varphi$ be a mollification kernel, that is a non-negative, radial smooth function supported on $B(0,1)\subseteq \R^{m-k}$ such that $\varphi\equiv 1$ on $B(0,1/2)$ and $\int \varphi=1$. For any $\iota \in \N \setminus\{0\}$ we further let $\varphi_{\iota}(y):=\iota^{m-k}\varphi(\iota y)$. Denote $f_\delta:=(1-\delta)\bar w+\delta f$ and define
$$v:=\varphi_\iota*(\eta f_\delta)+(1-\eta) f_\delta.$$
Observe that for $\iota$ sufficiently large we have that $f-v\equiv 0$ on $\Omega\setminus\Omega'$. Moreover, the function $v$ is smooth on $B(x,r/2)$. Denote 
$$u:=(v_1, \dots, v_{n+k-1}, v_{n+k}+\eta\psi_\delta).$$
We can estimate
\begin{equation*}
\begin{split}
    \lVert u-f \rVert_{C^{\ell,\alpha}} &\leq \lVert u-v \rVert_{C^{\ell,\alpha}} +\lVert v-f_\delta \rVert_{C^{\ell,\alpha}} + \lVert (1-\delta)(\bar w-f)\rVert_{C^{\ell,\alpha}}\\
    &\leq \|\eta \psi_\delta\|_{C^{\ell,\alpha}}+ \lVert \varphi_\iota*(\eta f_\delta)-\eta f_\delta\rVert_{C^{\ell,\alpha}} +(1-\delta)\eps.
    \end{split}
\end{equation*}
Hence it follows from \eqref{e:normapsi} that for $\delta$ sufficiently small and $\iota$ sufficiently large, $u\in X_\varepsilon$.
Moreover for $\delta$ sufficiently small and $\iota$ sufficiently large we have $u\in \cB(\bar w,\bar\rho/2)$. Indeed, 
\begin{equation*}
\begin{split}
    \lVert u-\bar w \rVert_{C^{\ell,\alpha}}&\leq \lVert u-v \rVert_{C^{\ell,\alpha}} +\lVert v-f_\delta \rVert_{C^{\ell,\alpha}} + \lVert \delta(\bar w-f)\rVert_{C^{{\ell,\alpha}}}\\
     &\leq\|\eta \psi_\delta\|_{C^{{\ell,\alpha}}}+ \lVert \varphi_\iota*(\eta f_\delta)-\eta f_\delta\rVert_{C^{{\ell,\alpha}}}+\delta\varepsilon.
\end{split}
\end{equation*}
By assumption there is a sequence $\rho_i<\bar\rho/2$ with $\rho_i\to 0$, such that there exist $w^i\in\cB(u,\rho_i)$ 
 and $M^i:\Omega\times \langle e_{m-k+1},\ldots, e_m \rangle\to \R^{n}$ of class $C^{h,\beta}$ with $\lVert M^i\rVert_{C^{h,\beta}}\leq j$ for which \eqref{eq:biginterior} fails, that is,
$$\pi_\Omega(\mathrm{gr}(M^i)\cap \mathrm{gr}( w^i)\cap C(x,r))= B(x, r).$$
This means that for any $y \in B(x,r)$ we find $y' \in \R^k$ such that
$$ \big( y, y', M^i(y,y') \big) = \big(y, \underline w^i(y), \underline{\underline w}^i(y) \big) \in \R^{m-k} \times \R^k \times \R^n,\qquad\text{for any $i\in\N$},$$
where we denote $\underline w^i := ( w_1^i, \dots,  w_k^i)$ and $\underline{\underline w}^i:= ( w_{k+1}^i, \dots,  w_{n+k}^i )$.
In particular, for $y=x_t := x+te_1$ with $t\in [0,r/2]$, we have
\[\big(x_t, \underline w^i(x_t), \underline{\underline w}^i(x_t) \big)
    = \big( x_t, \underline w^i(x_t), M^i(x_t, \underline w^i(x_t)) \big),\]
and comparing the last components, we deduce that for every $t\in [0,r/2]$, we have
\begin{equation}\label{e:M_n=omeg}
     w_{n+k}^i(x_t) = M_n^i(x_t, \underline w^i(x_t))\qquad\text{for all }i\in\N.
\end{equation}
Thanks to Arzelà-Ascoli theorem, we can show that there exists $M\in C^{h,\beta}$ with $\lVert M\rVert_{C^{h,\beta}}\leq j$ such that, up to subsequences,  $\lim_{i\to \infty}\lVert M^i-M\rVert_{C^{h,\beta}}=0$. Indeed, up to subsequences, one can find maps $\mathfrak{M}^i_l$ for $l=0,\ldots,h$ such that $D^l M^i$ converges uniformly to $\mathfrak{M}_l$ on $\overline{\Omega}$, the map $\mathfrak{M}_h$ is of class $C^\beta$ and $\sum_{l=1}^N\lVert \mathfrak{M}_l\rVert_\infty+[\mathfrak{M}_h]_{\beta}\leq j$. The fact that $D\mathfrak{M}_l=\mathfrak{M}_{l+1}$ for $l=0,\ldots,h-1$ is an elementary consequence of Lemma \ref{prop:espansioneChbeta} and of the fact that $\lVert M^i\rVert_{C^{h,\beta}}\leq j$.

In addition, since $\rho_i\to 0$ and  $w^i\in\cB(u,\rho_i)$, we also have that $\lim_{i\to\infty}\lVert w^i-u\rVert_{C^1}=0$ and hence, by continuity of all the functions involved, the fact that $w^i\to u$ and $\underline u=\underline v$, \eqref{e:M_n=omeg} implies that
\begin{equation}
u_{n+k}(x_t) = M_n(x_t, \underline u(x_t))=M_n(x_t, \underline v(x_t)),
    \label{id:pippo}
\end{equation}
for every $t\in [0,r/2]$.
On the other hand, using Lemma \ref{prop:espansioneChbeta} and the fact that $M$ is of class $C^{h,\beta}$ and $\underline{u}$ is smooth, we find constants $c_0,\ldots, c_h$ and a function $c_{h+1}(t)$ with $\|c_{h+1}\|_{L^\infty(0,r_\delta)}<C_{h+1}$ such that for every $t\in [0,r/2]$, it holds
\begin{equation}\label{e:taylor2}
M_n(x_t, \underline v(x_t))=c_0+c_1t+\ldots+c_h \frac{t^h}{h!}+c_{h+1}(t)t^{h+\beta}.    
\end{equation}
Observe that $v$ is smooth, hence we can expand it
\begin{equation}\label{e:taylor1}
    v_{n+k}(x_t)=v_{n+k}(x)+\partial_1v_{n+k}(x)t+\ldots+\frac{\partial_1^{h}v_{n+k}(x)}{h!}t^{h}+\frac{\partial_1^{h+1}v_{n+k}(\zeta)}{(h+1)!}t^{h+1},
\end{equation}
for some $\zeta\in [x,x+te_1]$. 

Now we estimate the size of the added bump.
\begin{equation}
\begin{split}\label{e:psidelta}
   \eta(x_t) \psi_\delta(x_t) =& \left( u_{n+k}-v_{n+k}\right)(x_t)\overset{\eqref{id:pippo}}{=}  M_n(x_t,\underline v(x_t))- v_{n+k}(x_t).
\end{split}
\end{equation}
Combining \eqref{e:taylor2}, \eqref{e:taylor1} and \eqref{e:psidelta} we infer that for every $t\in [0,r/2]$ we have
\begin{equation}\label{e:taylor_estimate}
\begin{split}
    \left| \sum_{\kappa=0}^h ( c_\kappa-\partial_1^{\kappa}v_{n+k}(x) )\frac{t^{\kappa}}{\kappa!}
    + c_{h+1}(t)t^{h+\beta}- \frac{\partial_1^{h+1}v_{n+k}(\zeta)}{(h+1)!}t^{h+1}\right|
    &=|M_n(x_t, \underline v(x_t))- v_{n+k}(x_t)|
    \\
     &\overset{\eqref{e:psidelta}}{=} \lvert  \eta(x_t) \psi_\delta(x_t)\rvert\overset{\eqref{e:normapsi}}{=} \delta^{1+\gamma}t^{\ell+\gamma}.
\end{split}
\end{equation}
As $\gamma>0$, we deduce that
\begin{equation} 
    c_\kappa=\partial_1^{\kappa}v_{n+k}(x) \qquad \textnormal{for all} \quad 0\leq \kappa\leq \ell +\gamma.
\label{eq:c=0}
\end{equation}
Moreover, we infer from  \eqref{e:taylor_estimate} that for any $t\in[0,r/2]$, we have
\begin{equation}\label{e:contradiction}
\begin{split}
   \delta^{1+\gamma}t^{\ell+\gamma}
    %&\overset{\eqref{e:normapsi}}{=} \lvert\eta(x_t) \psi_\delta(x_t)  \rvert 
    %=\lvert M_n(x_t,\underline v(x_t))-v_{n+k}(x_t)\rvert\\
    &\overset{\eqref{eq:c=0}}{=} \Bigg\lvert \sum_{\kappa=\lfloor\ell+ \gamma+1\rfloor}^h (c_\kappa-\partial_1^{\kappa}v_{n+k}(x) )\frac{t^{\kappa}}{\kappa!} + c_{h+1}(t)t^{h+\beta}- \frac{\partial_1^{h+1}v_{n+k}(\zeta)}{(h+1)!}t^{h+1} \Bigg\rvert,
 \end{split}
\end{equation}
which is a contradiction to $\ell+\alpha<\ell+\gamma<h+\beta$.
\end{proof}

\begin{proof}[Proof of Proposition \ref{p:baire1}]
Let us prove that $P2$ has a winning strategy for the Banach-Mazur game associated to $A$. 

Let us assume that the players $P1$ and $P2$ have played already $\kappa$ moves which are associated to open sets $\cU_1,\ldots, \cU_\kappa$ and $\cV_1,\ldots, \cV_\kappa$ chosen by $P1$ and $P2$ respectively in such a way that 
$$\cV_\kappa\subseteq \cU_\kappa \subseteq \cV_{\kappa-1} \subseteq \cdots\subseteq \cV_1\subseteq \cU_1.$$
The $(\kappa+1)$th move for $P1$ is an open set $\cU_{\kappa+1}\subseteq \cV_\kappa$. Now we describe how to choose properly the set $\cV_{\kappa+1}$. 

Let us fix a dense sequence $\{x_\iota\}_{\iota\in\N}$ in $\Omega'$. First $P2$ picks some $\bar w\in \cU_{\kappa+1}$ and $\bar\rho>0$ such that $\cB(\bar w,\bar\rho)\subseteq \cU_{\kappa+1}$. 
By Proposition \ref{lemma:principale} applied with these choices of $\bar w$ and $\bar\rho$ and with $x=x_{\kappa+1}$, $j=\kappa+1$ we obtain 
 $u^{\kappa+1}\in \cB(\bar w,\bar\rho)$
 and $0<\rho_{\kappa+1}<1/(\kappa+1)$ such that 
\begin{itemize}
    \item[(i)] $\cB(u^{\kappa+1}, \rho_{\kappa+1})\subseteq \cU_{\kappa+1}$;
    \item[(ii)] for every $w\in \cB(u^{\kappa+1},\rho_{\kappa+1})$ and $M:\Omega\times \langle e_{m-k+1},\ldots, e_m \rangle\to \R^{n}$ of class $C^{h,\beta}$ with $\lVert M\rVert_{C^{h,\beta}}\leq \kappa+1$ we have
    $$\pi_\Omega(\mathrm{gr}(M)\cap \mathrm{gr}( w)\cap C(x, r))\neq B(x, r),$$
where $r:=\min\{1/(\kappa+1), \mathrm{dist}(x,\Omega\setminus\Omega')\}$.
\end{itemize} 
Note that with the choice $\cV_{\kappa+1}:=\cB(u^{\kappa+1},\rho_{\kappa+1})$ there exists $w_\infty \in X_\eps$ such that
$\{w_\infty\}=\bigcap_{j\in\N} \cV_j$. Let us show that $ w_\infty\not \in A$. Let $U\subseteq\Omega'$ be an open set and pick $\iota\in\N$ such that $B(x_\iota,1/\iota)$ is compactly contained in $U$. Since $ w_\infty\in \cap_{\kappa\in\N} \cV_\kappa$, in particular $ w_\infty\in \cV_\iota$ and we can deduce that $$\pi_\Omega(\mathrm{gr}(M)\cap \mathrm{gr}( w_\infty)\cap C(x_\iota,1/\iota ))\neq B(x_\iota,1/\iota),$$ for every $M:\Omega\times \langle e_{m-k+1},\ldots, e_m \rangle\to \R^{n}$ with $\lVert M\rVert_{C^{h,\beta}}\leq \iota$. Thanks to the arbitrariness of $U$ and since $\iota$ can be chosen arbitrarily large, we conclude that $ w_\infty\not\in A$. Hence this is a winning strategy for $P2$ and this concludes the proof. 
\end{proof}

\begin{proof}[Proof of Proposition \ref{p:baire30}]
Let $w\in \mathcal{A}$ and let $O$ and $\mathcal{N}$ be as in Definition \ref{def:bordibrutti}. Let $p\in \mathrm{gr}(w)\cap O$.  We claim that there exists a ball $B\subseteq O$ centred at $p$ such that the manifold $\mathcal{N}$ inside the ball $B$ coincides with the graph of a map $N:V\to V^\perp$ of class $C^{h,\beta}$, where $V$ is an $m$-dimensional coordinate plane in $\R^{m+n}=\langle e_1,\ldots, e_{m+n}\rangle$. This is due to the implicit function theorem and to the fact that the tangent of $\mathcal{N}$ at $p$ must be a graph with respect to one of the coordinate planes.

Furthermore, it is also clear that $V$ must contain $\langle e_1,\ldots, e_{m-k}\rangle$. This is due to the fact that otherwise $\mathrm{gr}(w\trace\Omega')$ and $\mathrm{gr} (N)$ would be transversal. In particular $\Omega\subseteq V$. For any $m$-dimensional coordinate plane denote with $A_V$ the subset of $X_\varepsilon$ obtained replacing $\langle e_1,\ldots,e_{m}\rangle $ with $V$ in Definition \ref{insiemeA}. Note that the above discussion implies that
$\mathcal A\subseteq \cup_V A_V$, where the union is taken on the coordinate $m$-dimensional planes in $\R^{m+n}$, and thus $\mathcal A$ is of first category by Proposition \ref{p:baire1}. 
\end{proof}

\section{Proof of the main results}\label{s:main_proofs}

Given $\mathcal{M}, \Gamma$ as in Section \ref{s:intro} and $T \in \AMC(b)$ with $b = \llbracket \Gamma \rrbracket$, we recall that a point $p \in \Gamma$ is a \textit{regular boundary point} for $T$ if there exist a neighborhood $W$ of $p$ and an embedded $m$-dimensional submanifold $\Sigma \subset W \cap \mathcal{M}$  of class $C^{1,s}$ for some $s>0$ and without boundary in $W$, such that $\operatorname{supp}(T) \cap W \subset \Sigma$. 
The set of regular boundary points is denoted by $\Regb(T)$ and its complement in $\Gamma$ will be denoted by $\Singb(T)$.

Let $p \in \Regb(T)$. Up to restrictions of $W$ so that $W \cap \Sigma$ is diffeomorphic to an $m$-dimensional ball, there exists a positive integer $Q$ (called \textit{multiplicity}) such that $T\res W=Q \llbracket \Sigma^{+} \rrbracket+(Q-1) \llbracket \Sigma^{-} \rrbracket,$ where $\Sigma^{+}$ and $\Sigma^{-}$ are the two disjoint regular submanifolds of $W$ divided by $\Gamma \cap W$ and with boundaries $\Gamma$ and $-\Gamma$, respectively.
We define the \textit{density} of a regular boundary point $p$ in $\Gamma \cap W$ as $\Theta(T, p):=Q-1/2$. This definition is equivalent to the definition of density of $T$ at every regular boundary point $p$ as $$
\Theta(T, p):=\lim _{r \rightarrow 0} \frac{\|T\|(B(p,r))}{\omega_m r^m},
$$
where the numerator and the denominator represent respectively the mass of the current in a ball of radius $r$ and the $m$-dimensional volume of an $m$-dimensional ball of radius $r$.
Regular boundary points where $Q=1$ are called \textit{one-sided} boundary points. Regular boundary points where $Q>1$ are called \textit{two-sided}. The main result of \cite{Borda} is that, assuming $\mathcal{M}, \Gamma$ and $T$ as above, $\Regb(T)$ is (open and) dense in $\Gamma$.

%{\color{purple} Let $p \in \Regb(T)$. The \textit{density} of $T$ at every regular boundary point $p$ in $\Gamma \cap U$ is defined as $$ \Theta(T, p):=\lim _{r \rightarrow 0} \frac{\|T\|(B(p,r))}{\omega_m r^m}, $$ where the numerator and the denominator represent respectively the mass of the current in a ball of radius $r$ and the $m$-dimensional volume of an $m$-dimensional ball of radius $r$. Regular boundary points where $\Theta(T, p)=1/2$ are called \textit{one-sided} boundary points. Regular boundary points where $\Theta(T, p)>1/2$ are called \textit{two-sided}. The main result of \cite{Borda} is that, assuming $\mathcal{M}, \Gamma$ and $T$ as above, $\Regb(T)$ is dense in $\Gamma$.} 
Analogously, we say that $p \in \operatorname{supp}(T) \setminus \Gamma$ is an \emph{interior regular point} if there is a positive radius $\overline{r}>0$, a regular embedded submanifold $\Sigma \subset \mathcal{M}$ and a positive integer $Q$ such that $T \res B(x_0,\overline{r})=Q \llbracket \Sigma \rrbracket$.
The set of interior regular points, which is relatively open in $\operatorname{supp}(T) \setminus \Gamma$, is denoted by $\Regi(T)$.

\subsection{Proof of Theorem \ref{t:main}}
Observe that for $N=1$ the conclusion of Theorem \ref{t:main} holds for \emph{every} $b\in\mathcal{B}_\eta$, due to \cite[Theorem 2.1]{Borda}. For $N>1$ and every $i=1,\dots,N$ we consider the set $X_{\eta}(p_i)$ and we define the corresponding set $\mathcal{A}_i$ as in Definition \ref{def:bordibrutti}. By Proposition \ref{p:baire30} we have that $\mathcal{A}_i$ is a set of first category in $X_{\eta}(p_i)$ so that $\mathscr{R}:=\prod_{i=1}^N (X_\eta(p_i)\setminus\mathcal{A}_i)$ is a residual set in ${\mathbf{X}}_\eta$, see \eqref{e:defX}. Since the map $\Psi$ defined in \eqref{e:defPSI} is an isometry, then $\Psi(\mathscr{R})$ is a residual set in $\mathcal{B}_\eta$, see \eqref{e:defB}.
Moreover, for every $i=1,\dots, N$ and for every $u\in X_{\eta}(p_i)\setminus\mathcal{A}_i$ the following property holds: for every open set $W\subset\Phii_i(gr(u)|_{\Omega'_i})\subset\mathcal{M}$ and for every $m$-dimensional submanifold $\mathcal{N}\subset\mathcal{M}$ of class $C^{h,\beta}$ such that $\partial\mathcal{N}\cap W=\emptyset$ we have \begin{equation}\label{e:boh} W\cap\Phii_i(gr(u))\not\subset\mathcal{N}.\end{equation}
Now consider $b\in\Psi(\mathscr{R})$ and assume by contradiction that there exists an area-minimizing integral current $T$ with $\partial T=b$ which does not satisfy the conclusion of Theorem \ref{t:main}. By \cite[Theorem 1.6 and Theorem 2.1]{Borda}, the open and dense set of regular boundary points of $T$ contains at least a two-sided point $p$. By \cite[Theorem 2.1]{Borda} the dense set of regular points in the connected component of $\mathrm{supp}(b)$ containing $p$ consists of two-sided points. This contradicts \eqref{e:boh} because for any two-sided point $p$, then supp$(T)$ must be contained in a $C^{h,\beta}$ submanifold, locally around $p$. \qed

\subsection{Proof of Corollary \ref{t:codim1}}
By \cite[Theorem 9.1]{steinbruechel}, for every area-minimizing integral current $T$ with $\partial T=b$ every point $P\in$ supp$(b)$ is regular. As in the proof of Theorem \ref{t:main}, for every $b\in\Psi(\mathscr{R})$ there are no two-sided regular points, which implies that every point of $b$ has density $1/2$. \qed

\subsection{Proof of Theorem \ref{c:main}}

%\begin{lemma}\label{l:bestfilling}
%There exists a constant $C$ (depending on $\mathcal{M}$) such that 
%$$\min\{\Mass(S):S\in\mathscr{I}_m(\mathcal{M}):\partial S=b-\bar b\}\leq \Flat(b-\bar b)$$
%for every $b,\bar b\in\mathcal{B}$.
%\end{lemma}

%\begin{proof}
 %   We recall that by \eqref{e:homotopy_estimate} we have $\Flat(b-\bar b) \leq C\eps$, hence we can apply \cite[Lemma 3.4]{inauenMarchese}.
    % Observe that as $d(b,\bar b) < \eps$, the flat homotopy $T$ between $b$ and $\bar b$ is a competitor for the LHS (and for the flat norm) and has norm $\leq C \eps$.
    % Find currents $R_i, S_i$ such that
    % \[ b-\bar b = R_i + \partial S_i 
    % \qquad \textnormal{ and } \qquad
    % \Mass(R_i) + \Mass(S_i) \to \Flat(b-\bar b).\]
    % Then we estimate
    % \begin{equation}\label{e:MassRi}
    %     \limsup_i \Mass(R_i) \leq \Flat(b-\bar b) \leq \Mass(T) \leq C \eps.
    % \end{equation}
    % Observe that $\partial R_i =0$ for every $i$, so that we can find $Q_i \in \AMC(R_i)$. Then we use the isoperimetric inequality on manifolds \textcolor{red}{find manifold version reference} and \eqref{e:MassRi} to deduce
    % \[ \Mass(Q_i) \leq C \Mass(R_i)^{1/(m-1)} \Mass(R_i) \leq \frac 12 \Mass(R_i).\]
    % But then we observe that $\partial Q_i + \partial S_i = R_i + \partial S_i = b-\bar b$ and thus
    % \begin{equation*}
    %     \Flat(b- \bar b) \leq \limsup_i \Mass(Q_i+S_i) 
    %     \leq \limsup_i \frac12 \Mass(R_i) + \Mass(S_i) \leq \Flat(b- \bar b).
    % \end{equation*}
    % Hence, 
    % \[ \inf\{ \Mass(S): \partial S = b-\bar b\} 
    % \leq \limsup_i \Mass(Q_i+S_i) \leq  \Flat(b- \bar b).\]
%\end{proof}

Consider the subset of $\mathcal{B}_\eta$ of those boundaries admitting more than one minimizer:
\begin{equation}\label{d:nonuniqueminimizers} NU:=\{b \in \mathcal{B}_\eta : \text{ there exist } T^1, T^2 \in \AMC(b) \text{ such that } T^1 \neq T^2\}.\end{equation}
We aim to prove that $NU$ is a set of first category in $\mathcal{B}_\eta$. The following lemma shows that it is sufficient to prove that $\mathcal{B}_\eta\setminus NU$ is dense. A similar strategy is adopted in \cite{genericuniqbranchedtransport}.

\begin{lemma}\label{l:denseimpliesresidual}
There exists a constant $\eta_0= \eta_0(\mathcal{M}) >0$ such that if the parameter $\eta$ in \eqref{e:defeta} is smaller than $\eta_0$ the following property holds: if the set $\mathcal{B}_\eta \setminus NU$ is dense in $(\mathcal{B}_\eta, d)$, then it is residual.
\end{lemma}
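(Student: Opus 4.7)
The plan is to write $NU$ as a countable union of closed sets in $(\mathcal B_\eta,d)$; density of $\mathcal B_\eta\setminus NU$ then forces each of these closed sets to have empty interior, yielding that $NU$ is of first category. Quantify non-uniqueness via the integral flat distance by setting, for each $k\in\N$,
\begin{equation*}
NU_k:=\{b\in\mathcal B_\eta:\ \exists\,T^1,T^2\in\AMC(b)\ \text{with}\ \Flat(T^1-T^2)\ge 1/k\},
\end{equation*}
so that $NU=\bigcup_{k\in\N}NU_k$. The main content of the lemma is the closedness of each $NU_k$.

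The first step in proving closedness is a uniform mass bound. Given $b\in\mathcal B_\eta$ and a sequence $b_j\to b$ in $(\mathcal B_\eta,d)$ with $b_j\in NU_k$, Lemma \ref{l:flatequivalence} yields $\Flat(b_j-b)\to 0$; more strongly, inspection of its proof provides currents $S_j\in\mathscr I_m(\mathcal M)$ with $\partial S_j=b_j-b$ and $\Mass(S_j)\to 0$. Fixing any $T^*\in\AMC(b)$, the competitor $T^*+S_j$ shows that $\min\{\Mass(T):\partial T=b_j\}\le\Mass(T^*)+\Mass(S_j)$. This is where the assumption $\eta<\eta_0$ enters: choosing $\eta_0$ depending only on $\mathcal{M}$ (and in particular on the Lipschitz retraction constant and on a single fixed $T^*\in\AMC(b_0)$) ensures a uniform bound $\Mass(T^i_j)\le C$ for every $b_j\in\mathcal B_\eta$ and every $T^i_j\in\AMC(b_j)$. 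Combining with the uniform bound on $\Mass(\partial T^i_j)=\Mass(b_j)$, the Federer--Fleming compactness theorem yields, up to a subsequence, flat limits $T^i_j\to T^i$ with $T^i\in\mathscr I_m(\mathcal M)$ and $\partial T^i=b$.

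Next, the pair $(T^1,T^2)$ must belong to $\AMC(b)$. Lower semicontinuity of mass under flat convergence gives $\Mass(T^i)\le\liminf_j\Mass(T^i_j)\le\limsup_j\min_{\partial T=b_j}\Mass(T)\le\Mass(T^*)$ by the competitor construction above; hence $T^i\in\AMC(b)$. Finally, since $\Flat$ is continuous under flat convergence with uniform mass bounds, $\Flat(T^1-T^2)=\lim_j\Flat(T^1_j-T^2_j)\ge 1/k$, proving $b\in NU_k$ and hence that $NU_k$ is closed in $(\mathcal B_\eta,d)$. If $\mathcal B_\eta\setminus NU$ is dense then \emph{a fortiori} $\mathcal B_\eta\setminus NU_k$ is dense for each $k$, so each closed set $NU_k$ has empty interior. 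Therefore $NU=\bigcup_k NU_k$ is of first category and its complement is residual, as claimed.

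The main obstacle I expect to navigate carefully is the uniform mass bound on area-minimizers with boundary varying in $\mathcal B_\eta$: this is where the technical constraint $\eta<\eta_0=\eta_0(\mathcal M)$ is used, since without such a smallness assumption the competitor $T^*+S_j$ may leave the ambient manifold or fail to have controlled mass. Everything else reduces to standard compactness and semicontinuity properties of integral currents.
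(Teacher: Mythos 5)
Your overall strategy coincides with the paper's: write $NU=\bigcup_k NU_k$ with $NU_k$ the boundaries admitting two minimizers at flat distance $\ge 1/k$, prove each $NU_k$ is closed via a compactness argument, and conclude by Baire. The mass bound via a filling $S_j$ of $b-b_j$ and the identification of the limits as minimizers (your semicontinuity argument replaces the paper's citation of \cite[Theorem 34.5]{Simon1983LecturesTheory}, which is fine) are also as in the paper.

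There is, however, one genuine gap. You invoke the Federer--Fleming compactness theorem for the sequences $T^i_j$ using only the uniform bounds on $\Mass(T^i_j)$ and $\Mass(\partial T^i_j)$. Since $\mathcal{M}$ is only assumed to be a \emph{complete} manifold (e.g.\ $\mathcal{M}=\R^{m+n}$ is allowed and there is no convexity hypothesis), the compactness theorem additionally requires the currents to be supported in a single compact set, and this is not automatic: a priori the minimizers $T^i_j$ could have mass escaping to infinity. The paper devotes the entire final paragraph of its proof to exactly this claim: assuming $p_j\in\supp(T_j)$ unbounded, it combines the interior density lower bound $\Mass(T_j\res B_r(p_j))\ge Dr^m$ with a slicing argument (choosing a radius $\rho$ at which the slice $\langle T_j,d_p,\rho\rangle$ has small mass) and the isoperimetric-type filling lemma \cite[Lemma 3.4]{inauenMarchese} to build a competitor of strictly smaller mass, a contradiction. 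Your proposal omits this step entirely, and no soft argument replaces it. Relatedly, you locate the role of $\eta<\eta_0$ in the competitor construction $T^*+S_j$; but the filling of $b-b_j$ can be obtained inside $\mathcal{M}$ directly from the homotopy construction of Lemma \ref{l:flatequivalence} (as you note), with mass controlled by $d(b,b_j)$, and needs no smallness of $\eta$. The smallness threshold is tied to the applicability of the abstract filling lemma in $\mathcal{M}$, which is precisely the tool needed in the missing compact-support step, where no graph homotopy is available.
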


\begin{proof}
For every $\m \in \N \setminus \{0\}$, consider the sets 
$$NU_\m:=\{b\in \mathcal{B}_\eta: \text{ there exist } T^1,T^2\in\AMC(b) \text{ with } \mathbb{F}(T^2-T^1)\geq \m^{-1}\}.$$
Since $NU_\m\subset NU$, then $(\mathcal{B}_\eta\setminus NU_\m)\supset(\mathcal{B}_\eta\setminus NU)$ and hence, by assumption, $\mathcal{B}_\eta\setminus NU_\m$ is dense in $\mathcal{B}_\eta$ for every $\m$. Therefore $NU_\m$ has empty interior in $\mathcal{B}_\eta$ for every $\m$. We conclude by proving that $NU_\m$ is closed for every $\m$. 

Fix $\m$ and consider a sequence $b_j$ of elements of $NU_\m$ and let $b$ be such that $d(b_j,b) \to 0$. Since $\mathcal{B}_\eta$ is complete, see Lemma \ref{lemmachiusura}, we deduce that $b\in \mathcal{B}_\eta$. By Lemma \ref{l:flatequivalence} we deduce that $\Flat (b_j-b)\to 0$. Observe that, denoting $u(b_j)=(u^j_1,\dots,u^j_N)$, we have
\begin{equation}\label{e:MB}
    \Mass(b_j)\leq\Mass(b_0\res (\mathcal{M}\setminus\bigcup_{i=1}^N \Phii_i(U_i)))+\sum_{i=1}^N\Mass((\textbf{u}_i^j)_\sharp\llbracket \Omega_i \rrbracket)
    \leq C+\sum_{i=1}^N\text{Lip}(\textbf{u}_i^j)^{m-1}\Haus^{m-1}(\Omega_i),
\end{equation}
where we recall that $\textbf{u}_i^j$ are defined in \eqref{e:defbold}. Therefore the masses of $b_j$ are equibounded because $$\text{Lip} (\textbf{u}_i^j)\leq \text{Lip}(f_i)+\|u_i^j-f_i\|_{C^1}\leq \text{Lip}(f_i)+\varepsilon_i.$$
For every $j\in\N$, take 
$$T_j,\bar T_j\in\AMC(b_j)\quad {\mbox{ with }}\quad \mathbb{F}(T_j-\bar T_j)\geq \m^{-1}.$$
Let $T\in\AMC(b)$ and observe that by \cite[Lemma 3.4]{inauenMarchese}, if the parameter $\eta$ defined in \eqref{e:defeta} is smaller than a constant $\eta_0$ depending only on $\mathcal{M}$, for every $j$ sufficiently large there exists $S_j\in{ \mathscr{I}_{m}(\mathcal{M})}$ such that { $b-b_j=\partial S_j$ and $\Mass(S_j)\leq C\, \Flat(b-b_j)$}. Therefore { a competitor for $T_j$ and for $\bar{T}_j$ is $T-S_j$, so that we can} estimate
\begin{equation}\label{e:MC}
    \Mass(T_j)\leq\Mass(T)+C\, \Flat(b-b_j), \qquad \Mass(\bar{T}_j)\leq\Mass(T)+C\, \Flat(b-b_j)
\end{equation}
and so the masses of $T_j$ and $\bar T_j$ are equibounded. Moreover { we claim that $\text{supp}(T_j)$ and $\text{supp}(\bar T_j)$ are contained in a unique compact set $K\subset \mathcal{M}$ for $j$ sufficiently large. We postpone the proof of this claim for the moment.} Combining \eqref{e:MB} and \eqref{e:MC} we deduce from the compactness theorem \cite[\S 4.2.17]{Federer1996GeometricTheory} that there exist integral currents 
${ T_\infty,\bar T_\infty}\in{\mathscr{I}}_{m}(K),$ such that $\partial T_\infty=\partial \bar{T}_\infty=b$ and, up to subsequences, $\mathbb{F}(T_j-T_\infty)\to 0$, $\mathbb{F}(\bar{T}_j-\bar{T}_\infty)\to 0$. Clearly $\mathbb{F}(\bar{T}_\infty-T_\infty)\geq 1/\m$. By \cite[Theorem 34.5]{Simon1983LecturesTheory}, we have $T_\infty, \bar T_\infty\in\AMC(b)$, hence $b\in NU_\m$.

{  Let us prove the claim only for the currents $T_j$, the proof for $\bar T_j$ being identical. Assume by contradiction that, up to passing to a non-relabeled subsequence, there are points $p_j\in{\textrm{supp}}(T_j)$ such that the set $\{p_j:j\in\N\}$ is not bounded and fix $p\in\Gamma$. 
By the density lower bound \cite[Lemma 2.1]{DS}, there exists $D>0, r>0$ such that
\begin{equation}\label{e:dlb}
\Mass(T_j\res B_r(p_j))\geq D r^m, 
\end{equation}
for every $j\in\N$. We will show that this contradicts the minimality of $T_j$, for $j$ large enough.

By \cite[Lemma 28.5]{Simon1983LecturesTheory}, the slices $\langle T_j,d_p,t\rangle$ of $T_j$ with respect to the distance function $d_p(x):=\text{dist}(x,p)$ satisfy 
\[ \langle T_j,d_p,t\rangle = \partial(T_j\res B_{t}(p))-\partial T_j\res B_{t}(p).\]
In particular, since $\partial T_j$ are all supported in a small tubular neighbourhood of $\Gamma$, there exists $t_0>0$ such that the last addendum coincides with $b_j$ for all $j$, if $t\geq t_0$. 

Since $d_p(p_j)$ is not bounded and the masses of $T_j$ are equibounded, then, by \cite[Lemma 28.5 (1)]{Simon1983LecturesTheory}, there exist $j\in\N$ and $t_0<\rho<d_p(p_j)-r$ such that, denoting $\varepsilon_0$ the parameter in \cite[Lemma 3.4]{inauenMarchese}, $$\Flat(\langle T_j,d_p,\rho\rangle)\leq\Mass(\langle T_j,d_p,\rho\rangle)\leq\min\left\{\varepsilon_0, \frac{D}{2}r^m\right\}.$$

By \cite[Lemma 3.4]{inauenMarchese}, there exists an integral $m$-current $Z_j$ such that $\partial Z_j=\langle T_j,d_p,\rho\rangle=\partial(T_j\res B_{\rho}(p))-b_j$ and moreover
\begin{equation}\label{e:compet}
    \Mass(Z_j)=\Flat(\langle T_j,d_p,\rho\rangle)=\Flat(\partial(T_j\res B_{\rho}(p))-b_j)\leq\frac{D}{2}r^m
\end{equation}

The current $R_j:=T_j\res B_{\rho}(p)-Z_j$ has boundary $b_j$ and, using \eqref{e:dlb} and \eqref{e:compet} we deduce the contradiction $\Mass(R_j)<\Mass(T_j)$.
}
\end{proof}

\begin{proposizione}\label{p:densityfinale}
The set $\mathcal{B}_\eta \setminus NU$ is dense in $\mathcal{B}_\eta$.    
\end{proposizione}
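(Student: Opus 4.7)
The plan is to adapt the classical Morgan strategy \cite{Morganinventiones}. Fix $b \in \mathcal{B}_\eta$ and $\varepsilon > 0$; the goal is to construct $b^\ast \in \mathcal{B}_\eta$ with $d(b, b^\ast) < \varepsilon$ and $\AMC(b^\ast)$ a singleton, by producing such a $b^\ast$ inside a $1$-parameter family through $b$. Writing $(u_1, \dots, u_N) := \Psi^{-1}(b) \in \mathbf{X}_\eta$, I would fix any admissible direction $v = (v_1, \dots, v_N)$, meaning each $v_i \in C^{\ell,\alpha}(\Omega_i, \R^{n+1})$ is compactly supported in $\Omega'_i$ and $\|v\| = 1$, and set $b_s := \Psi(u_1 + sv_1, \dots, u_N + sv_N)$ for $s \in [0, s_0]$ with $s_0 \in (0, \varepsilon)$ small enough to guarantee $(u_i + sv_i)_i \in \mathbf{X}_\eta$ throughout. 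Then $b_s \in \mathcal{B}_\eta$ and $d(b, b_s) = s$.

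The first step is to show that the minimum-mass function $M(s) := \Mass(T_s)$, for any $T_s \in \AMC(b_s)$, is Lipschitz on $[0, s_0]$. The tool is a smooth ambient isotopy $\Phi^s \colon \mathcal{M} \to \mathcal{M}$, supported in a tubular neighbourhood of $\mathrm{supp}(b)$, that interpolates the graph deformations $gr(u_i) \mapsto gr(u_i + sv_i)$ via cutoff and extends to the identity elsewhere, so that $(\Phi^s)_\sharp b = b_s$. Push-forwards through $\Phi^s \circ (\Phi^{s'})^{-1}$ turn any minimizer of $b_{s'}$ into a competitor for $b_s$, yielding $|M(s) - M(s')| \leq C|s - s'|$; by Rademacher's theorem, $M$ is then differentiable at a.e.\ $s \in (0, s_0)$.

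The hard step, which is the main obstacle, is to prove that each differentiability point $s^\ast$ of $M$ satisfies $b_{s^\ast} \notin NU$. For any $T \in \AMC(b_{s^\ast})$, the map $s \mapsto \Mass\bigl((\Phi^s \circ (\Phi^{s^\ast})^{-1})_\sharp T\bigr)$ is smooth, dominates $M$, and equals $M(s^\ast)$ at $s^\ast$; comparing one-sided derivatives forces its slope at $s^\ast$ to equal $M'(s^\ast)$, so every element of $\AMC(b_{s^\ast})$ has the same first variation of mass in the direction of the initial velocity field of $\Phi^s$. Repeating the construction for a countable dense family of admissible directions $v^{(k)}$, each localized near a different boundary point, and intersecting the associated full-measure sets of differentiability yields a dense $\Sigma \subset (0, s_0)$ at which every pair of minimizers has matching first variation against a dense class of vector fields localized near the boundary. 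The boundary regularity of \cite[Theorem 2.1]{Borda} guarantees that the set of regular boundary points is open dense in $\Gamma$ and that near each such point every minimizer is a sum of $C^{1,s}$ sheets; combined with the matching of localized first variations, this local structure forces the sheets of any two minimizers to coincide in a neighbourhood of each regular boundary point, and standard unique-continuation arguments for area-minimizing currents then propagate the equality globally. Any $s^\ast \in \Sigma$ therefore gives $b^\ast := b_{s^\ast} \in \mathcal{B}_\eta \setminus NU$ with $d(b, b^\ast) < \varepsilon$, completing the density argument. The crux is precisely this rigidity step, in which the boundary regularity of \cite{Borda} takes the place of Morgan's uniform-convexity assumption.
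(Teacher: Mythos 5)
Your proposal follows Morgan's \emph{Inventiones} route (differentiate the minimal-mass function along a one-parameter family of boundaries), whereas the paper follows the \emph{Indiana} route: it first uses Proposition \ref{p:baire30} to replace $b$ by a nearby $\tilde b\in\Psi(\mathbf{X}_\eta\setminus\mathcal A)$ whose minimizers have \emph{no two-sided} regular boundary points, then pushes the boundary locally \emph{into the support of a chosen minimizer} $T$, so that the cut-off current $\hat T=T-\sum_i T_i'$ is a minimizer for the new boundary $\hat b$ that shares an open piece of surface with any competitor; interior unique continuation and the connectivity of $\Regi$ from \cite[Theorem 2.1]{Borda} then force uniqueness. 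The difference matters, because your route has two genuine gaps in exactly the setting of this paper.

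First, the rigidity step ``matching first variations $\Rightarrow$ coincidence of minimizers'' is asserted but not available here. To convert the identity $\delta T_1(X)=\delta T_2(X)$ into agreement of the surfaces near the boundary one must express the first variation as a boundary integral of the conormal, $\delta T(X)=\int_\Gamma\langle X,\nu_T\rangle$, and then invoke unique continuation from Cauchy data. This integration by parts requires boundary regularity at $\|b\|$-a.e.\ point of $\Gamma$, which in arbitrary codimension without convexity is precisely what is \emph{not} known: \cite{Borda} gives only an open \emph{dense} set of regular boundary points, whose complement may have positive measure and may differ between $T_1$ and $T_2$. Moreover, at two-sided points the ``conormal'' is the combination $Q\nu^++(Q-1)\nu^-$, which does not determine the two sheets; you never rule two-sided points out, while the paper does so generically via Proposition \ref{p:baire30} and then needs only \emph{one} one-sided regular point per connected component. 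Second, the step ``repeat for a countable dense family of directions $v^{(k)}$ and intersect the full-measure sets of differentiability'' is structurally ill-posed: each direction $v^{(k)}$ generates a \emph{different} one-parameter family $s\mapsto\Psi(u+sv^{(k)})$, and the corresponding a.e.-differentiability sets live on different segments, so there is nothing to intersect. Getting a single parameter value at which the minimal mass is differentiable in all directions simultaneously requires a multi-parameter Fubini argument (this is why Morgan works with a measure on a parameter space rather than with Baire category), which you have not set up. A minor additional point: if $\|f-u\|_{C^{\ell,\alpha}}=\eta$ exactly, then $u+sv$ may leave $\mathbf X_\eta$ for every $s>0$; the paper creates room by first contracting $u$ towards $f$ by a factor $1-\mu$.
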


\begin{proof}
    Fix $0<\mu<1$. Let $b\in \mathcal {B}_\eta$ and take $(u_1,\dots,u_N)\in {\mathbf{X}}_\eta$ such that $b = \Psi(u_1,\dots,u_N)$, see \eqref{e:defPSI}. Consider $$(w_1,\dots,w_N):=(1-\mu)(u_1,\dots,u_N)-\mu(f_1,\dots,f_N)$$
    and observe that $(w_1,\dots,w_N)\in{\mathbf{X}}_{(1-\mu)\eta}$.
    By Proposition \ref{p:baire30}, there is $(\tilde w_1,\dots,\tilde w_N) \in {\mathbf{X}}_{(1-\mu)\eta}\setminus\mathcal{A}$ with 
    \begin{equation}\label{e:stima1}
    \sum_{i=1,\dots,N}\lVert \tilde w_i-w_i \rVert_{C^{\ell,\alpha}} < \mu\eta/2.    
    \end{equation}
    %and for every open set $V \subset \R^{m+n}$ and for every embedded $m$-dimensional manifold $\mathcal{N}\subset\R^{m+n}$ of class $C^{h,\beta}$ with $\partial \mathcal{N}\cap V=\emptyset$ we have
    %\begin{equation}\label{e:not_in_M}
     %   \mbox{gr}(\tilde w\res \Omega')\cap U \nsubseteq \mathcal{N}.
    %\end{equation}
   Now define $b_\mu:= \Psi(w_1,\dots,w_N)$ and $\tilde b:= \Psi(\tilde w_1,\dots,\tilde w_N)$ and observe that by \eqref{e:stima1}
   \begin{equation}\label{e:stima4}
       d(\tilde b,b)\leq d(\tilde b,b_\mu)+d(b_\mu,b)\leq\mu\eta/2+\mu\eta=3\mu\eta/2.
   \end{equation} Moreover, for every $T\in\AMC(\tilde b)$ there exists an open and dense subset of supp$(\tilde b)$ which points have density $1/2$ for $T$. Indeed, fix such a current $T$ and observe that for every $i=1,\dots, N$ \cite[Theorem 1.6]{Borda} implies that $\Phii_i(gr(\tilde{w_i}\res \Omega'_i))$ contains at least one regular boundary point $q_i$ for $T$. On the other hand, by the same argument used in the proof of Theorem \ref{t:main}, $q_i$ cannot be two-sided and therefore it has density $1/2$. Hence \cite[Theorem 2.1]{Borda} implies that all points in the open dense set of regular boundary points for $T$ in the connected component of supp$(\tilde b)$, which contains $q_i$ have density $1/2$.\\ 
   %with $p_i=\Phii_i(x_i,\tilde w_i(x_i))$ for some $x_i\in\Omega'_i$.

%\pagebreak
   Now we construct a boundary $\hat b$ which is a local perturbation of $\tilde b$ around every $q_i$, with the property that $\AMC(\hat b)$ is a singleton. From now on we we work in every $\Omega_i$ separately and drop the index $i$. Without loss of generality, and up to choosing a subset
   of $U$, we can assume that the diffeomorphism $\Phii:U\to\Phii(U)\subseteq\mathcal{M}$ is of the form \[ \Phii(z) = (z, \Phi(z)) \in \R^{d} \qquad \text{ for } z\in U \subset \R^{n+m}, \]
   with $\Phi: U \to \R^{d-m-n}$ of class $C^{h,\beta}$.
   Moreover up to rotation and translation, we can assume that
   \begin{itemize}
       \item $\displaystyle (0, \Phi(0))=q$ and $D\Phi(0)=0$,
       %\item $\displaystyle \tilde w(x') = (x', v(x'), \Phi(x', \tilde w_1(x')))$ for all $x' \in \Omega'$ and for some $C^{3, \alpha}$ map $v: \Omega' \to \R^{n+1}$ with $v(0)=0$ and $Dv(0)=0$,
       \item { $T$ admits a nice parametrization over a domain $\Lambda$. More precisely,} there exist $r>0$, an open set $\Lambda\subset B(0,r) \subset \R^{m}$ containing the origin and a $C^{h, \beta}$ function $F: \Lambda \to \R^n$ with $F(0)=0$ and $DF(0)=0$ such that
       \begin{equation}
           \begin{split}
              \displaystyle &\supp(T) \cap C_{\Lambda}\cap B^d(q,r)=\\
               &\left\{\big(x', x_m, F(x', x_m), \Phi(x', x_m, F(x', x_m)) \big): x' \in \Omega', x_m > \tilde w_1(x') \text{ with } (x',x_m) \in \Lambda \right\}, 
           \end{split}
       \end{equation}

 where $B^d(q,r)$ denotes the $d$-dimensional ball, $C_\Lambda \subset \R^d$ the cylinder above $\Lambda$ and $\tilde w_1$ the first component of $\tilde w$.
   \end{itemize}
     Now consider a non-zero, smooth bump function $\rho: \R^{m-1} \to [0, \infty)$ with $\supp(\rho) \subset \Lambda\cap\Omega'$ and $$\lVert \rho \rVert_{C^{\ell, \alpha}} < \frac{\mu \eta}{4(1+ \lVert F \rVert_{C^{\ell,\alpha}})}.$$ 
     Define $v: \Omega \to \R^{1+n} $ by
     \[ v(x')= (\tilde w_1(x') + \rho(x'), F(x', \tilde w_1(x') + \rho(x'))) \]
     and observe that denoting $\hat b:=\Psi(v)$ we have \begin{equation}\label{e:stima2}
         d(\tilde b, \hat b)<\mu\eta/2 
     \end{equation} and that that $\Phii(gr(v)) \subset \supp(T)$. Combining \eqref{e:stima2} and the fact that $\tilde b\in\mathcal{B}_{(1-\mu/2)\eta}$ we deduce that $\hat b\in\mathcal{B}_\eta$. Moreover by \eqref{e:stima4} and \eqref{e:stima2} we also have 
     \begin{equation}\label{e:stima3}
         d(b,\hat b)<2\mu\eta.
     \end{equation}
    Define the current  
    \begin{align*}
        T':= T \res  \left\{(x', x_m, F(x',x_m), y) \in \R^d: x' \in \Lambda\cap\Omega',\,  \tilde w_1(x') < x_m < v_1(x') \right\}.
    \end{align*}
   We repeat this construction in every connected component on $b$ and call the resulting current $T'_i$.
   Consider the current $$\hat T:=T-\sum_{i=1}^N T'_i.$$ Since $T\in\AMC(\tilde b)$, it follows that $\hat T\in\AMC(\hat b)$. Indeed assuming by contradiction that $S\in\AMC(\hat b)$ satisfies $\Mass(S)<\Mass(\hat T)$, we obtain that $\partial (S+\sum_i T'_i)=\tilde b$ and moreover, as $\supp(T'_i)$ are disjoint and $T'_j$ is the restriction of $T$ to a set, we have 
   $$\Mass\Big(S+\sum_i T'_i \Big)\leq \Mass(S)+\sum_i\Mass(T'_i)
   <\Mass(\hat T)+\sum_i \Mass(T'_i)=\Mass(T),$$ 
    which contradicts the minimality of $T$. We claim that $\hat T$ is the unique element of $\AMC(\hat b)$. The validity of the claim concludes the proof due to \eqref{e:stima3} and the arbitrariness of $\mu$.\\

   %If it contained a current $T$ with a two-sided boundary point, it would also contain a regular two-sided boundary point $p$ for which we would contradict \eqref{e:not_in_M} with $\mathcal N= \supp(T) \res B(p, \sigma)$. Thus all minimizers in $\AMC(\tilde b)$  must be one-sided at their regular boundary points (which exist by \cite{Borda}). \textcolor{red}{DO THEY?} Denote by $T$ one of those minimizers. Then 
    %\[ \tilde{\tilde b} := \partial (T-\text{bump})  \]
    %has the desired uniqueness property and moreover, $d(b, \tilde{\tilde b}) < 2\mu$.

    %Indeed, assume there is another $S_\eps \in \mathscr I_m(\mathcal{M})$ with $\partial S_\eps = \llbracket \Gamma_\eps\rrbracket$, but $\Mass(S_\eps) < \Mass(T_\eps)$. Then  $S:= S_\eps +T \res \{u(x): x =(x', x_n) \in B^n(0,r), 0< x_n < \rho(x') \}$ has boundary $\Gamma$, but
   % \begin{align*}
%        \Mass (S) &\leq \Mass(S_\eps) + \Mass\left(T \res \{u(x): x =(x', x_n) \in B^n(0,r), 0< x_n < \rho(x') \} \right)\\
%        &<\Mass(T_\eps) + \Mass\left(T \res \{u(x): x =(x', x_n) \in B^n(0,r), 0< x_n < \rho(x') \} \right)\\
%        &= \Mass(T),
%    \end{align*}
%    contradicting the minimality of $T$.

We show the validity of the claim following  \cite{Morganindiana}. Assume by contradiction that there exists a current $\hat S\in\AMC(\hat b)$ with $\hat S\neq\hat T$. Define $S:=\hat S+ \sum_i T'_i$. By interior regularity, see \cite{DeLellisSpadaro1, DeLellisSpadaro2, DeLellisSpadaro3}, there exists a point $q^i \in\Regi(T)\cap\Regi(S)\cap\supp (\hat b)\setminus \supp(\tilde b)$ in every connected component of $\supp (\hat b)\setminus \supp(\tilde b)$. Since both $T$ and $S$ are smooth minimal surfaces in a neighbourhood of $q^i$ which coincide on $\text{supp}(T'_i)$, the unique continuation principle of \cite[Lemma 7.2]{Morganarma} implies that there exists a neighbourhood of $q^i$ where $\text{supp}(S) = \text{supp}(T).$ By \cite[Theorem 2.1]{Borda} we know that in every connected component of $\supp(\hat T)\setminus \supp(\partial \hat T)$ and of $\supp(\hat S)\setminus \supp(\partial \hat S)$ respectively, the sets of interior regular points $\Regi(\hat T)$ and $\Regi(\hat S)$ are connected. Moreover each of these connected components touch (at least) one of the $q^i$. We therefore conclude that $\supp(\hat S) = \supp(\hat T)$. 

Since the points of $\text{supp}(\hat b) \cap  B^d(q,r)$ are one-sided for $\hat T$, then the multiplicity (and the orientation) of $\hat S$ coincides with that of $\hat T$, which concludes the proof of the claim and of Proposition \ref{p:densityfinale}.
\end{proof}

%Let $U$ be a tubular neighborhood of $\text{supp}(\tilde{\tilde b})$ and $\pi: U \rightarrow \text{supp}(\tilde{\tilde b})$ a retraction map. By \cite[\S 4.1.15]{Federer1996GeometricTheory} and the Constancy lemma, see \cite[\S 4.1.7]{Federer1996GeometricTheory}, we obtain that $\partial R = \llbracket \tilde{\tilde b}\rrbracket$, so that $R=T'$. Again by the Constancy lemma we conclude that $T'$ has constant multiplicity equal to 1 almost everywhere. We repeat this for every connected component $\gimel_i$.

\begin{proof}[Proof of Theorem \ref{c:main}]
By Proposition \ref{p:densityfinale}, $\mathcal{B}_\eta\setminus NU$ is dense in $\mathcal{B}_\eta$ and by Lemma \ref{l:denseimpliesresidual} it is also residual.
\end{proof}

\begin{osservazione}
    The validity of Theorem \ref{c:main} can be extended to the case in which $\mathcal{B}_\eta$ is replaced by the corresponding space of boundaries of class $C^{\infty}$, where ${\mathbf{X}}_\eta$ is endowed with the classical metric inducing the smooth convergence. The argument for the proof differs only in the final steps. We begin the construction of the boundary $\hat b$ starting from $\tilde b:=b_\mu$ and observe that $\tilde b$ could be an element of $\Psi(\mathcal{A})$, so that the regular boundary points of $T\in\AMC(\tilde b)$ in a fixed connected component of supp$(\tilde b)$ might fail to have density 1/2. Let us fix a point $q$ in a connected component of $\text{supp}(\tilde b)$. 
    
    By \cite[Theorem 3.2]{BonDelPas} the current $T$ can be written as a sum of two area-minimizing currents $T_1+T_2$, with $$\Mass(T)=\Mass(T_1)+\Mass(T_2)\quad \mbox{and}\quad \Mass(\partial T)=\Mass(\partial T_1)+\Mass(\partial T_2),$$ where $T_1$ is multiplicity-one and $q\not\in$ supp$(\partial T_2)$. Since $T_1$ has the same local structure which the current $T$ has in the proof of Theorem \ref{c:main}, with same argument we can define a boundary $\hat b_1$ pushing the connected component of $\tilde b$ containing $q$ inside the support of $T_1$ through a map $\varphi$ and prove that the corresponding current $R:=\hat T_1+T_2$ is a minimizer for $\hat b_1$. 
    Now consider any area-minimizing $S\neq R$ with $\partial S=\hat b_1$. 
    Again by \cite[Theorem 3.2]{BonDelPas} the current $S$ can be written as a sum of two area-minimizing currents $S_1+S_2$, with $$\Mass(S)=\Mass(S_1)+\Mass(S_2)\quad \mbox{and}\quad \Mass(\partial S)=\Mass(\partial S_1)+\Mass(\partial S_2),$$ where $S_1$ is multiplicity-one and $\varphi(q)\not\in$ supp$(\partial S_2)$.
    The unique continuation argument used in the proof of Theorem \ref{c:main} guarantees that $\hat T_1=S_1$.  

    Now define $\check b_1 :=\hat b_1-\partial T_2$, pick a connected component of the boundary $\partial T_2$ and iterate the procedure obtaining a new boundary $\check b_2$. Since such number of connected components is strictly decreasing along the iteration, there exists $M\in\N$ such that $\hat b_{M+1}=0$. Obviously there is a unique area-minimizing current with boundary $\hat b:=\check b_1+ \dots +\, \check b_M$.
    \end{osservazione}

%{  Assume we do the same construction for any boundary in $b \in \mathfrak X_\eta$. Take any minimizer $T \in \AMC(b)$, at every connected component of $\supp(b)$, find a regular point of $T$ and do the bump, call $T'$ the current $T- bumps$. Observe that for every $S' \in \AMC(\partial T')$, $S'+bumps$ is minimizing for $b$. Hence, it is enough to find all minimizers for $b$ which contain the bumps, in order to find all minimizers for $b'$. Thus define the set
%\[ \Delta := \{x \in \mathcal M: x\in \supp(S) \text{ for some } S \in \AMC(\partial T)
%\text{ with bumps} \in \supp(S)\}.\]
%Then $\Delta \setminus \supp(b)$ has finitely many components $\Sigma_i$ and every such minimizer $S$ is given by
%\[ S= \sum_i Q_i \llbracket \Sigma_i \rrbracket. \]
%Moreover the orientation of the boundary $b$ gives constraints on the multiplicities $Q_i$. In fact it gives a linear system
%\[ Q_i - Q_j = \sigma_{ij}\]
%where $\sigma_{ij}=0$ if  $\partial \Sigma_i \cap \partial \Sigma_j = \emptyset$ and $\{\pm1\}$ otherwise. This linear system has up to homogeneous solutions (i.e. when $\sigma_{ij}=0$ for all $i,j$) a unique solution. Moreover the minimal multiplicity is given at fixed $\Sigma_i$. This means for every $S \in \AMC(\partial T)
%\text{ with bumps} \in \supp(S) $, the one-sided points are uniquely determined by the boundary. By connectedness this determines the entire current $S$ and we can deduce $S=T$.}

{ 
\begin{osservazione}[Other metrics]
    For the typical boundary $b\in\mathcal{B}_\eta$, the conclusions of Theorem \ref{t:main}, Corollary \ref{t:codim1}, and Theorem \ref{c:main} hold for area-minimizing currents with respect to any metric $m$, provided $\mathcal{M}$ and $m$ are sufficiently regular. For instance, if $\mathcal{M}$ is of class $C^{h+1}$ and $\mathcal{M}$ endowed with the metric $m$ embeds isometrically in some Euclidean space as Lipschitz neighbourhood retract of class $C^{h+1}$, then the diffeomorphism between the two manifolds which is obtained composing one isometric embedding with the inverse of the other preserves $C^{h, \beta}$-submanifolds, and therefore all our arguments of Section \ref{s:main_proofs} apply verbatim.

\end{osservazione}
}

\section{Generic uniqueness with respect to the flat norm}\label{s:flat}

In this section we prove Theorem \ref{t:genericga}. We begin with the following

\begin{lemma}\label{l3:closed}
The space $(\mathcal{R}_C, d_{\,\flat})$ is a nontrivial complete metric space.
\end{lemma}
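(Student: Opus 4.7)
The plan is to handle nontriviality by an explicit example, and to prove completeness in three moves: extract a flat limit via Federer--Fleming compactness, control its mass by lower semicontinuity, and then use the convexity of $K$ via a cone construction to guarantee that the limit is again a boundary.

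For nontriviality, note first that $0 \in \mathcal{R}_C$. Moreover, because $K$ is compact and convex with nonempty interior, I can inscribe in $\Int K$ an $m$-simplex $\sigma$ with diameter so small that $\Mass(\partial\a{\sigma})\le C$; then $\partial\a{\sigma}\in\mathscr{I}_{m-1}(K)$ and it is a boundary by definition, so it lies in $\mathcal{R}_C\setminus\{0\}$.

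For completeness, let $(b_j)\subset \mathcal{R}_C$ be a $d_{\,\flat}$-Cauchy sequence. Since each $b_j=\partial T_j$ for some $T_j\in\mathscr{D}_m(K)$, we have $\partial b_j=0$, and by assumption $\Mass(b_j)\le C$ uniformly. Hence the sequence satisfies a uniform bound $\Mass(b_j)+\Mass(\partial b_j)\le C$, so the Federer--Fleming compactness theorem for integral currents (see \cite[\S 4.2.17]{Federer1996GeometricTheory}) produces an integral current $b\in\mathscr{I}_{m-1}(K)$ and a subsequence with $\Flat(b_{j_k}-b)\to 0$. The Cauchy property then propagates this to the full sequence: $d_{\,\flat}(b_j,b)\to 0$. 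Lower semicontinuity of the mass under flat convergence yields
\begin{equation*}
\Mass(b)\le \liminf_{j\to\infty}\Mass(b_j)\le C.
\end{equation*}

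It remains to show $b\in\mathscr{B}_{m-1}(K)$. Since the boundary operator is continuous with respect to the flat norm, $\partial b=\lim_{j\to\infty}\partial b_j=0$. Pick any $p\in\Int K$ and consider the affine homotopy $h:[0,1]\times K\to K$, $h(t,x):=(1-t)p+tx$, which maps into $K$ by convexity. Define the cone $p*b:=h_\sharp(\a{[0,1]}\times b)\in\mathscr{I}_m(K)$; by the homotopy formula \cite[\S 4.1.9]{Federer1996GeometricTheory},
\begin{equation*}
\partial(p*b)=h_\sharp\bigl(\partial\a{[0,1]}\times b-\a{[0,1]}\times\partial b\bigr)=b - p*\partial b = b,
\end{equation*}
because $\partial b=0$. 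Hence $b=\partial(p*b)\in\mathscr{B}_{m-1}(K)$, so $b\in\mathcal{R}_C$ and $(\mathcal{R}_C,d_{\,\flat})$ is complete.

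The only delicate point is the very last step: merely knowing $b$ is a flat limit of boundaries does not a priori force $b$ itself to be a boundary, so one must verify that $\mathscr{B}_{m-1}(K)$ is flat-closed inside $\mathscr{I}_{m-1}(K)$. This is exactly where the assumption that $K$ is convex (with nonempty interior) is used in an essential way, via the cone construction above.
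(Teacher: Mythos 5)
Your proof is correct, but it diverges from the paper's at the crucial step of showing that the flat limit $b$ is again a boundary in $K$. The paper picks a minimizer $T_j\in\AMC(b_j)$ for each $j$, bounds $\sup_j\Mass(T_j)$ via the isoperimetric inequality \cite[\S 4.2.10]{Federer1996GeometricTheory}, runs the Federer--Fleming compactness theorem a second time to extract a flat limit $T$, and concludes $\partial T=b$ by continuity of the boundary operator. You instead observe that $\partial b_j=0$ passes to the limit directly and then fill $b$ explicitly with the cone $p*b$ over a point $p\in K$, which lands in $\mathscr{I}_m(K)$ because $K$ is convex. Your route is more elementary --- it bypasses the isoperimetric inequality and the second compactness application entirely --- and it makes the role of convexity fully explicit, whereas the paper's version relies tacitly on the convex hull property of area-minimizing currents to keep $\supp(T)\subset K$ (needed both for the compactness argument and to conclude $b\in\mathscr{B}_{m-1}(K)$). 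You also supply the nontriviality part, which the paper's proof leaves implicit. One small point worth flagging: the cone computation uses that the push-forward of an $(m-1)$-current under the constant map $h_0\equiv p$ vanishes, which needs $m-1\ge 1$; this is guaranteed by the standing hypothesis $m>1$ of Section \ref{s:flat}, but deserves a mention.
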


\begin{proof}
It is sufficient to prove that $\mathcal{R}_C$ is closed, then completeness follows from \cite[\S 4.2.17]{Federer1996GeometricTheory}.
Let $b_j$ be a sequence of elements of $\mathcal{R}_C$ and let $b$ be such that $\mathbb{F}(b_j-b)\to 0$. By the lower semicontinuity of the mass, we have $\Mass(b)\leq C$. For any $j\in\N$, let $T_j \in \AMC(b_j)$. By the isoperimetric inequality, see \cite[\S 4.2.10]{Federer1996GeometricTheory}, we have $\sup\{\mathbb{M}(T_j)\}<\infty$. By \cite[\S 4.2.17]{Federer1996GeometricTheory}, there exists $T\in{\mathscr{I}}_{m}(\R^{m+n})$ such that, up to (non relabeled) subsequences $\mathbb{F}(T_j-T)\to 0$. By the continuity of the boundary operator we have $\partial T=b$ and hence $b\in \mathcal{R}_C$. 
\end{proof}

In analogy with \eqref{d:nonuniqueminimizers}, we consider the following subset of $\mathcal{R}_C$:
$$\mathcal{N}\mathcal{U}_C:=\{b\in \mathcal{R}_C: \text{ there exist } T^1, T^2 \in\AMC(b) \text{ such that } T^1\neq T^2\}.$$ The following lemma is the counterpart of Lemma \ref{l:denseimpliesresidual} for the flat norm.

\begin{lemma}\label{l:flatdenseimpliesresidual}
Assume that the set $\mathcal{R}_C\setminus \mathcal{N}\mathcal{U}_C$ is dense in $\mathcal{R}_C$. Then it is residual.
\end{lemma}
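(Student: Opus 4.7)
The plan is to mirror the proof of Lemma \ref{l:denseimpliesresidual} with the flat norm replacing $d$. For every $\m\in\N\setminus\{0\}$, I would introduce
\begin{equation*}
\mathcal{N}\mathcal{U}_{C,\m}:=\{b\in\mathcal{R}_C:\text{there exist }T^1,T^2\in\AMC(b)\text{ with }\mathbb{F}(T^1-T^2)\geq\m^{-1}\},
\end{equation*}
so that $\mathcal{N}\mathcal{U}_C=\bigcup_\m\mathcal{N}\mathcal{U}_{C,\m}$. Since each $\mathcal{N}\mathcal{U}_{C,\m}$ is contained in $\mathcal{N}\mathcal{U}_C$, its complement in $\mathcal{R}_C$ contains the (by hypothesis dense) set $\mathcal{R}_C\setminus\mathcal{N}\mathcal{U}_C$, so each $\mathcal{N}\mathcal{U}_{C,\m}$ has empty interior. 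The whole matter thus reduces to proving that every $\mathcal{N}\mathcal{U}_{C,\m}$ is closed in $(\mathcal{R}_C,d_{\,\flat})$.

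To this end, I would fix $\m$ and take $b_j\in\mathcal{N}\mathcal{U}_{C,\m}$ with $\mathbb{F}(b_j-b)\to 0$, together with $T_j^1,T_j^2\in\AMC(b_j)$ satisfying $\mathbb{F}(T_j^1-T_j^2)\geq\m^{-1}$. Lemma \ref{l3:closed} yields $b\in\mathcal{R}_C$. A uniform mass bound on the $T_j^i$ follows from minimality: since $K$ is compact and convex, the cone construction over any interior point of $K$ produces an integral filling $Z_j\in\mathscr{I}_m(K)$ of $b_j$ whose mass is controlled by $\mathrm{diam}(K)\,\Mass(b_j)\leq C\,\mathrm{diam}(K)$, so that $\Mass(T_j^i)\leq\Mass(Z_j)$ is equibounded. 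The compactness theorem \cite[\S 4.2.17]{Federer1996GeometricTheory} applied inside $K$ then yields flat limits $T^i\in\mathscr{I}_m(K)$ (along a non-relabelled subsequence) and continuity of $\partial$ under flat convergence gives $\partial T^i=b$.

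The decisive step is to verify that $T^i\in\AMC(b)$. From $\mathbb{F}(b_j-b)\to 0$ combined with $\partial(b_j-b)=0$ I would take integral decompositions $b_j-b=R_j+\partial S_j$ with $\Mass(R_j)+\Mass(S_j)\to 0$; since $\partial R_j=0$, the isoperimetric inequality \cite[\S 4.2.10]{Federer1996GeometricTheory} furnishes $V_j\in\mathscr{I}_m(K)$ with $\partial V_j=R_j$ and $\Mass(V_j)\to 0$, so that $U_j:=V_j+S_j$ satisfies $\partial U_j=b_j-b$ and $\Mass(U_j)\to 0$. For any competitor $S\in\mathscr{I}_m(K)$ with $\partial S=b$, the current $S+U_j$ competes against $T_j^i$, giving
\begin{equation*}
\Mass(T_j^i)\leq\Mass(S)+\Mass(U_j).
\end{equation*}
Lower semicontinuity of mass under flat convergence yields $\Mass(T^i)\leq\Mass(S)$, hence $T^i\in\AMC(b)$. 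Continuity of the flat norm preserves the separation $\mathbb{F}(T^1-T^2)\geq\m^{-1}$, so $T^1\neq T^2$ and $b\in\mathcal{N}\mathcal{U}_{C,\m}$.

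I expect the principal difficulty to be precisely the last step: propagating the minimality property to the flat limit demands small-mass integral fillings of $b_j-b$ supported inside $K$, where both the convexity of $K$ (already used to bound $\Mass(T_j^i)$) and the Euclidean isoperimetric inequality are essential. Once every $\mathcal{N}\mathcal{U}_{C,\m}$ is shown to be closed with empty interior, completeness of $(\mathcal{R}_C,d_{\,\flat})$ from Lemma \ref{l3:closed} together with the Baire category theorem concludes that $\mathcal{R}_C\setminus\mathcal{N}\mathcal{U}_C$ is residual.
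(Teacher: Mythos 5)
Your argument is correct and mirrors the paper's proof step for step: defining the sets $\mathcal{N}\mathcal{U}_C^\m$, observing that they have empty interior by the density hypothesis, passing to flat limits via the compactness theorem, and concluding that these limits remain area minimizers. The only real variation is that where the paper simply cites \cite[Theorem 34.5]{Simon1983LecturesTheory} for the stability of area-minimality under flat convergence, you reprove that stability from scratch by constructing small-mass integral fillings $U_j$ of $b_j-b$ via the isoperimetric inequality and testing against a fixed competitor $S$; this is a correct, if slightly redundant, way to supply the detail behind the citation.
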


\begin{proof}
For $\m \in \N \setminus \{0\}$, consider the sets 
$$\mathcal{N}\mathcal{U}_C^\m:=\{b\in \mathcal{R}_C: \text{ there exist } T^1, T^2 \in\AMC(b) \text{ with } \mathbb{F}(T^2-T^1)\geq \m^{-1}\}.$$
%Since $\mathcal{N}\mathcal{U}_C^\m\subset \mathcal{N}\mathcal{U}_C$, then $(\mathcal{R}_C\setminus \mathcal{N}\mathcal{U}_C^\m)\supset(\mathcal{R}_C\setminus \mathcal{N}\mathcal{U}_C)$ and hence, by assumption, $\mathcal{R}_C\setminus \mathcal{N}\mathcal{U}_C^\m$ is $\mathbb{F}_K$-dense in $\mathcal{R}_C$ for every $\m$. Therefore $\mathcal{N}\mathcal{U}_C^\m$ has empty interior in $\mathcal{R}_C$ for every $\m$.
It suffices to prove that $\mathcal{N}\mathcal{U}_C^\m$ is closed for every $\m$. Consider a sequence $b_j$ of elements of $\mathcal{N}\mathcal{U}_C^\m$ and let $b\in \mathcal{R}_C$ be such that $\mathbb{F}(b_j-b)\to 0$. For every $j\in\N$, take $T^1_j,T^2_j \in \AMC(b_j)$ with $\mathbb{F}(T^2_j-T^1_j)\geq 1/\m.$ As in the proof of Lemma \ref{l3:closed}, we deduce that there exist $T^1,T^2\in{\mathscr{I}}_{m}(\R^{m+n})$ such that $\partial T^1=\partial T^2=b$ and, up to (non relabeled) subsequences, $\mathbb{F}(T^1_j-T^1)\to 0$, $\mathbb{F}(T^2_j-T^2)\to 0$ and $\mathbb{F}(T^2-T^1)\geq 1/\m$. By \cite[Theorem 34.5]{Simon1983LecturesTheory}, we have $T_1, T_2 \in \AMC(b)$, hence $b\in \mathcal{N}\mathcal{U}_C^\m$.
\end{proof}

To prove Theorem \ref{t:genericga} we are left to show that the set of boundaries $b \in \mathcal{R}_C$ for which $\AMC(b)$ is a singleton is dense in the metric space $(\mathcal{R}_C,d_{\,\flat}).$ The proof can be roughly summarized as follows: firstly, we approximate $b \in \mathcal{R}_C$ with an integer polyhedral boundary $b_P\in\mathcal{R}_{C-\delta}$, for some $\delta>0$, see Lemma \ref{l:sharppolyhedral}. Then, we fix $S \in \AMC(b_P)$ and for every connected component of $\Regi(S)$ there exists, by \cite{DeLellisSpadaro1, DeLellisSpadaro2, DeLellisSpadaro3}, an interior regular point $x_i$. We define the current $b':=\partial(S-S\res \bigcup_i B(x_i,r_i))$ where $r_i$ are suitably small radii, so that $b'\in\mathcal{R}_C$ and $\Flat(b-b')$ is small. An argument similar to that used in Proposition \ref{p:densityfinale} proves that $\AMC(b')$ is a singleton. 

\begin{lemma}\label{l:sharppolyhedral}
For any $b \in \mathcal{R}_C$ and $\varepsilon>0$ there exist a $\delta >0$ and $b_P \in \mathcal{R}_{C-\delta}\cap\mathscr{P}_{m-1}(K)$ such that $$\mathbb{F}(b-b_P)\leq \varepsilon.$$
\end{lemma}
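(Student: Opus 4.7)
The plan is to approximate $b$ by first shrinking it slightly toward an interior point of $K$ via a homothety, which decreases its mass by a definite amount and moves its support strictly into $\Int(K)$, and then to apply the classical polyhedral approximation theorem for integral currents to a \emph{primitive} of the shrunken boundary. The boundary of the resulting polyhedral current will automatically be both polyhedral and a boundary in $K$, and the preliminary contraction provides the mass slack needed to beat $C$.

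More precisely, I would fix $x_0\in\Int(K)$ and, for $t\in(0,1)$, consider the homothety $h_t(x):=x_0+(1-t)(x-x_0)$. Since $K$ is convex and $x_0\in\Int(K)$, $h_t(K)\subset\Int(K)$ and, denoting by $r>0$ the radius of a ball around $x_0$ contained in $K$, one has $\dist(h_t(K),\partial K)\geq tr$. Let $T\in\mathscr{I}_m(K)$ be the cone from $x_0$ over $b$, which satisfies $\partial T=b$ and $\supp(T)\subset K$. Setting $T_t:=(h_t)_\sharp T$ and $b_t:=\partial T_t=(h_t)_\sharp b$, I obtain $\Mass(b_t)\leq(1-t)^{m-1}C$ and $\supp(T_t)\subset h_t(K)$. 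A direct homotopy argument analogous to the one used in Lemma \ref{l:flatequivalence} (cf.\ \cite[\S 4.1.9]{Federer1996GeometricTheory}) produces an integral $m$-current $H$ with $\partial H=b_t-b$ and $\Mass(H)\leq c_K\,t\,\Mass(b)$ for some constant $c_K$ depending only on $\mathrm{diam}(K)$, so that $\Flat(b-b_t)\to 0$ as $t\to 0$.

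I would then pick $t>0$ small enough that $(1-t)^{m-1}C<C-2\delta$ for some $\delta>0$ and $\Flat(b-b_t)<\varepsilon/2$, and apply the polyhedral approximation theorem for integral currents (see \cite[4.2.20--4.2.22]{Federer1996GeometricTheory}) to the $m$-current $T_t$: for every $\eta>0$ there exists an integer polyhedral current $T_P$ such that $\Flat(T_t-T_P)<\eta$, $\Mass(\partial T_P)\leq\Mass(\partial T_t)+\eta=\Mass(b_t)+\eta$, and $\supp(T_P)$ is contained in the $\eta$-neighbourhood of $\supp(T_t)$. Setting $b_P:=\partial T_P$, the choice $\eta<\min\{tr,\,\delta,\,\varepsilon/2\}$ yields $b_P\in\mathscr{P}_{m-1}(K)\cap\mathscr{B}_{m-1}(K)$ with $\Mass(b_P)<C-\delta$; moreover, using the elementary estimate $\Flat(\partial Q)\leq\Flat(Q)$ valid for every integral current $Q$ (which follows at once from the decomposition defining the flat norm), I get $\Flat(b_t-b_P)=\Flat(\partial(T_t-T_P))\leq\Flat(T_t-T_P)<\eta$, and the triangle inequality gives $\Flat(b-b_P)<\varepsilon$.

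The main obstacle is the simultaneous enforcement of three conditions on $b_P$: being polyhedral, being a boundary supported in $K$, and having mass \emph{strictly} smaller than $C$. A naive polyhedral approximation of $b$ itself would not obviously be a boundary in $K$ and could have mass exceeding $\Mass(b)$, because of the additive $\eta$-error inherent in the approximation theorem. Approximating instead a primitive $T_t$ of the contracted boundary $b_t$ takes care of the boundary condition automatically, since the boundary of a polyhedral current is polyhedral; the preliminary homothetic contraction creates both the mass slack $(1-t)^{m-1}C<C$ and the support slack $\dist(h_t(K),\partial K)>0$ needed to absorb the $\eta$-error from the polyhedral approximation.
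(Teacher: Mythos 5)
Your argument is correct and follows the same two-step structure as the paper's proof: a preliminary contraction toward an interior point of $K$ (to create both mass slack below $C$ and room between the support and $\partial K$), followed by polyhedral approximation. The paper implements the contraction with a $(1-\varepsilon/(4m))$-Lipschitz map $\phi$ (morally the same homothety you use) and then applies \cite[\S 4.2.21]{Federer1996GeometricTheory} \emph{directly to the shrunk boundary} $\bar b=\phi_\sharp b$, obtaining a polyhedral \emph{cycle} $b_P$ with $\partial b_P=0$ and $\Mass(b_P)\leq(1+\varepsilon/2)\Mass(\bar b)$; the fact that $b_P$ is again a boundary in $K$ then comes for free because $K$ is convex, so one may cone $b_P$ from an interior point. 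You instead approximate an $m$-dimensional \emph{primitive} $T_t$ of $b_t$ and set $b_P:=\partial T_P$, which makes the ``$b_P$ is a polyhedral boundary in $K$'' requirement automatic and avoids having to observe separately that (i) the polyhedral approximation of a cycle can be taken to be a cycle and (ii) cycles in a convex compact set are boundaries. The trade-off is that you must appeal to the part of Federer's theorem that controls $\Mass(\partial T_P)$ in terms of $\Mass(\partial T_t)$, and keep track of the support of $T_P$ rather than of its boundary; the paper's route needs only mass control on the $(m-1)$-dimensional approximant. Your auxiliary observation $\Flat(\partial Q)\leq\Flat(Q)$ (take a near-optimal decomposition $Q=R+\partial S$ and write $\partial Q=0+\partial R$) is correct and is the key to transferring the flat-norm estimate from the primitives to the boundaries. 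Both routes are sound; the paper's is marginally shorter, while yours sidesteps the ``cycle implies boundary'' step.
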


\begin{proof} 
Without loss of generality and up to rescaling, we can assume $C=1$. We consider a map $\phi:K\to K$ { with image contained in the relative interior of $K$}, which is $(1-\varepsilon/(4m))$-Lipschitz and $\|\text{Id}-\phi\|_\infty<\varepsilon/2^m$. Consider $\bar b:=\phi_\sharp b$. Applying the homotopy formula as in \eqref{e:homotopy_estimate}, we obtain that 
\begin{equation}\label{e:approx1} 
    \Flat(b-\bar b)
    \leq 2^{m-1} \|Id-\phi\|_\infty \leq \varepsilon/2
\end{equation} 
and 
$$\Mass(\bar b)
\leq \left(1-\frac{\varepsilon}{4m} \right)^{m-1}\Mass(b)
\leq \left(1-\frac{\varepsilon}{2} \right)\Mass(b).$$
Observe in particular that $\bar b\in\mathcal{R}_{C{ (1-\varepsilon/2)}}$ and moreover supp$(\bar b)$
is contained in the relative interior of $K$. We can thus apply \cite[\S 4.2.21]{Federer1996GeometricTheory} to obtain an integer polyhedral current $b_P$ such that
\begin{equation}\label{e:approx2}
    \Flat(b_P-\bar b)
    \leq \varepsilon/2, 
    \quad \partial b_P=0 \quad \mbox{and}\quad 
    \Mass(b_P)\leq(1+\varepsilon/2)\Mass(\bar b),
\end{equation}
deducing from \eqref{e:approx1} and \eqref{e:approx2} that $$\Flat(b_P- b)\leq \varepsilon \quad \mbox{and}\quad \Mass(b_P)\leq(1-\varepsilon^2/4)\Mass(b).$$
In particular $b_P$ satisfies the requirement of the lemma for { $\delta=C\varepsilon^2/4$}.

\end{proof}

%Recall the definition of interior regular point: we say that $x_0 \in \operatorname{supp}(S) \setminus \operatorname{supp}(\partial S)$ is an interior regular point if there is a positive radius $\overline{r}>0$ and a ball $B_{\overline{r}}(x_0) \subset \R^{n+m}$, a smooth embedded submanifold $\Sigma \subset \R^{n+m}$ and a positive integer $Q$ such that $T \res B_{\overline{r}}(x_0)=Q \llbracket \Sigma \rrbracket$. 

\begin{proof}[Proof of Theorem \ref{t:genericga}]
Fix $\varepsilon>0$, $b\in\mathcal{R}_C$ and $b_P$ as in Lemma \ref{l:sharppolyhedral} and consider $S \in \AMC(b_P)$. It is sufficient to prove the theorem assuming $\Regi(S)$ is connected, indeed the same argument can be applied to each connected component of $\Regi(S)$.

Let $x_0 \in \Regi(S)$, so that there exists a positive radius $\overline{r}>0$, a smooth embedded submanifold $\Sigma \subset \R^{m+n}$ and a positive integer $Q$ such that $S \res B(x_0,\overline{r})=Q \llbracket \Sigma \rrbracket$. Fix some positive radius $r$ such that $r < \overline{r}$ and define $$S':= S - S \res B(x_0,r)  \,\, \text{ and }\,\, b':= \partial S'.$$ Note that, since $b_{ P}\in\mathcal{R}_{C-\delta}$, then for $r$ sufficiently small $b' \in \mathcal{R}_C$. Note further that $S' \in \AMC(b')$, which can be proved by the same argument used in the proof of Proposition \ref{p:densityfinale}. Hence we only need to show that $\AMC(b') = \{S'\}$. 

Suppose there exists $S'' \in \AMC(b')$ such that $S'\neq S''$ and denote $\hat S:=S''+ S \res B(x_0,r)$. Observe that since 
$$\mathbb M (S) \leq \mathbb M (\hat S) \leq \mathbb M (S'')+\mathbb M ( S \res B(x_0,r)) \leq \mathbb M (S),$$
then $\hat S \in \AMC(b_P)$.
By the minimality of $S$ one immediately sees that $\text{supp}(\hat S)\supset \text{supp} (S)\cap B(x_0,r)$. By interior regularity, there exists $x_1 \in \partial B(x_0,r) \cap \Regi(S) \cap \Regi(\hat S)$. For a sufficiently small radius $\rho$ we can write $$S\res B(x_1,\rho)= Q_1\llbracket \Sigma_1 \rrbracket \res B(x_1,\rho) \, \text{ and } \, \hat S\res B(x_1,\rho)= Q_2\llbracket \Sigma_2 \rrbracket \res B(x_1,\rho).$$ By the same argument of Lemma \ref{p:densityfinale}, the two submanifolds $\Sigma_1, \Sigma_2$ must coincide locally around $x_1$. Since by \cite[Theorem 2.1]{Borda} $\Regi(S')$ and $\Regi(S'')$ are connected, unique continuation implies that $\Regi(S')=\Regi(S'')$. Since all points of $\partial B(x_0,r)$ have density $Q/2$, then the multiplicity (and the orientation) of $S'$ coincides with that of $S''$, contradicting $S'\neq S''$. This proves that the set of boundaries $b \in \mathcal{R}_C$ for which $\AMC(b)$ is a singleton is dense in $(\mathcal{R}_C,d_{\,\flat})$ and hence, by Lemma \ref{l:flatdenseimpliesresidual}, we conclude the proof of Theorem \ref{t:genericga}.
\end{proof}

The preliminary approximation of Lemma \ref{l:sharppolyhedral} is motivated by the following remark. 

\begin{osservazione}\label{r:emptythm}
Given an integral current $b \in \mathscr{B}_{m-1}(K) \cap \mathscr{I}_{m-1}(K)$ and $S \in \AMC(b)$, it is not possible to conclude $\Regi(S) \neq \emptyset$. { Indeed, it could happen that $\textrm{supp}(b)$ = \textrm{supp}(S),} as the following example shows.
Consider a sequence of positive real numbers $r_j$ such that $\sum_jr_j < \infty$ and a sequence $q_j$ that is dense in $B(0,1)\subset\R^2$. Denote $\rho_j:=\min\{r_j,1-|q_j|\}$, and consider the balls $B(q_j, \rho_j)$ and the $2$-dimensional current defined as $$T:= \sum_{j \in \N}\llbracket B(q_j, \rho_j)\rrbracket.$$ Note $T$ is well defined and has finite mass because $\sum_{j \in \N}\Mass(\llbracket B(q_j, \rho_j)\rrbracket)<\infty$ and moreover 
\begin{equation}\label{e:massabordi}
\Mass(\partial T)=\Mass \Big(\sum_{j \in \N}\partial\llbracket B(q_j, \rho_j)\rrbracket \Big)\leq\sum_{j \in \N}\Mass(\partial\llbracket B(q_j, \rho_j)\rrbracket).    
\end{equation}
Hence, by \cite[Theorem 30.3]{Simon1983LecturesTheory}, $T\in\mathscr{I}_{2}(\overline{B(0,1)})$. Moreover, since the intersection between two circumferences with different centers is $\Haus^1$-null, then the inequality in \eqref{e:massabordi} is an equality and therefore $\partial B(q_j,\rho_j) \subset \supp(\partial T)$ for every $j$ implying that supp$(\partial T)=\overline{B(0,1)}$. Now take $S\in\AMC(\partial T)$. Since $\text{supp}(S)\subset \text{conv} (\text{supp}(\partial T))$, we deduce that $\Regi(S)\subset\text{supp}(S)\setminus\text{supp}(\partial T) =\emptyset$.
\end{osservazione}

%Moreover one can easily check that $$\mathbb{M}(T)= \pi \sum_j \rho_j^2 <\infty \,\text{ and }\, \mathbb{M}(\partial T)= 2\pi \sum_j \rho_j<\infty.$$ Fix a point $x \in \R^2$ and a positive radius $\varepsilon >0$, then there exists $j_0$ such that $q_{j_0} \in B(x,\varepsilon)$ and $\rho_{j_0} < \text{dist}(q_{j_0}, \partial B(x,\varepsilon)).$ Hence, $\text{supp}(\partial T) \cap B(x,\varepsilon) \neq \emptyset$, showing that $\text{supp}(\partial T)=\R^2$. Since $\partial T$ is integral, then there exists an area-minimizing current $T_1$ such that $\partial T_1 = \partial T$ but such that $$\Regi(T_1)\subset \text{supp}(T_1) \setminus \text{supp}(\partial T)= \emptyset.$$ The same argument can be easily generalized in any dimension. 

%\pagebreak

\printbibliography

\end{document}